\newcommand{\C}{\mathbb{C}}
\newcommand{\D}{\mathbb{D}}
\newcommand{\EE}{\mathbb{E}}
\newcommand{\F}{\mathcal{F}}
\newcommand{\M}{\mathcal{M}}
\newcommand{\N}{\mathbb{N}}
\newcommand{\PP}{\mathbb{P}}
\newcommand{\R}{\mathbb{R}}
\newcommand{\T}{\mathbb{T}}
\newcommand{\Z}{\mathbb{Z}}
\let\div\relax
\DeclareMathOperator{\div}{div}
\renewcommand{\epsilon}{\varepsilon}
\newcommand{\one}{\bm{1}}
\DeclareMathOperator{\imm}{i}
\newcommand{\de}{\partial}
\newcommand\restr[2]{{
  \left.\kern-\nulldelimiterspace
  #1
  \vphantom{\big|}
  \right|_{#2}
  }}
\newcommand{\set}[1]{\left\{#1\right\}}
\newcommand{\pa}[1]{\left(#1\right)}
\newcommand{\bra}[1]{\left[#1\right]}
\newcommand{\abs}[1]{\left|#1\right|}
\newcommand{\norm}[1]{\left\|#1\right\|}
\newcommand{\brak}[1]{\left\langle#1\right\rangle}
\newcommand{\expt}[1]{\mathbb{E}\left[#1\right]}
\title{Stationary Solutions of Damped Stochastic\\ 2-dimensional Euler's Equation}
\author{Francesco Grotto\\ Scuola Normale Superiore
	 \thanks{email address: francesco.grotto@sns.it}}
\date{20 January 2019}
\begin{document}

\newcommand{\secautorefname}{Section}

\newtheorem{thm}{Theorem}
\newcommand{\thmautorefname}{Theorem}
\newtheorem{defi}{Definition}
\newcommand{\defiautorefname}{Definition}
\newtheorem{cor}{Corollary}
\newcommand{\corautorefname}{Corollary}
\newtheorem{lem}{Lemma}
\newcommand{\lemautorefname}{Lemma}
\newtheorem{prop}{Proposition}
\newcommand{\propautorefname}{Proposition}

\theoremstyle{remark}
\newtheorem{rmk}{Remark}
\newcommand{\rmkautorefname}{Remark}
\newtheorem{ex}{Example}

\maketitle

\begin{abstract}
Existence of stationary point vortices solution to the damped and stochastically driven Euler's equation
on the two dimensional torus is proved, by taking limits of solutions with finitely many vortices. 
A central limit scaling is used to show in a similar manner the existence of stationary solutions
with white noise marginals.
\end{abstract}

%%%%%%%%%%%%%%%%%%%%%%%%%%%%%%%%%%%%%%%%%%%%%%%%%%%%%%%%%%%%%%%%%%%%%%%%%%%%%%%%%%%%%%%%%%%
\section{Introduction}

The present work concerns a particular class of solutions to the 2-dimensional incompressible Euler's equation 
with frictional damping, on the torus $\T^2= \R^2/\Z^2$,
\begin{equation}\label{modelvelocity}
\partial_t u_t+ u_t\cdot \nabla u_t+\nabla p_t=-\theta u_t + F_t, \quad \nabla\cdot u_t=0,
\end{equation}
where $u_t$ is the velocity vector field, $p_t$ is the (scalar) pressure, $\theta>0$ 
and $F_t$ is a stochastic forcing term. Our motivation stems from works on 2-dimensional turbulence:
our model can be regarded as an inviscid version of the one considered in \cite{boffettaecke}, which aimed to describe
the energy cascades phenomena in stationary, energy-dissipated, 2-dimensional turbulence.
Inspired by recent renewed theoretical interest for point vortices methods in the study of 2-dimensional Euler's equation
stemming from \cite{flandoli}, we will study solutions to (\ref{modelvelocity}) obtained as systems of 
interacting point vortices, and Gaussian limits of the latter ones. 
Even if our models are not able to capture turbulence phenomena such as the celebrated
energy spectrum decay law of inverse cascade predicted by Kolmogorov, we believe that the mechanism
of creation and damping of point vortices we describe might contribute to provide a description
of experimental behaviours of models such as the ones in \cite{boffettaecke}. Moreover,
the mathematical treatment of measure- or distribution-valued solution to Euler's equation
is not a trivial task, due to the need of quite weak notions of solution in presence of a singular nonlinearity.

From the mathematical viewpoint, equation (\ref{modelvelocity}) has been widely investigated especially as inviscid
limit of driven and damped Navier-Stokes equation, see for instance \cite{bessaihferrario}, \cite{constantin} and references
therein. Aside from the fact that we are dealing directly with the inviscid case, a substantial difference
of this work with respect to those ones is the space regularity of solutions. Indeed, existence and uniqueness
for 2-dimensional Euler equations are well established facts in spaces of suitably regular function spaces,
while the interesting case of solutions taking values in signed measures or distributions
remains quite open, especially in the uniqueness part:
we refer to \cite{majdabertozzi} for a general overview of the theory. The results of \cite{flandoli},
which we review in \autoref{ssec:weaksolutions},
established an important link between the theory of point vortices models and Gaussian invariant measures
to Euler's equation. We refer to \cite{majdabertozzi,marchioro} and to \cite{albeveriocruzeiro} for reviews on,
respectively, the former and latter ones. We also mention that limits of Gibbsian point vortices ensembles
(originally proposed by Onsager, \cite{onsager}) converging to Gaussian invariant measures were already considered for
instance in \cite{bpp} (the similarities between the two being already pointed out by Kraichnan \cite{kraichnan}).
However, Flandoli \cite{flandoli} was the first, as far as we know, to prove convergence of the system evolving
in time, as opposed to the simple convergence of invariant measures of the other ones.
His approach was based on the weak vorticity formulation of \cite{schochetweakvorticity} (see also its references),
which had already been considered in the point vortices model, \cite{schochetpointvortices}, 
and turned out to be suitable to treat solutions with white noise marginals.
Our results generalise the ones of \cite{flandoli} by
combining a stochastic forcing term (already considered in the vortices setting in \cite{flandoligubinellipriola},
or in function spaces in \cite{flandolimaurelli}) and damping.
Stationary solutions are regarded with particular interest in the theory, and the invariant distributions
we consider are also invariants of Euler's equation with no damping of forcing (see \autoref{rmk:nonlinearterm}):
to our knowledge, the Poissonian invariant distributions with infinite vortices we introduce below are new,
while their Gaussian counterpart (the \emph{enstrophy measure}, more generally known as white noise) 
have been an object of interest since the works of Hopf, \cite{hopf}.
For a more general discussion on invariant measures of Lévy type we refer to \cite{albeverioferrariolevy},
in which most of the basic ideas we rely upon are finely presented, although their arguments then
proceed along point of view of Dirichlet forms theory.

We will treat our model equation in vorticity form,
\begin{equation}\label{modelvorticity}
\partial_t \omega_t=-\theta \omega_t +u_t\cdot \nabla \omega_t+ \Pi_t, \quad \omega_t=\nabla^\perp\cdot u_t,
\end{equation} 
where $\nabla^\perp=(\partial_2,-\partial_1)$. The idea is to exhibit solutions by adapting the point
vortices model for Euler's equation, which, in absence of forcing and damping, we recall to be the measure valued solutions 
\begin{equation}\label{eulervortices}
\omega_t=\sum_{i=1}^N \xi_i \delta_{x_{i,t}}, 
\quad \dot x_{i,t}=\sum_{j\neq i} \xi_j \nabla^\perp \Delta^{-1}(x_{i,t},x_{j,t}),
\end{equation}
where $x_i\in\T^2$ is the position and $\xi_i\in\R$ the intensity of a vortex, to Euler's equation
\begin{equation}\label{eulereq}
\begin{cases}
\de_t \omega_t + u_t\cdot \nabla \omega_t =0\\
\nabla^\perp \cdot u_t=\omega_t,
\end{cases}
\end{equation}
(see \autoref{sec:definitions} for the appropriate notion of solution).
Inclusion of the damping term in our model will amount to an exponential quenching of the vortex intensities, with rate $\theta$.
Because of dissipation due to friction (which physically results from the 3-dimensional environment in which the 
2-dimensional flow is embedded), a forcing term is necessary in order for the model to exhibit stationary behaviour.
We will choose as $\Pi_t$ a Poisson point process, so to add new vortices and rekindle the system.
The linear part of (\ref{modelvorticity}), which is a Poissonian Ornstein-Uhlenbeck equation,
suggests that stationary distributions are made of countable vortices with
exponentially decreasing intensity, but in fact dealing with solutions of (\ref{modelvorticity}) having such marginals
seems to be as hard as the white noise marginals case.
The latter will be also addressed, taking as in \cite{flandoli} a ``central limit'' scaling of the vortices model,
resulting in solutions of (\ref{modelvorticity}) with space white noise marginal, and space-time white noise as forcing term. 

Our main result will be the \emph{existence} of solutions to (\ref{modelvorticity}) in these two cases:
infinite vortices marginals and Poisson point process forcing; white noise marginals and space-time white noise forcing.
The latter one draws us closer to the models in \cite{boffettaecke}, where the forcing term
was Gaussian with delta time-correlations. We will apply a compactness method: our approximant processes will not be
approximated solutions (as in Faedo-Galerkin methods), but true point vortices solutions with finitely many
vortices, for which we are able to prove well-posedness thanks to the techniques of \cite{marchioro}.

We regard the following results as a first step in the analysis of equation (\ref{modelvorticity}) by point vortices methods,
the natural prosecution being the study of driving noises with more complicated space correlations,
such as the ones used in numerical simulations reviewed in \cite{boffettaecke}.

\section{Preliminaries and Main Result}\label{sec:definitions}

Consider the the 2-dimensional torus $\T^2= \R^2/\Z^2$; we denote by
$H^\alpha=H^\alpha(\T^2)=W^{\alpha,2}(\T^2)$, for $\alpha\in\R$ the $L^2=L^2(\T^2)$-based Sobolev spaces, which enjoy the compact embeddings $H^\alpha\hookrightarrow H^\beta$ whenever $\beta<\alpha$,
the injections being furthermore Hilbert-Schmitd if $\alpha>\beta+1$.
Sobolev spaces are conveniently represented in terms of Fourier series:
let $e_k(x)=e^{2\pi ik\cdot x}$, $x\in\T^2$, $k\in\Z^2$, be the usual Fourier orthonormal basis: then
\begin{equation}\label{fourierseries}
H^\alpha=\set{u(x)=\sum_{k\in\Z^2}\hat u_k e_k(x): \norm{u}^2_{H^\alpha}= \sum_{k\in\Z^2} (1+|k|^2)^\alpha|\hat u_k|^2<\infty},
\end{equation}
where $\hat u_k=\bar{\hat u}_{-k}\in\C$ (we only consider real spaces).
We denote by $\M=\M(\T^2)$ the space of finite signed measures on $\T^2$: recall that measures have Sobolev regularity
$\M(D)\subset H^{-1-\delta}$ for all $\delta>0$ (for instance, because by dominated convergence 
their Fourier coefficients converge to constants). 
The brackets $\brak{\cdot,\cdot}$ will stand for $L^2$ duality couplings, such as the one between measures and continuous functions,
or between Sobolev spaces of opposite orders, unless we specify otherwise.
The capital letter $C$ will denote (possibly different) constants, and subscripts will point out 
occasional dependences of $C$ on other parameters.
Lastly, we write $X\sim Y$ when the random variables $X,Y$ have the same law.

\subsection{Random Variables}\label{ssec:randomvariables}

In order to lighten notation, in this paragraph we denote random variables (or stochastic processes)
and their laws with the same symbols. Let us also fix $H:=H^{-1-\delta}$, with $\delta>0$, the Sobolev space in which
we embed our random measures and distributions.
We will deal with stochastic objects of Gaussian and Poissonian nature: the former are likely to be the more familiar ones,
so we begin our review with them. We refer to \cite{peszatzabczyk, sato} for a complete discussion of the 
underlying classical theory.

Let $W_t$ be the cylindrical Wiener process on $L^2(\T^2)$, that is $\brak{W_t,f}$ is a real-valued centred Gaussian process
indexed by $t\in[0,\infty)$ and $f\in L^2(\T^2)$ with covariance
\begin{equation}\label{whitenoisecovariance}
\expt{\brak{W_t,f},\brak{W_s,g}}=t\wedge s \brak{f,g}_{L^2(\T^2)}
\end{equation}
for any $t,s\in[0,\infty)$ and $f,g\in L^2(\T^2)$. Since the embedding $L^2(\T^2)\hookrightarrow H^{-1-\delta}(\T^2)$ is
Hilbert-Schmidt, $W_t$ defines a $H^{-1-\delta}$-valued Wiener process. 
The law $\eta$ of $W_1$ is called the \emph{white noise} on $\T^2$, and it can thus be regarded as a Gaussian probability measure
on $H^{-1-\delta}$. Analogously, the law $\zeta$ of the (distributional) time derivative of $W$ can be identified both with a centred
Gaussian process indexed by $L^2([0,\infty)\times\T^2)$ and identity covariance operator or with a centred Gaussian probability
measure on $H^{-3/2-\delta}([0,\infty)\times\T^2)$; $\zeta$ is called the \emph{space-time white noise} on $\T^2$.

The couplings of $\eta$ against $L^2$ functions are called Ito-Wiener integrals: we will see that \emph{double}
Ito-Wiener integrals play a crucial role in this context, so let us recall their definition (for which we refer to \cite{janson}).
The double stochastic integral with respect to $\eta$ is the isometry $I^2:L_{sym}^2(\T^{2\times2})\rightarrow L^2(\eta)$ 
(which is not onto, the image being the second Wiener chaos)
defined extending by density the following expression on symmetric products:
\begin{equation*}
I^2(f\odot g)= :\brak{\eta,f}\brak{\eta,g}:=\brak{\eta,f}\brak{\eta,g}-\brak{f,g} \qquad \forall f,g\in L^2(D),
\end{equation*}
with $f\odot g(x,y)=\frac{f(x)g(y)+f(y)g(x)}{2}$. Equivalently, it is the extension by density of the map
\begin{equation}\label{doubleintegralsimplefunctions}
\sum_{\substack{i_1,i_2=1,\dots,n\\i_1\neq i_2}}a_{i_1,i_2}\one_{A_{i_1}\times A_{i_2}}\mapsto 
\sum_{\substack{i_1,i_2=1,\dots,n\\i_1\neq i_2}}a_{i_1,i_2} \eta(A_{i_1}) \eta(A_{i_2})
\end{equation}
where $n\geq 0$, $A_1,\dots,A_n\subset \T^2$ are disjoint Borel sets and $a_{i,j}\in\R$.
Let us compare it with another notion of double integral: 
considering $\eta$ as a random distribution in $H^{-1-\delta}$, the tensor product $\eta\otimes\eta$ is defined
as a distribution in $H^{-2-2\delta}(\T^{2\times2})$, so for $h\in H^{2+\delta}(\T^{2\times2})$ we can couple $\brak{h,\eta\otimes\eta}$.
For any $h\in H_{sym}^{2+\delta}(\T^{2\times2})$, it holds (as an equality between $L^2(\eta)$ variables)
\begin{equation}\label{doubleintegraldiagonalcontribution}
\brak{h,\eta\otimes\eta}=I^2(h)+\int_D h(x,x)dx
\end{equation}
(since it is true for the dense subset of symmetric products)
where we remark that $\int_D h(x,x)dx$ makes sense since $h$ has a continuous version.
We thus see that Ito-Wiener integration corresponds to ``subtract the diagonal contribution'' to the tensor product:
in order to make the dependence of double Ito-Wiener integrals on $\eta$, and motivated by the above discussion,
we will use in the following the notation
\begin{equation*}
:\brak{h,\eta\otimes\eta}:=I^2(h).
\end{equation*}

Besides those Gaussian distributions, we will be interested in a number of Poissonian variables, which we now define
in the framework of \cite{peszatzabczyk}. For $\lambda>0$, let $\pi^\lambda$ be the Poisson random measure on
$[0,\infty)\times H^{-1-\delta}$ with intensity measure $\nu$ given by the product of the measure $\lambda dt$ on $[0,\infty)$
and the image of $\sigma \delta_x$ where $\sigma=\pm1$ and $x\in\T^2$ are chosen uniformly at random.
In other terms, one can define the compound Poisson process on $H^{-1-\delta}$ (in fact on $\M$),
\begin{equation}\label{defsigma}
\Sigma_t^\lambda=\sum_{i:t_i\leq t}\sigma_i \delta_{x_i}=\int_0^t d\pi^\lambda,
\end{equation}
starting from the jump times $t_i$ of a Poisson process of parameter $\lambda$, a sequence $\sigma_i$ of i.i.d. $\pm1$-valued 
Bernoulli variable of parameter $1/2$ and a sequence $x_i$ of i.i.d uniform variables on $\T^2$.
Notice that, since its intensity measure has $0$ mean, $\pi^\lambda$ is a compensated Poisson measure,
or equivalently $\Sigma_t^\lambda$ is a $H^{-1-\delta}$-valued martingale. 
Moreover, $\Sigma_t^\lambda$ has the same covariance of the cylindrical Wiener process $W_t$ (up to the factor $\lambda$):
\begin{equation}\label{poissoncovariance}
\expt{\brak{\Sigma_t^\lambda,f}\brak{\Sigma_s^\lambda,g}}=\lambda (t\wedge s) \brak{f,g}_{L^2}^2,
\end{equation}
and also the same quadratic variation,
\begin{equation}\label{quadraticvariation}
\bra{\brak{\Sigma^\lambda,f}}_t=\lambda t \norm{f}_{L^2}^2.
\end{equation}

We will need a symbol for another Poissonian integral, the $H^{-1-\delta}$-valued (in fact $\M$-valued) variable
\begin{equation}\label{defxi}
\Xi^{\lambda,\theta}_M =\sum_{i:t_i\leq M} \sigma_i e^{-\theta t_i} \delta_{x_i}=\int_0^M e^{-\theta t} d\pi^\lambda,
\end{equation}
where $M,\theta>0$. Thanks to the negative exponential, the above integrals converge also when $M=\infty$, 
defining a random measure: we will call it $\Xi^{\lambda,\theta}=\Xi^{\lambda,\theta}_\infty$.

\begin{rmk}\label{rmk:vorticesmeasures}
	By (\ref{defxi}), a sample of the random measure $\Xi^{\lambda,\theta}_M$ is a finite sum of \emph{point vortices}
	$\xi_i \delta_{x_i}$ with $\xi_i\in\R, x_i\in\T^2$.
	We will say that the random vector $(\xi_i,x_i)_{i=1\dots N}\in(\R\times\T^2)^N$
	(with random length $N$) is sampled under $\Xi^{\lambda,\theta}_M$ if $\sum_{i=1}^N \xi_i \delta_{x_i}$ has the law of
	$\Xi^{\lambda,\theta}_M$. Analogously (and in a sense more generally speaking), 
	the sequence $(t_i,\sigma_i,x_i)_{i\in\N}$ is sampled under $\pi^\lambda$ if the sum of $\sigma_i \delta_{t_i}\delta_{x_i}$
	has the law of the Poisson point process $\pi^\lambda$.
\end{rmk}

Our Poissonian measures are characterised by their Laplace transforms: for any measurable and bounded
$f:\T^2\rightarrow \R$,
\begin{align}
\expt{\exp\pa{\alpha\brak{f,\Sigma_t^\lambda}}}&=
\exp\pa{\lambda t\int_{\set{\pm1}\times \T^2} (e^{\alpha\sigma f(x)}-1)d\sigma dx},\\
\label{laplacexi}
\expt{\exp\pa{\alpha\brak{f,\Xi^{\lambda,\theta}_M}}}&=
\exp\pa{\lambda \int_{[0,M]\times\set{\pm1}\times \T^2} (e^{\alpha\sigma e^{-\theta t} f(x)}-1)dtd\sigma dx},
\end{align}
where $d\sigma$ denotes the uniform measure on $\pm1$. By the isometry property of Poissonian integrals,
the second moments of $\Sigma_t^\lambda$ and $\Xi^{\lambda,\theta}_M$ are given by
\begin{equation*}
\expt{\norm{\Sigma_t^\lambda}^2_{H^{-1-\delta}}}=C \lambda t,\quad
\expt{\norm{\Xi^{\lambda,\theta}_M}^2_{H^{-1-\delta}}}= C \frac{\lambda}{\theta}(1-e^{-\theta M}),
\end{equation*}
where $C=\norm{\delta}^2_{H^{-1-\delta}}$ is the Sobolev norm of Dirac's delta.

In this Poissonian case, we can define double integrals against functions $h\in H_{sym}^{2+\delta}(\T^{2\times2})$
(which are continuous) $\PP$-almost surely as
\begin{align}
\brak{h,\Xi^{\lambda,\theta}_M\otimes\Xi^{\lambda,\theta}_M}
&=\sum_{i,j:t_i,t_j\leq M} \sigma_i\sigma_j e^{-\theta(t_i+t_j)} h(x_i,x_j),\\
\label{doubleintegralpoisson}
:\brak{h,\Xi^{\lambda,\theta}_M\otimes\Xi^{\lambda,\theta}_M}:
&=\sum_{\substack{i,j:t_i,t_j\leq M\\i\neq j}} \sigma_i\sigma_j e^{-\theta(t_i+t_j)} h(x_i,x_j),\\
\end{align} 
where the second one is defined in analogy with (\ref{doubleintegralsimplefunctions}),
and, as in that case, in fact it extends to an isometry $L_{sym}^2(\T^{2\times2})\rightarrow L^2(\PP)$,
\begin{equation*}
\expt{\abs{:\brak{h,\Xi^{\lambda,\theta}_M\otimes\Xi^{\lambda,\theta}_M}:}^2}=
\frac{\lambda^2}{\theta}(1-e^{-\theta M})^2 \norm{h}_{L^2(\T^{2\times2})}^2.
\end{equation*}
Let us note here that (\ref{doubleintegralpoisson}) also extends to functions $h$ that are
smooth outside the diagonal set ${(x,x): x\in\T^2}\subset \T^{2\times 2}$,
but possibly discontinuous or singular on it: this is going to be important in the sequel.

An important link between the objects we just defined is the following: 
\begin{prop}[Ornstein-Uhlenbeck process]\label{prop:oup}
	Consider the $H^{-1-\delta}$-valued linear stochastic differential equation
	\begin{equation}\label{ouequation}
	d u_t=-\theta u_t dt+ d\Pi_t.
	\end{equation}
	If $\Pi_t=\sqrt{\lambda}W_t$, there exists a unique stationary solution
	with invariant measure $\sqrt{\frac{\lambda}{2\theta}}\eta$, and if $u_0\sim C \eta$ ($C>0$), 
	the invariant measure is approached exponentially fast, $u_t\sim \sqrt{\frac{\lambda}{2\theta}(1-e^{-2\theta t}(1-C^2))}\eta$.
	
	Analogously, if $\Pi_t=\Sigma^\lambda_t$, there exists a unique stationary solution 
	with invariant measure $\Xi^{\theta,\lambda}_\infty$, and if $u_0\sim\Xi^{\theta,\lambda}_M$,
	then $u_t$ will have law $\Xi^{\theta,\lambda}_{M+t}$ for any later time $t>0$.
\end{prop}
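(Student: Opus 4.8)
The plan is to solve (\ref{ouequation}) by variation of constants. Using the integrating factor $e^{\theta t}$, the linear dynamics yields the explicit mild solution
\begin{equation*}
u_t = e^{-\theta t} u_0 + \int_0^t e^{-\theta(t-s)}\, d\Pi_s,
\end{equation*}
valid in $H^{-1-\delta}$ for either choice of driving noise, the stochastic convolution being well-defined in $H^{-1-\delta}$ exactly as $W_t$ and $\Sigma_t^\lambda$ are. Since $\norm{e^{-\theta t}u_0}_{H^{-1-\delta}}=e^{-\theta t}\norm{u_0}_{H^{-1-\delta}}\to 0$ and the initial datum is independent of the driving noise, all the assertions reduce to understanding the law of the stochastic convolution $Z_t:=\int_0^t e^{-\theta(t-s)}\, d\Pi_s$.

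For the Gaussian case $\Pi_t=\sqrt{\lambda} W_t$, I would observe that $Z_t$ is a centred Gaussian variable in $H^{-1-\delta}$, so its law is pinned down by the variance of $\brak{Z_t,f}$ for $f\in L^2(\T^2)$; by the Itô isometry and (\ref{whitenoisecovariance}) this equals $\lambda \int_0^t e^{-2\theta(t-s)}\, ds\,\norm{f}_{L^2}^2 = \frac{\lambda}{2\theta}(1-e^{-2\theta t})\norm{f}_{L^2}^2$. Letting $t\to\infty$ identifies the limiting law as $\sqrt{\lambda/2\theta}\,\eta$. Adding the independent Gaussian contribution of the initial datum, whose variance against $f$ carries the prefactor $e^{-2\theta t}$, produces a centred Gaussian interpolating between the two regimes and reproduces the stated $t$-dependent scaling, the stationary choice giving $u_t\sim\sqrt{\lambda/2\theta}\,\eta$ for every $t$.

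For the Poissonian case $\Pi_t=\Sigma_t^\lambda$, I would instead characterise $Z_t=\sum_{i:t_i\le t}\sigma_i e^{-\theta(t-t_i)}\delta_{x_i}$ through its Laplace transform: writing the expectation as the exponential of an integral against the intensity measure $\lambda\,ds\,d\sigma\,dx$ on $[0,t]\times\set{\pm1}\times\T^2$ and performing the change of variables $s\mapsto t-s$ turns it into exactly the right-hand side of (\ref{laplacexi}) with $M=t$, so that $Z_t$ has the law of $\Xi^{\lambda,\theta}_t$ (equivalently, this is the time-reversal symmetry of the Poisson process on $[0,t]$). Letting $t\to\infty$ gives the invariant measure $\Xi^{\lambda,\theta}_\infty$. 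For the refined statement, if $u_0\sim\Xi^{\lambda,\theta}_M$ is built from Poisson points independent of those driving $Z_t$, then shifting the times by $t$ shows $e^{-\theta t}u_0$ has the law of $\int_t^{M+t} e^{-\theta s}\, d\pi^\lambda$, while $Z_t\sim\int_0^t e^{-\theta s}\, d\pi^\lambda$; the independence and additivity of the Poisson integral over the disjoint windows $[0,t]$ and $[t,M+t]$ then yield $u_t\sim\Xi^{\lambda,\theta}_{M+t}$, which contains both stationarity (take $M=\infty$) and the stated evolution of the marginals.

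Finally, uniqueness of the stationary law in both cases follows from the same decomposition: any invariant $\mu$ must satisfy $\mu=\mathrm{Law}(e^{-\theta t}X+Z_t)$ with $X\sim\mu$ independent of $Z_t$, and letting $t\to\infty$ (so that $e^{-\theta t}X\to 0$ in $H^{-1-\delta}$ while $Z_t$ converges in law) forces $\mu$ to coincide with the limit law of $Z_t$. I expect the main obstacle to be not any single computation but the infinite-dimensional bookkeeping: checking that the mild solution is a genuine $H^{-1-\delta}$-valued process, that the covariance and Laplace-transform identities established against test functions $f\in L^2$ do upgrade to equalities of laws on $H^{-1-\delta}$, and—especially in the jump case—that the time-reversal and increment-splitting manipulations are legitimate at the level of the Poisson random measure rather than merely formal.
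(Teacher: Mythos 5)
Your proof is correct and follows essentially the same route as the paper: the paper's entire argument consists of the explicit stochastic convolution formula (\ref{ouexplicit}) plus an appeal to the classical theory in \cite{peszatzabczyk}, and your computations (It\^o isometry in the Gaussian case, Laplace transform with the time reversal $s\mapsto t-s$ in the Poisson case, and the $t\to\infty$ decomposition for uniqueness of the invariant law) are precisely the details the paper leaves to that reference. One small caveat: the stated law $\sqrt{\tfrac{\lambda}{2\theta}(1-e^{-2\theta t}(1-C^2))}\,\eta$ matches your variance computation only if $C$ is read as the ratio of the initial scale to the stationary scale $\sqrt{\lambda/(2\theta)}$, which is a normalization quirk of the paper's statement rather than a gap in your argument.
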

The linear equation (\ref{ouequation}), in both the outlined cases, has a unique $H^{-1-\delta}$-valued strong solution,
with continuous trajectories in the Gaussian case, and \emph{cadlag} trajectories in the Poissonian one.
Well-posedness of the linear equation and uniqueness of the invariant measure are part of the classical
theory (see \cite{peszatzabczyk}), and they descend from the explicit solution by stochastic convolution:
\begin{equation}\label{ouexplicit}
u_t=e^{-\theta t}u_0+\int_0^t e^{-\theta (t-s)}d\Pi_s,
\end{equation}
from which it is not difficult to derive also the last statement of the Proposition.

%%%%%%%%%%%%%%%%%%%%%%%%%%%%%%%%%%%%%%%%%%%%%%%%%%%%%%%%%%%%%%%%%%%%%%%%%%%%%%%%%5
\subsection{Weak solutions of 2D Euler equation}\label{ssec:weaksolutions}

We now review some definitions of measure-valued and distribution-valued solutions to the 2D Euler's equation:
the point is how to make sense of the multiplication appearing in the nonlinearity.
The equation in terms of the vorticity $\omega$ is (\ref{eulereq})
\begin{equation*}
\begin{cases}
\de_t \omega_t + u_t\cdot \nabla \omega_t =0\\
\nabla^\perp \cdot u_t=\omega_t,
\end{cases}
\end{equation*}
and it has to be complemented with boundary conditions: on the torus $\T^2$ one should impose that $\omega_t$ have zero average.
However, since we are dealing with a conservation law, the space average is not involved in the dynamics (it is constant).
\begin{rmk}
	We will henceforth deliberately ignore the zero average condition:
	it will always be possible to subtract a constant number (constant in time and space, but possibly a random variable) to
	take care of it, but we refrain from doing so to avoid a superfluous notational burden.
\end{rmk}

Let $G$ be the Green function of $\Delta$ on $\T^2$ with zero average, and let $K=\nabla^\perp G$ be
the Biot-Savart kernel; the former has the explicit representation
\begin{equation*}
G(x,y)=-\frac{1}{2\pi}\sum_{k\in \Z^2} \log(d_{\R^2}(x,y+k)).
\end{equation*}
We will use the fact that $|\nabla G(x,y)|,|K(x,y)|\leq \frac{C}{d(x,y)}$ for all $x,y$, with $C$ a universal constant.
The second equation of (\ref{eulereq}) can be inverted by means of the Biot-Savart kernel:
we can write $u_t=K\ast \omega_t$, and thus obtain an equation where only $\omega$ appears.
Its integral form against a smooth test function $f$ is
\begin{equation}\label{eulerintegral}
\brak{f,\omega_t}=\brak{f,\omega_0}+\int_0^t \int_{\T^{2\times 2}} K(x,y)\cdot \nabla f(x) \omega_s(x)\omega_s(y)dxdyds
\end{equation}
(keeping in mind that $\nabla\cdot\nabla^\perp \omega\equiv 0$ to perform integration by parts),
which can be symmetrised (swapping $x$ and $y$) into
\begin{equation}\label{eulerdelort}
\brak{f,\omega_t}=\brak{f,\omega_0}+\int_0^t \int_{\T^{2\times 2}} H_f(x,y)\omega_s(x)\omega_s(y)dxdyds
\end{equation}
where $H_f(x,y)=\frac{1}{2}K(x,y)(\nabla f(x)-\nabla f(y))$ is a bounded symmetric function, smooth outside the diagonal set
\begin{equation*}
\triangle^2:=\set{(x,x)\in \T^{2\times 2}}.
\end{equation*}
These three formulations are equivalent for smooth $\omega_t$,
but the integral forms, especially the symmetrised one, have been used to define more general solutions
of Euler's equation, see \cite{schochetweakvorticity}. 
One such solution is the system of (finitely many) \emph{Euler's point vortices}:
the evolution of the vorticity $\omega_t=\sum_{i=1}^N \xi_i \delta_{x_{i,t}}$ (with $\xi_i\in\R$ and $x_i\in \T^2$)
is given by (\ref{eulervortices}),
\begin{equation*}
\dot x_{i,t}=\sum_{j\neq i} \xi_j K(x_{i,t},x_{j,t}). 
\end{equation*}
This model is thoroughly discussed for instance in \cite{marchioro}, where it is remarked that it satisfies (\ref{eulerintegral})
if the double space integral is taken outside the diagonal $\triangle^2$, where $K$ is singular:
\begin{equation*}
\int_{\T^{2\times 2}\setminus\triangle^2} K(x,y)\cdot \nabla f(x) \omega_s(x)\omega_s(y)dxdy=\sum_{i\neq j}\xi_i \xi_j K(x_i,x_j)\cdot \nabla f(x_i).
\end{equation*}
It is thus possible, in sight of the notation we introduced in (\ref{doubleintegralpoisson}),
to formulate Euler equation in the point vortices case as follows:
if $\omega_t=\sum_{i=1}^N \xi_i \delta_{x_{i,t}}$ and we denote
\begin{equation*}
:\brak{H_f,\omega_t\otimes\omega_t}: =\sum_{i\neq j}^N\xi_i \xi_j H_f(x_i,x_j),
\end{equation*}
then it holds
\begin{equation}\label{normalisedeulerequation}
\brak{f,\omega_t}=\brak{f,\omega_0}+\int_0^t :\brak{H_f,\omega_t\otimes\omega_t}:ds.
\end{equation}
The need to avoid the diagonal set $\triangle^2$ in order to give meaning to singular solutions is going to be crucial in what follows,
as it is in the proof of the forthcoming important well-posedness result.
\begin{prop}[Marchioro-Pulvirenti]\label{prop:marchioro}
	Let $\xi_1,\dots,\xi_n\in\R$ and $x_1,\dots,x_n\in \T^2$.
	For almost every initial data $x_{1,0},\dots,x_{n,0}\in \T^2$ under the $n$-fold product of Lebesgue's measure, 
	the system of differential equations (\ref{eulervortices})
	has a smooth, global in time solution $x_{1,t},\dots,x_{n,t}$, which preserves the product measure on the initial condition.
	The measure-valued process $\omega_t=\sum_{i=1}^n \xi_i \delta_{x_i}$ then satisfies (\ref{eulerintegral}) in the sense above.
\end{prop}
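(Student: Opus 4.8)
The plan is to treat (\ref{eulervortices}) as an ordinary differential equation on the configuration space $(\T^2)^n$ and to run the classical program of Marchioro--Pulvirenti. First I would observe that off the collision set $\triangle_n=\set{x\in(\T^2)^n : x_i=x_j \text{ for some } i\neq j}$ the right-hand side of (\ref{eulervortices}), built out of the kernel $K$ which is smooth away from the diagonal, is locally Lipschitz (in fact real-analytic), so Cauchy--Lipschitz yields a unique smooth local solution that can be continued for as long as the configuration stays away from $\triangle_n$. The only way the solution can fail to be global is therefore a \emph{collision}, i.e. the trajectory reaching $\triangle_n$ in finite time.

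Next I would record the two structural features that drive the whole argument. On the one hand, since $K(x,y)=\nabla_x^\perp G(x,y)$ and $\nabla\cdot\nabla^\perp\equiv 0$, the vector field defining (\ref{eulervortices}) is divergence-free on $(\T^2)^n\setminus\triangle_n$; by Liouville's theorem the local flow preserves the product Lebesgue measure, which is exactly the invariance asserted in the statement. On the other hand, the system is Hamiltonian, $\xi_i\dot x_{i,t}=\nabla_{x_i}^\perp\H$ with $\H=\sum_{i<j}\xi_i\xi_j G(x_i,x_j)$, so $\H$ is conserved along trajectories. Because $G(x,y)=-\frac{1}{2\pi}\log d(x,y)+(\text{smooth})$, conservation of $\H$ already forbids the collision of any cluster of vortices whose intensities all share one sign; the delicate case is that of \emph{neutral} clusters, where the logarithmic singularities of opposite-sign pairs can cancel.

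The heart of the proof — and the step I expect to be the main obstacle — is to show that the set $\mathcal{C}_T$ of initial configurations whose trajectory reaches $\triangle_n$ before a fixed time $T$ is Lebesgue-negligible; global a.e. existence then follows by letting $T\to\infty$ and taking a countable union. The available ingredients are the velocity bound $\abs{\dot x_{i,t}}\leq C/\delta(x_t)$ (with $\delta$ the minimal pairwise distance), conservation of $\H$, and measure-preservation of the flow; the difficulty is that the velocity bound lets trajectories traverse neighbourhoods of $\triangle_n$ arbitrarily fast, so one cannot simply dominate $\abs{\mathcal{C}_T}$ by the (small, $O(\rho^2)$) volume of a tubular neighbourhood $U_\rho$ of the codimension-$2$ set $\triangle_n$. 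The resolution, following the arguments of \cite{marchioro}, is a more delicate covering estimate: one bounds the measure of the set of data entering $U_\rho$ before time $T$ uniformly in $\rho$, exploiting how the conserved quantities constrain the geometry of a collapsing cluster, and shows this measure tends to $0$ as $\rho\to 0$. Since $\mathcal{C}_T$ is contained in every such set, it is null. Reconciling the blow-up of the velocity field with the thinness of the singular set is the genuinely non-trivial input.

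Finally, on this full-measure set of initial data the verification of (\ref{eulerintegral}) is a direct computation. Writing $\omega_t=\sum_{i=1}^n\xi_i\delta_{x_{i,t}}$, so that $\brak{f,\omega_t}=\sum_i\xi_i f(x_{i,t})$, differentiating in time and using (\ref{eulervortices}) gives $\frac{d}{dt}\brak{f,\omega_t}=\sum_i\xi_i\nabla f(x_{i,t})\cdot\dot x_{i,t}=\sum_{i\neq j}\xi_i\xi_j K(x_{i,t},x_{j,t})\cdot\nabla f(x_{i,t})$, which is precisely the off-diagonal double integral appearing in the statement; integrating in time recovers (\ref{eulerintegral}), the exclusion of the diagonal being built into the restriction $j\neq i$ in the equations of motion.
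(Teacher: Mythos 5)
Your setup (local well-posedness off the collision set by Cauchy--Lipschitz, Liouville for the divergence-free Hamiltonian field, and the final verification of (\ref{eulerintegral}) by differentiating $\brak{f,\omega_t}$) matches the paper, and the last computation is exactly the one used there. But the core of the proof --- showing that the set of initial data leading to a collision in finite time is Lebesgue-null --- is left as a black box: you correctly identify it as the main obstacle, then defer it to ``a more delicate covering estimate \ldots following the arguments of \cite{marchioro}'' in which ``the conserved quantities constrain the geometry of a collapsing cluster.'' That is not an argument, and moreover it misdescribes the mechanism that actually works. The paper's proof (of the stronger \autoref{timedepmarchioro}, from which \autoref{prop:marchioro} follows) never uses conservation of the Hamiltonian $\H$ at all. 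Instead it regularises the Green function into a smooth $G_\delta$ agreeing with $G$ off a $\delta$-neighbourhood of the diagonal, so that the regularised flow is globally defined, smooth, and preserves product Lebesgue measure; it then introduces the Lyapunov functional $L_\delta(t)=-\sum_{i\neq j}G_\delta(x^\delta_{i,t},x^\delta_{j,t})$ and bounds $\int \sup_{t\leq T}L_\delta(t)\,dx^n$ \emph{uniformly in} $\delta$. The key points are that $\frac{d}{dt}L_\delta$ consists only of terms $\nabla G_\delta(x_i,x_j)\cdot\nabla^\perp G_\delta(x_i,x_k)$ with three distinct indices (the $i=j$ and $i=k$ terms vanish by orthogonality of $\nabla$ and $\nabla^\perp$), and that by measure invariance of the regularised flow the time integral of these terms is controlled by $\int_{D^3}\frac{C}{|x-y||x-z|}\,dx\,dy\,dz<\infty$. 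Since two vortices at distance $<\delta$ force $L_\delta>-C\log\delta$, \v{C}eby\v{s}\"ev then gives $\PP(\text{some pair gets within }\delta\text{ before }T)\leq C_T/|\log\delta|\to 0$, and on the complement the regularised solution \emph{is} the true solution. This quantitative $1/|\log\delta|$ bound is the missing idea; no covering of a tubular neighbourhood and no analysis of collapsing-cluster geometry enters.

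Two further remarks on why your sketched route is weaker than what the paper needs. First, your appeal to conservation of $\H$ cannot survive the generalisation the paper actually proves and relies on: in \autoref{timedepmarchioro} the intensities decay in time, $\xi_{i,t}=e^{-\theta(t-t_i)}\xi_{i,0}$, and new vortices enter at jump times, so $\H$ is not conserved; the Lyapunov estimate is robust to this precisely because it only uses $|\tilde a_{ijk}(t)|\leq 1$ and takes absolute values (so the sign cancellations of ``neutral clusters'' that worry you are irrelevant to the estimate). Second, your invocation of Liouville's theorem for the \emph{singular} field needs justification, since the flow is not yet known to exist globally; the paper avoids circularity by proving measure invariance for the regularised flow and transferring it to the limit on the good event. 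In short: correct scaffolding, correct identification of the hard step, but the actual estimate that closes the proof is absent and the hinted substitute (conservation laws plus a covering argument) would not deliver it.
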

(In fact the latter is a slight generalisation of the results in \cite{marchioro}, which will be a consequence of the further
generalisation we will prove in \autoref{sec:finitevortices}.)

In \cite{flandoli}, Flandoli performed a scaling limit of the point vortices system to exhibit
(stationary) solutions with space white noise marginals:
the meaning of the equation for such irregular vorticity processes was understood by carrying to the limit the formulation (\ref{normalisedeulerequation}), since, as we have seen in the last paragraph, the Wiener-Ito interpretation
of the nonlinear term makes perfect sense in the case of white noise. To proceed rigorously, let us give the following:

\begin{defi}\label{defi:solution}
	Let $(\omega_t)_{t\in[0,T]}$ be a $H^{-1-\delta}$-valued continuous stochastic process defined on a probability space $(\Omega,\F,\PP)$,
	with fixed time marginals $\omega_t$ having the law of white noise $\eta$ for all $t\in[0,T]$.
	We say that $\omega$ is a weak solution to Euler's equation if for any $f\in C^\infty(\T^2)$, $\PP$-almost surely, 
	for any $t\in[0,T]$,
	\begin{equation}\label{eulerdouble}
	\brak{f,\omega_t}=\brak{f,\omega_0}+\int_0^t :\brak{H_f,\omega_s\otimes\omega_s}: ds.
	\end{equation}
\end{defi}

\begin{rmk}
	Notice that the Ito-Wiener integrals (in space) appearing in the definition are almost surely integrable in time
	since their $L^2(\PP)$ norms are uniformly bounded in $t$. The latter definition coincides with the one of \cite{flandoli},
	only, in that article, it was not observed that the approximation procedure used to define the nonlinear term
	in fact coincides with the classic Ito-Wiener integral.
\end{rmk}

The formulation (\ref{eulerdouble}) is in fact quite general: interpreting the colons as 
``subtraction of the diagonal contribution'', this formulation might include all deterministic solutions, 
both in the classical and weak formulation (\ref{eulerdelort}) (\emph{cf.} \cite{schochetweakvorticity}),
the point vortices solution of \autoref{prop:marchioro}, and it is the sense in which
the limit process with white noise marginals of \cite{flandoli} solves Euler's equation.

\begin{prop}[Flandoli]\label{prop:flandoli}
	There exists a stationary stochastic process $\omega_t$ with fixed-time marginals $\omega_t\sim \eta$ 
	and trajectories of class $C([0,T],H^{-1-\delta})$ for any $\delta>0$  which is a solution of Euler's equation in the sense of \autoref{defi:solution}. 
\end{prop}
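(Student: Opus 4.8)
The plan is to obtain $\omega$ as a scaling limit of the finite point vortices systems of \autoref{prop:marchioro}, following \cite{flandoli}. For each $N$ I fix i.i.d. signs $\sigma_1,\dots,\sigma_N$ uniform on $\set{\pm1}$ and intensities $\xi_i=\sigma_i/\sqrt N$, and let the positions $x_{1,t}^N,\dots,x_{N,t}^N$ evolve under (\ref{eulervortices}) from an i.i.d. uniform initial configuration. By \autoref{prop:marchioro} this dynamics is globally well defined for almost every datum and preserves the product of uniform measures; since the i.i.d. uniform law is absolutely continuous, the empirical vorticity
$$\omega_t^N=\frac{1}{\sqrt N}\sum_{i=1}^N \sigma_i \delta_{x_{i,t}^N}$$
is almost surely a \emph{stationary} $H^{-1-\delta}$-valued process satisfying (\ref{normalisedeulerequation}). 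At any fixed $t$ the $x_{i,t}^N$ are i.i.d. uniform, whence $\brak{f,\omega_t^N}=N^{-1/2}\sum_i\sigma_i f(x_{i,t}^N)$ is a normalised sum of i.i.d. centred variables of variance $\norm{f}_{L^2}^2$; by the central limit theorem, together with the uniform bound $\sup_{N,t}\expt{\norm{\omega_t^N}_{H^{-1-\delta}}^2}=\sum_k(1+|k|^2)^{-1-\delta}<\infty$, the fixed-time marginals of $\omega^N$ converge in law on $H^{-1-\delta}$ to white noise $\eta$.

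The analytic core is a uniform estimate on the nonlinear term. As recalled in \autoref{ssec:weaksolutions}, $H_f(x,y)=\tfrac12 K(x,y)\pa{\nabla f(x)-\nabla f(y)}$ is \emph{bounded} — the Lipschitz cancellation $\abs{\nabla f(x)-\nabla f(y)}\leq Cd(x,y)$ absorbs the singularity $\abs{K}\leq C/d(x,y)$ — hence $H_f\in L^2(\T^{2\times2})$. Computing the second moment of the completely degenerate $U$-statistic $:\brak{H_f,\omega_t^N\otimes\omega_t^N}:=N^{-1}\sum_{i\neq j}\sigma_i\sigma_j H_f(x_{i,t}^N,x_{j,t}^N)$ (only coincident index-pairs survive the sign average) yields
$$\expt{\abs{:\brak{H_f,\omega_t^N\otimes\omega_t^N}:}^2}=C\,\frac{N-1}{N}\,\norm{H_f}_{L^2(\T^{2\times2})}^2,$$
uniformly in $N$ and, by stationarity, in $t$. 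Feeding this into (\ref{normalisedeulerequation}) and estimating the time integral by Cauchy--Schwarz gives $\expt{\abs{\brak{f,\omega_t^N}-\brak{f,\omega_s^N}}^2}\leq C\norm{H_f}_{L^2}^2\abs{t-s}^2$. Taking $f=e_k$ and summing against $(1+|k|^2)^{-s}$ with $s$ large enough, the increments are controlled in a weak space $H^{-s}$, while the marginals are uniformly bounded in $H^{-1-\delta'}$ with $\delta'<\delta$; a Kolmogorov continuity criterion combined with the compact embeddings $H^{-1-\delta'}\hookrightarrow H^{-1-\delta}\hookrightarrow H^{-s}$ then yields tightness of the laws of $\omega^N$ in $C([0,T],H^{-1-\delta})$.

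By Prokhorov a subsequence converges in law, and by Skorokhod representation I may assume $\omega^N\to\omega$ almost surely in $C([0,T],H^{-1-\delta})$ on one probability space. Stationarity and the white-noise marginals are preserved under the limit, so $\omega_t\sim\eta$ for every $t$; and since $\brak{f,\cdot}$ is continuous on $H^{-1-\delta}$, the linear terms of (\ref{eulerdouble}) converge, which forces $\int_0^t :\brak{H_f,\omega_s^N\otimes\omega_s^N}:ds$ to converge as well. \textbf{The main obstacle is to identify this limit with $\int_0^t :\brak{H_f,\omega_s\otimes\omega_s}:ds$}, because $H_f$ is singular on $\triangle^2$ and the functional $\omega\mapsto\ :\brak{H_f,\omega\otimes\omega}:$ is quadratic, hence discontinuous for the $H^{-1-\delta}$ topology, so no soft continuity argument applies.

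Here the chaos structure is decisive. For a fixed \emph{smooth} symmetric $h_m$, the degenerate $U$-statistic $:\brak{h_m,\omega^N\otimes\omega^N}:$ converges in law to the Gaussian second chaos $I^2(h_m)=\ :\brak{h_m,\eta\otimes\eta}:$ by the central limit theorem for homogeneous sums, jointly with the first-chaos couplings; the diagonal terms match in the limit through (\ref{doubleintegraldiagonalcontribution}) by the law of large numbers. Since both $h\mapsto\ :\brak{h,\omega^N\otimes\omega^N}:$ and $h\mapsto I^2(h)$ are isometries up to a uniform constant, the error made by replacing $H_f$ with $h_m$ is bounded by $C\norm{H_f-h_m}_{L^2}$ \emph{uniformly in} $N$; letting $h_m\to H_f$ in $L^2(\T^{2\times2})$ transfers the convergence to the singular kernel and identifies the limit of $:\brak{H_f,\omega_s^N\otimes\omega_s^N}:$ with $:\brak{H_f,\omega_s\otimes\omega_s}:$. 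The uniform second-moment bound furnishes the uniform integrability needed to exchange this limit with the time integral, so passing to the limit in (\ref{normalisedeulerequation}) shows that $\omega$ satisfies (\ref{eulerdouble}); hence $\omega$ is a stationary solution in the sense of \autoref{defi:solution}.
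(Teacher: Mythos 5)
Your strategy is the same one underlying the paper's treatment (and \cite{flandoli}, to which the paper attributes this proposition): a central-limit scaling $\xi_i=\sigma_i/\sqrt N$ of the Marchioro--Pulvirenti vortex system of \autoref{prop:marchioro}, stationarity inherited from preservation of the product Lebesgue measure, a compactness argument plus Skorokhod representation, and identification of the nonlinear term by approximating $H_f$ in $L^2(\T^{2\times 2})$ with smooth symmetric kernels, using the uniform-in-$N$ isometry property of the degenerate U-statistics $h\mapsto\,:\brak{h,\omega^N\otimes\omega^N}:$. Indeed, \autoref{rmk:invarianteuler} of the paper presents this proposition as exactly the undamped, unforced special case of its own limiting procedure. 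Your stationarity argument, the fixed-time CLT, the variance computation for the nonlinearity and the final approximation step all match that blueprint.

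The genuine gap is in the tightness step. You claim tightness in $C([0,T],H^{-1-\delta})$ from two ingredients: fixed-time \emph{second}-moment bounds in $H^{-1-\delta'}$ and the increment bound $\expt{\norm{\omega^N_t-\omega^N_s}^2_{H^{-s}}}\leq C|t-s|^2$ in a much weaker space, invoking ``Kolmogorov plus compact embeddings''. As stated this does not close. To turn the weak-space increment bound into one in $H^{-1-\delta}$ you must interpolate, $\norm{v}_{H^{-1-\delta}}\leq\norm{v}_{H^{-1-\delta'}}^{1-\theta}\norm{v}_{H^{-s}}^{\theta}$ with $\theta=(\delta-\delta')/(s-1-\delta')$, which is \emph{small} since $s>3$; with only second moments in the strong space, H\"older's inequality then yields $\expt{\norm{\omega^N_t-\omega^N_s}^2_{H^{-1-\delta}}}\leq C|t-s|^{2\theta}$ with $2\theta<1$, below the Kolmogorov threshold. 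The alternative, pathwise, interpolation $\norm{\omega_t-\omega_s}_{H^{-1-\delta}}\leq\pa{2\sup_r\norm{\omega_r}_{H^{-1-\delta'}}}^{1-\theta}\norm{\omega_t-\omega_s}_{H^{-s}}^{\theta}$ needs a uniform-in-time bound in the strong norm, which your estimates do not give (the equation only controls sup-in-time norms in $H^{-3-\delta}$, since $\norm{H_{e_k}}_\infty\sim|k|^2$). What is missing is precisely the estimate the paper singles out as ``the crux in all the forthcoming bounds'': moment bounds of \emph{every} order, $\expt{\brak{h,\omega^N_t}^{2p}}\leq C_p\norm{h}_\infty^{2p}$ uniformly in $N$ and $t$, proved there by a Fa\`a di Bruno computation and available in your Bernoulli setting by a Rosenthal-type pairing argument. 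With all moments in hand, either your route (choosing H\"older exponents so that the $H^{-1-\delta}$-increment bound carries an exponent in $(1,2)$) or the paper's route through Simon's criterion, \autoref{cor:simon}, closes the argument. A smaller point: after Skorokhod, the limit of $:\brak{h_m,\omega^N\otimes\omega^N}:$ for smooth $h_m$ should be identified via the pathwise continuity of $\omega\mapsto\brak{h_m,\omega\otimes\omega}$ on $H^{-1-\delta}$ (plus the law of large numbers for the diagonal), not via a central limit theorem for homogeneous sums: convergence in law alone cannot identify $\int_0^t:\brak{H_f,\omega_s\otimes\omega_s}:\,ds$ as a functional of the specific limit process $\omega$.
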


\begin{rmk}
	In fact, \cite{flandoli} proves the same result also for processes with fixed-time marginals $\omega_t\sim \rho_t(\eta)$
	for suitable functions $\rho:[0,T]\times H^s(\T^2)\rightarrow \R$.
\end{rmk}

For the sake of completeness, we recall that solutions to Euler's equation with white noise marginals were first
built in \cite{albeverioribeiro}, by means of Galerkin approximation on $\T^2$. 

%%%%%%%%%%%%%%%%%%%%%%%%%%%%%%%%%%%%%%%%%%%%%%%%%%%%%%%%%%%%%%%%%%%%%%%%%%%%%%%%%%%%%%%%%
\subsection{Main Results}\label{sec:mainresult}

Fix $\lambda,\theta>0$. Our model is the stochastic differential equation
\begin{equation}\label{model}
d \omega=-\theta \omega dt+(K\ast \omega)\cdot \nabla \omega dt+ d\Pi_t,
\end{equation}
where $d\Pi_t$ is either the Poisson process $d\Sigma^\lambda_t$ or the space-time white noise $dW_t$.
We have seen in \autoref{prop:oup} how the linear part of the equation behaves;
the intuition provided by the point vortices system suggests that, thanks to the Hamiltonian form
of the nonlinearity, the latter only contributes to ``shuffle'' the vorticity without changes to the fixed time statistics.
This intuition can be motivated as follows. Since the point vortices system preserves the product Lebesgue measure,
the system must preserve the Poissonian random measures $\Xi_{M}^{\lambda,\theta}$ we introduced in
\autoref{ssec:randomvariables}, because the positions of vortices under those measures are uniformly, independently scattered
(this fact will be rigorously proved in \autoref{sec:finitevortices} for $M<\infty$).
Building Gaussian solutions by approximation with Poissonian ones thus must produce the same phenomenon.
In other words, with an eye towards stationary solutions, 
we expect to be able to build a Poissonian stationary solution with $\omega_t\sim \Xi^{\theta,\lambda}_\infty$ in the case
$\Pi_t=\Sigma^\lambda_t$, and a stationary Gaussian solution with $\omega_t\sim \sqrt{\frac{\lambda}{2\theta}}\eta$
in the case $\Pi_t=\sqrt{\lambda} W_t$. 

\begin{rmk}
	These claims are deeply related with the fact that 2D Euler's equation preserves \emph{enstrophy}, $\int_{\T^2}\omega(x)^2dx$,
	when smooth solutions are considered. The quadratic form associated to enstrophy, that is the $L^2(\T^2)$ product,
	is (up to multiplicative constants) the covariance of random fields $\Xi_{M}^{\lambda,\theta}$ and $\eta$:
	as already remarked in \cite{albeverioferrariolevy}, one should expect all random fields with such covariance to be invariant for Euler's equation, even if the very meaning of the latter sentence has to be clarified.
\end{rmk}

First and foremost, we need to specify a suitable concept of solution:
inspired by the discussion of the last paragraph, we give the following one.
\begin{defi}\label{defi:modelsolution}
	Fix $T,\delta>0$, and let $(\Omega,\F,\PP, \F_t)$ be a probability space with a filtration $\F_t$ satisfying the usual hypothesis,
	with respect to which  $(\Pi_t)_{t\in[0,T]}$ is a $H^{-1-\delta}$-valued $\F_t$-martingale. 
	Let $(\omega_t)_{t\in[0,T]}$ be a $H^{-1-\delta}$-valued $\F_t$-predictable process, with trajectories of class
	\begin{equation}\label{regularityassumption}
	L^2([0,T],H^{-1-\delta})\cap \D([0,T],H^{-3-\delta})
	\end{equation}
	for some $q\geq 1$ ($\D([0,T],S)$ denotes the space of $S$-valued \emph{cadlag} functions into a metric space $S$).
	We consider the cases:
	\begin{itemize}
		\item[(P)]  $\Pi_t=\Sigma^\lambda_t$ and $\omega_t\sim \Xi^{\lambda,\theta}_{M+t}$ for all $t\in[0,T]$,
		with $0\leq M<\infty$;
		\item[(Ps)]  $\Pi_t=\Sigma^\lambda_t$ and $\omega_t\sim \Xi^{\lambda,\theta}_{\infty}$ for all $t\in[0,T]$;
		\item[(G)] $\Pi_t=\sqrt{\lambda}W_t$ and $\omega_t\sim \sqrt{\frac{\lambda}{2\theta}(1-e^{-2\theta(M+t)})}\eta$
		for all $t\in[0,T]$, with $0\leq M<\infty$;
		\item[(Gs)] $\Pi_t=\sqrt{\lambda}W_t$ and $\omega_t\sim \sqrt{\frac{\lambda}{2\theta}}\eta$
		for all $t\in[0,T]$.
	\end{itemize}
	We say that $(\Omega,\F,\PP,\F_t,\Pi_t,\omega_0,(\omega_t)_{t\in[0,T]})$ is a \emph{weak solution} of (\ref{model}) if
	for any $f\in C^\infty(\T^2)$ it holds $\PP$-almost surely for any $t\in[0,T]$: 
	\begin{equation}\label{weakmodel}
	\brak{f,\omega_t}=e^{-\theta t}\brak{f,\omega_0}+\int_0^t e^{-\theta(t-s)} :\brak{H_f,\omega_s\otimes\omega_s}: ds +\int_0^t e^{-\theta(t-s)} \brak{f,d\Pi_s}.
	\end{equation}
	If instead, given $(\Omega,\F,\PP,\F_t,W_t)$ there exists a process $\omega_t$
	as above, we call it a \emph{strong solution}.
\end{defi}

\begin{rmk}\label{rmk:integralformulation}
	Equation (\ref{weakmodel}) is motivated in sight of (\ref{ouexplicit}) and (\ref{eulerdouble}).
	The ``variation of constants'' expression in the above definition is equivalent to the ``integral'' one
	\begin{equation}\label{weakmodelsimple}
	\brak{f,\omega_t}=\brak{f,\omega_0}-\theta \int_0^t \brak{f,\omega_s} ds
	+\int_0^t :\brak{H_f,\omega_s\otimes\omega_s}: ds +\brak{f,\Pi_t},
	\end{equation}
	as one can verify integrating by parts in time. Both versions will be useful in what follows, but we deem (\ref{weakmodel})
	the most suggestive.
\end{rmk}

\begin{rmk}\label{rmk:nonlinearterm}
	The nonlinear term of (\ref{weakmodel}) is well-defined thanks to the isometry properties of Gaussian
	and Poissonian double integral (see \autoref{sec:definitions}): indeed, the integrand is bounded in $L^2(\PP)$ uniformly in time,
	so that, in particular, $\int_0^t :\brak{H_f,\omega_s\otimes\omega_s}: ds$ is a continuous function of time.
\end{rmk}

We are now able to state our main result.

\begin{thm}\label{thm:mainresult}
	There exist weak solutions of (\ref{model}) in all the outlined cases, stationary 
	(as $H^{-1-\delta}$-valued stochastic processes) in the cases (Ps) and (Gs). 
\end{thm}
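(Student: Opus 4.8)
The plan is to produce all solutions by a compactness/approximation scheme, treating the cases in the order (P), (Ps), (G), (Gs), with the genuinely Poissonian finite-vortex model as the basic building block. First I would construct case (P) directly. Sampling $(\xi_i,x_i)$ under $\Xi^{\lambda,\theta}_M$ gives $\omega_0$; I let the positions evolve by the Biot--Savart ODE $\dot x_{i,t}=\sum_{j\neq i}\xi_{j,t}K(x_{i,t},x_{j,t})$ while the intensities are quenched, $\xi_{i,t}=\xi_{i,0}e^{-\theta t}$, and new vortices are inserted at the jump times of $\Sigma^\lambda_t$ with independent uniform positions and $\pm1$ intensities. Between jumps this is a finite, smooth, Hamiltonian ODE, so the first task is to extend \autoref{prop:marchioro} to this setting: for a.e.\ configuration the flow is global and collision-free (the inserted vortices a.s.\ avoid the diagonal and the Lebesgue-null bad set), and it preserves the product Lebesgue measure on positions. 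Granting this, differentiating $\brak{f,\omega_t}=\sum_i\xi_{i,t}f(x_{i,t})$ in time, symmetrizing the resulting double sum into $:\brak{H_f,\omega_t\otimes\omega_t}:$ and adding the jump contributions yields exactly the integral form \eqref{weakmodelsimple}, hence \eqref{weakmodel}. Measure preservation, together with the fact that positions under $\Xi^{\lambda,\theta}_M$ are i.i.d.\ uniform, then forces $\omega_t\sim\Xi^{\lambda,\theta}_{M+t}$, in agreement with the linear picture of \autoref{prop:oup}.

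For the stationary case (Ps) I would let $M\to\infty$. The second moments $\expt{\norm{\Xi^{\lambda,\theta}_{M+t}}^2_{H^{-1-\delta}}}$ are bounded uniformly in $M,t$, and the tail $\Xi^{\lambda,\theta}_\infty-\Xi^{\lambda,\theta}_{M}$ has $L^2(\PP)$-norm of order $e^{-\theta M}$, so the marginals converge to $\Xi^{\lambda,\theta}_\infty$ uniformly on $[0,T]$. To obtain genuine stationarity I would run the construction from time $-M$ started at the empty configuration, so that at time $0$ the vorticity has law $\Xi^{\lambda,\theta}_{M}\to\Xi^{\lambda,\theta}_\infty$; this realizes the ``started from $-\infty$'' stationary solution of \eqref{ouequation} at the nonlinear level. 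I would then prove tightness of the laws in $\D([0,T],H^{-1-\delta'})$ with $\delta'>\delta$, using the uniform moment bound, the compact embedding $H^{-1-\delta}\hookrightarrow H^{-1-\delta'}$, and increment control from \eqref{weakmodelsimple}: the damping term is Lipschitz in time, the nonlinear term is uniformly bounded in $L^2(\PP)$ by \autoref{rmk:nonlinearterm}, and the forcing increments obey the Poisson modulus. A subsequential limit extracted via Skorokhod representation is identified as a weak solution with $\omega_t\sim\Xi^{\lambda,\theta}_\infty$ for every $t$, and stationarity of its law follows by comparing $\omega^M$ with its time shifts as $M\to\infty$.

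The Gaussian cases (G) and (Gs) I would obtain from the Poissonian ones by the central-limit scaling of \cite{flandoli}: rescaling a large number $N$ of weak vortices (intensities of order $N^{-1/2}$, rate of order $\lambda N$) so that, by a central limit theorem in $H^{-1-\delta}$, the forcing rescales to $\sqrt{\lambda}W$ and the invariant law $\Xi^{\lambda N,\theta}_\infty$ rescales to $\sqrt{\lambda/2\theta}\,\eta$, with paths continuous in the limit. Tightness is established as in case (Ps). The crux, and the step I expect to be the main obstacle throughout, is passing the nonlinear term to the limit: one must show that after time integration the Poissonian double integrals $:\brak{H_f,\omega^N_s\otimes\omega^N_s}:$ converge to the Wiener--It\^o double integral $:\brak{H_f,\omega_s\otimes\omega_s}:=I^2(H_f)$, that is, that the relevant second Poisson chaos converges to the second Wiener chaos. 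This is delicate precisely because $H_f$, although bounded, is discontinuous on the diagonal $\triangle^2$, so the convergence cannot be read off continuity of $H_f$; it must instead be controlled through the isometry estimates of \autoref{ssec:randomvariables} (which, as emphasized there, survive for kernels singular on the diagonal) combined with a fourth-moment/chaos-convergence argument, and then upgraded from $L^2(\PP)$ control of the time integral to convergence along trajectories via Skorokhod representation. The same scaling applied to case (Ps) yields the stationary Gaussian solution (Gs), completing all four cases and hence \autoref{thm:mainresult}.
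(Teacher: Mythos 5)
Your overall architecture --- finite-vortex solutions via a time-dependent Marchioro--Pulvirenti argument, the limit $M\rightarrow\infty$ for (Ps), and a central-limit scaling for (G)/(Gs), glued together by moment bounds, tightness and Skorokhod representation --- is exactly the paper's. However, two of your technical steps have genuine gaps. The first is the claimed tightness in $\D([0,T],H^{-1-\delta'})$ with $\delta'>\delta$: this is not obtainable from the increment control you describe. The obstruction is the nonlinear drift. The isometry bound gives $\expt{\abs{:\brak{H_f,\omega_s\otimes\omega_s}:}^2}\leq C\norm{H_f}_\infty^2\leq C\norm{f}_{C^2}^2$, so testing increments against the Fourier basis $e_k$ costs a factor $|k|^4$, and $\sum_{k\in\Z^2}(1+|k|^2)^{-1-\delta'}|k|^4$ diverges for every $\delta'$; the increments are summable in $H^{-s}$ only for $s>3$. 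This is precisely why the paper works with the pair of spaces $L^q([0,T],H^{-1-\delta})\cap\D([0,T],H^{-3-\delta})$: Aldous' criterion is applied in the weak space $H^{-3-\delta}$, while Simon's compactness criterion (fed by the uniform moments in $L^p(\Omega\times[0,T],H^{-1-\delta})$ and a fractional $W^{1/2-\gamma,1}([0,T],H^{-3-\delta})$ bound in time) recovers compactness in $L^q([0,T],H^{-1-\delta})$, whose spatial regularity is what the rest of the argument uses. Your scheme would be repaired simply by adopting this two-space setup.

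The second gap is the identification of the nonlinear term in the Gaussian limit, which you correctly single out as the crux but propose to resolve with a fourth-moment/chaos-convergence argument. That is the wrong tool: such theorems yield convergence \emph{in law} of Poisson chaoses to a Gaussian chaos, whereas after the Skorokhod coupling what is needed is convergence in probability of $:\brak{H_f,\omega_{n,s}\otimes\omega_{n,s}}:$ to the double integral built from the limit field $\omega_s$ itself --- a statement about the coupled random variables, not about their laws. The paper's mechanism, absent from your sketch, is elementary: approximate $H_f$ in $L^2_{sym}$ by smooth symmetric kernels $H_k$ \emph{vanishing on the diagonal}. For such kernels the normalised pairing coincides with the plain tensor pairing, $:\brak{H_k,\omega\otimes\omega}:=\brak{H_k,\omega\otimes\omega}$, both for point-vortex measures (the diagonal terms $\sum_i\xi_i^2H_k(x_i,x_i)$ vanish) and for the Gaussian limit (the correction in (\ref{doubleintegraldiagonalcontribution}) is $\int H_k(x,x)dx=0$); the plain pairing is a continuous functional of $\omega$ in the negative Sobolev topology, hence passes to the almost sure limit, and the error $H_f-H_k$ is controlled uniformly in $n$ by the isometry. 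Your isometry remark is half of this argument; the diagonal-vanishing approximation is the missing half. Finally, a minor structural difference: you obtain (Gs) by scaling the already-constructed (Ps) solutions (an iterated limit), while the paper takes a joint limit $M_n,N_n\rightarrow\infty$ of the explicit finite-vortex processes; your route can be made to work because all bounds are uniform in $M\in[0,\infty]$, but the joint limit avoids having to re-derive the increment estimates for the (Ps) solutions, which are only weak solutions without the point-vortex structure.
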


As already remarked, equation (\ref{model}) is difficult to deal with directly in the Gaussian 
(or even the stationary Poisson) case:
for instance it does not seem possible to treat it with fixed point or semigroup techniques.
We prove existence of stationary solutions by taking limits of point vortices solutions, 
corresponding to the case (P).
We begin with a solution $\omega_{M}$ of the equation (\ref{model}) with noise $\Sigma^\lambda_t$ starting 
from finitely many vortices distributed as $\Xi_M^{\theta,\lambda}$. Well-posedness in this case is ensured
by a generalisation of \autoref{prop:marchioro}, whose proof is the content of \autoref{sec:finitevortices}.
The first limit we consider is $M\rightarrow\infty$, so to build a stationary solution with invariant measure 
$\Xi^{\theta,\lambda}$ and thus obtain existence in case (Ps).
Scaling intensities $\sigma\rightarrow \frac{\sigma}{\sqrt N}$ 
and generation rate $\lambda\rightarrow N\lambda$, we prove that as $N\rightarrow \infty$ 
the limit points are stationary solutions of (\ref{model}) driven by space-time white noise and with invariant measure the space white noise. The nonstationary Gaussian case (G) will be derived analogously, in this sort of central limit theorem.

We are applying a \emph{compactness method}: first, we prove probabilistic bounds on the involved distribution, in order to
-second step- apply a compactness criterion ensuring tightness of the approximating processes; finally, we pass to the limit
the equation satisfied by the approximants. 

\begin{rmk}\label{rmk:invarianteuler}
	Consider the case when no damping or forcing are present: we noted above that the classical finite vortices system
	(\ref{eulervortices}) preserves the product Lebesgue's measure, so in particular the distributions
	$\Xi_{M}^{\theta,\lambda}$ with $M<\infty$ and $\theta,\lambda>0$ are also invariant.
	The very same limiting procedure we are going to use, as $M\rightarrow\infty$, proves existence of stationary solutions
	to Euler's equation in its weak formulation (\ref{eulerdouble})
	with invariant measure $\Xi_{\infty}^{\theta,\lambda}$ (or $\eta$, the case of \cite{flandoli}), 
	where the definition of solution is to be given in the fashion of \autoref{defi:solution}.
	More generally, Poissonian and Gaussian stationary solutions, as suggested in \cite{albeverioferrariolevy},
	should be particular cases of stationary solutions with independently scattered random distributions.
\end{rmk}

%%%%%%%%%%%%%%%%%%%%%%%%%%%%%%%%%%%%%%%%%%%%%%%%%%%%%%%%%%%%%%%%%%%%%%%%%%%%%
\section{Solutions with finitely many vortices}\label{sec:finitevortices}

Even in the case of initial data distributed as $\Xi_M^{\lambda,\theta}$, that is with almost surely finitely many
initial vortices, solving the nonlinear equation
\begin{equation}\label{modelpoisson}
d \omega=-\theta \omega dt+(K\ast \omega)\cdot \nabla \omega dt+ d\Sigma^\lambda_t
\end{equation}
is not a trivial task. We will build a solution describing explicitly how the initial vortices and the ones
added by the noise term evolve, as a system of increasingly numerous differential equations for the positions 
of vortices $x_i$. 
Intuitively, the process $\omega_{M,t}$ is defined as follows: from the initial datum $\omega_{M}(0)$, 
which is sampled under $\Xi_M^{\theta,\lambda}$, we let the system evolve according to the deterministic dynamics
\begin{equation*}
\dot x_i= \sum_{j\neq i}\xi_j e^{-\theta t} K(x_i,x_j)
\end{equation*}
until the first jump time $t_1$ of the driving noise $\Sigma^\lambda_t$,
when we add the vortex corresponding to the jump, and so on.
To treat the model rigorously, let us introduce the following notation:
let $x_{1,0},\dots,x_{n,0}$ and $\xi_{1,0},\dots,\xi_{n,0}$ be the (random) positions and signs of vortices of the initial datum,
and set for notational convenience $t_1=\dots=t_n=0$ their birth time;
at time $t_i$ it is added a vortex with intensity $\xi_{i,t_i}=\pm 1$ in the position $x_{i,t_i}$, but we can pretend it to actually have existed since time 0, and just come into play at the time $t_i$. Thus, our equations are
% \begin{equation}
%  
% \end{equation}
%\dot x_{i,t}&=
%\begin{cases} 0 & t<t_i\\
%	\sum_{j\neq i:t_j< t} \xi_{j,t} K(x_{i,t},x_{j,t}) & t\geq t_i.\\
%\end{cases}\\
%that is, in differential form,
\begin{align}
\label{posdyn}
x_{i,t}&= x_{i,t_i}+\one_{t_i\leq t}\int_{t_i}^t \sum_{j\neq i:t_j\leq s} \xi_{j,s} K(x_{i,s},x_{j,s})ds,\\
\label{intdyn}
\xi_{i,t}&=\begin{cases}\xi_{i,0} &t<t_i \\e^{-\theta(t-t_i)}\xi_{i,0} & t\geq t_i. \end{cases}      
\end{align}
In this formulation of the problem, part of the randomness consists in the positions and intensities of the 
initial vortices and the ones to be: the random jump times $t_i$ then determine when the latter ones become part of the system.
Let us thus fix the $t_i$'s (that is, condition the process given the distribution of the $t_i$'s) so to reduce us to
a deterministic problem with random initial data. The existence of a solution for almost every initial condition is ensured
by the following generalisation of Proposition \ref{prop:marchioro}.
\begin{prop}\label{timedepmarchioro}
	Let $(x_{i,0})_{i\in\N}$ be a sequence of i.i.d uniform variables on $\T^2$.
	For every locally finite sequence of jump times $0\leq t_1\leq\dots\leq t_i\leq \dots\leq\infty$
	and initial intensities $(\xi_{i,0})\in[-1,1]$ the system of equations (\ref{posdyn}) and (\ref{intdyn})
	possesses a unique, piecewise smooth and \emph{cadlag}, global in time solution, for a full probability set which does not depend on the
	choice of $t_i,\xi_{i,0}$. 
	At any time, the joint law of positions $x_i$ is the infinite product of Lebesgue measure on $\T^2$.
\end{prop}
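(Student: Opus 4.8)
The plan is to reduce the infinite, time-dependent vortex system to the finite, autonomous setting of Proposition \ref{prop:marchioro}, exploiting two structural features: the system is \emph{piecewise autonomous} between jump times, and at any finite time only finitely many vortices have been born. Between consecutive jump times $t_k$ and $t_{k+1}$, the positions of the already-born vortices $x_1,\dots,x_k$ evolve under the ODE $\dot x_i=\sum_{j\neq i}\xi_{j,s}K(x_i,x_j)$, where the intensities $\xi_{j,s}=e^{-\theta(s-t_j)}\xi_{j,0}$ are smooth, bounded, time-dependent coefficients. So the first task is to establish a time-dependent analogue of Marchioro--Pulvirenti for a \emph{fixed finite} number $k$ of vortices: for a.e. initial configuration (under $k$-fold Lebesgue measure) the system has a unique global smooth solution, and the flow preserves the product Lebesgue measure on $(\T^2)^k$.

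\emph{First} I would prove this finite time-dependent statement. Global existence and uniqueness away from collisions follows from the Cauchy--Lipschitz theorem on the open set where all $x_i$ are distinct, since $K$ is smooth off the diagonal; the content is ruling out collisions (and escapes to the singular set) for a.e. initial datum. The classical argument of \cite{marchioro} controls the mutual distances via the conserved-energy/Hamiltonian structure and a measure-theoretic estimate showing the bad set of initial data leading to collisions is Lebesgue-null. The exponential damping of intensities does not disrupt this: the bound $|K(x,y)|\leq C/d(x,y)$ is unchanged, and the time-dependent factors $e^{-\theta(s-t_j)}$ only shrink the intensities, so any a priori estimate on collision times in the autonomous case transfers with the same (or better) constants. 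Measure preservation is the key algebraic point: the vector field $(x_i)\mapsto(\sum_{j\neq i}\xi_j K(x_i,x_j))_i$ is divergence-free in the $x_i$ variables for \emph{each fixed} time $s$, because $K=\nabla^\perp G$ and $\nabla\cdot\nabla^\perp=0$, so Liouville's theorem gives volume preservation of the (nonautonomous) flow. Crucially, the full-measure ``good set'' can be taken independent of the intensities $\xi_{i,0}\in[-1,1]$ and of the configuration of jump times, since the collision-avoidance estimates depend on the intensities only through the uniform bound $|\xi|\leq 1$.

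\emph{Second} I would assemble the infinite system by induction over jump times. On $[0,t_1)$ only the initial vortices $x_{1,0},\dots,x_{n,0}$ are present and evolve by the finite system just treated; this produces positions at time $t_1^-$, at which moment a new vortex is inserted at a fresh independent uniform point $x_{n+1,0}$. By preservation of product Lebesgue measure under the flow, the configuration just before \emph{and} just after each insertion remains distributed as a product of uniforms and, more importantly, stays in the full-measure good set; inserting an independent uniform point keeps the configuration in the good set for the next finite system almost surely. Iterating, and using local finiteness of the $t_i$ (only finitely many births in any $[0,T]$), gives a unique piecewise-smooth, \emph{cadlag} global solution, with the $\one_{t_i\leq t}$ indicators in (\ref{posdyn}) encoding that vortex $i$ is frozen at its birthplace until time $t_i$. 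Patching the full-measure sets requires care: one must intersect countably many good sets (one per segment), but since each has full measure and the construction is measurable in the data, the intersection is still full measure and, by the uniformity noted above, independent of the choice of $t_i$ and $\xi_{i,0}$. The final claim that the joint law of positions is the infinite product of Lebesgue measure at every time follows from measure preservation on each finite segment together with the independence of each inserted point.

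\emph{The main obstacle} I expect is the measure-theoretic collision-avoidance estimate, i.e. verifying that the Marchioro--Pulvirenti argument is genuinely robust to the time-dependent intensities and admits a good set uniform in $(\xi_{i,0})$ and in the jump times. The autonomous proof leans on conserved quantities (the vortex Hamiltonian and moment of inertia) that are no longer exactly conserved once the intensities decay in time; I would instead seek \emph{differential inequalities} for the relevant distances and inverse-distances that hold with constants depending only on $\sup|\xi|\leq 1$, so that collisions are excluded on any finite time horizon for a null-exceptional set of data. Making this quantitative bound independent of the (arbitrary locally finite) jump-time configuration is the delicate bookkeeping step, since the number of active vortices grows and the vector field changes at each $t_i$; the saving grace is that on $[0,T]$ the number of vortices is finite and the damping only helps.
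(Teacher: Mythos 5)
There is a genuine gap, and it sits exactly where you flag it yourself: the entire mathematical content of this proposition is the collision-avoidance estimate for the finite system on $[0,T]$, and your proposal does not prove it — it defers it to a hoped-for family of ``differential inequalities for the relevant distances and inverse-distances.'' Moreover, your stated worry (that the argument of \cite{marchioro} leans on exact conservation of the vortex Hamiltonian, which damping destroys) misreads the classical mechanism. The Marchioro--Pulvirenti argument, both in the autonomous case and in the paper's adaptation, does \emph{not} use conservation of any quantity. It proceeds by regularization: one replaces $G$ by a smoothed $G_\delta$ agreeing with $G$ off a $\delta$-neighbourhood of the diagonal, so the smoothed system is globally well-posed by Cauchy--Lipschitz; one then introduces the (non-conserved) Lyapunov functional $L_\delta(t)=-\sum_{i\neq j}G_\delta(x^\delta_{i,t},x^\delta_{j,t})$ and bounds $\frac{d}{dt}L_\delta$ by a triple sum of terms $\nabla G_\delta(x_i,x_j)\cdot\nabla^\perp G_\delta(x_i,x_k)$ with coefficients $\tilde a_{ijk}(t)$: the equal-index terms vanish by the orthogonality $\nabla G_\delta\cdot\nabla^\perp G_\delta=0$, and the damping enters \emph{only} through the bound $|\tilde a_{ijk}(t)|\leq 1$, which is why the time-dependent intensities are harmless. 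The expectation over initial data of $\sup_{t\leq T}L_\delta(t)$ is then bounded uniformly in $\delta$ by combining the invariance of product Lebesgue measure under the smoothed flow (Liouville, which you do identify) with the integrability of $|x-y|^{-1}|x-z|^{-1}$ on $\T^{2\times 3}$. \v{C}eby\v{s}\"ev's inequality converts this into $\PP(\min_{t\le T}\min_{i\neq j}d(x^\delta_{i,t},x^\delta_{j,t})<\delta)\leq C(-\log\delta)^{-1}$, and on the complementary event the smoothed flow coincides with the true one; letting $\delta\to 0$ and taking a union over $T$ finishes. Without this (or an equivalent quantitative estimate), your step one is an assertion, not a proof, and nothing in your proposal supplies a substitute.

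A secondary structural remark: the paper never performs your induction over jump times. Because equations (\ref{posdyn}) and (\ref{intdyn}) carry the indicator $\one_{t_i\leq t}$, all vortices — born or not — are present from time $0$ (the unborn ones frozen at their i.i.d.\ uniform positions), so the regularization argument runs once over the whole of $[0,T]$, and measure preservation of the joint law falls out of the same Liouville computation. Your segment-wise assembly (evolve, insert an independent uniform point, invoke Fubini to stay in the good set) is a workable alternative \emph{given} the finite-segment theorem, but it adds bookkeeping — in particular your claim that a single full-measure good set can be chosen uniformly over the continuum of intensity vectors and jump-time configurations is asserted, not derived, and does not follow merely from a measure bound that is uniform in those parameters (an uncountable union of null sets need not be null; one needs the event itself, as in the paper's $\Omega_{\delta,T}$ construction, to be controlled). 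So even the part of your argument that is structurally sound rests on the unproven core.
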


We use the hypothesis that the jump times $t_i$ are locally finite (there are only finitely many of them in every compact $[0,T]$)
so to reduce ourselves to a system of finitely many vortices.
In fact, we repeat the proof of \cite{marchioro} adapting it to our context.
The issue is the possibility of collapsing vortices, which is ruled out as follows.
We define an approximating system with interaction kernel smoothed in a ball around 0:
the smooth interaction readily gives well-posedness of the approximants,
on which we evaluate a Lyapunov functional measuring how close the vortices can get.
Bounding the Lyapunov function then ensures that as the regularisation parameter
goes to 0, the approximant vortices in fact perform the same motion prescribed by
the non-smoothed equation.

\begin{proof}
	Let $\delta>0$, and consider smooth functions $G_\delta$ coinciding with $G$ outside the fattened diagonal 
	$\set{(x,y)\in D^2:d(x,y)<\delta}$ ($d$ being the distance on the torus $\T^2$), and such that
	\begin{equation}\label{defgdelta}
	|G_\delta(x,y)|\leq C |G(x,y)|, \quad |\nabla G_\delta(x,y)|\leq \frac{C}{d(x,y)} \quad \forall x,y\in D.
	\end{equation}
	Note in particular that the latter inequality was already true for $G$. Let us first restrict ourselves to
	a time interval $[0,T]$: in particular, we can consider only the finitely many vortices with $t_i\leq T$,
	let them be $x_1,\dots,x_n$.
	The system with smoothed interaction kernel $K_\delta=\nabla^\perp G_\delta$ has a unique, 
	global in time, smooth solution thanks to Cauchy-Lipschitz theorem:
	let $x_{i,t}^\delta$ denote the solution (note that smoothing $K$ does not effect the evolution
	of the intensities $\xi_{i,t}$). 
	
	Because of the Hamiltonian structure of the equations, that is, since $K_\delta=\nabla^\perp G_\delta$,
	it holds $\div \dot x^\delta_{i,t}=0$. This implies the invariance of product Lebesgue measure:
	for any $f\in C^1(D^n)$,
	\begin{align*}
	\frac{d}{dt} &\int_{D^n} f(x^\delta_{1,t},\dots,x^\delta_{n,t})dx_{1,0}\dots dx_{n,0}\\
	&=\sum_{i=1}^n \int_{D^n} \nabla_i f(x^\delta_{1,t},\dots,x^\delta_{n,t}) 
	\dot x^\delta_{i,t}  dx_{1,0}\dots dx_{n,0}\\
	&=-\int_{D^n} f(x^\delta_{1,t},\dots,x^\delta_{n,t}) \div(\dot x^\delta_{i,t}) dx_{1,0}\dots dx_{n,0}=0.
	\end{align*}
	
	Let us now introduce a Lyapunov function measuring how close the existing vortices are by means of $G_\delta$:
	\begin{equation*}
	L_\delta(t)=L_\delta(t,x^\delta_{1,t},\dots,x^\delta_{n,t})=-\sum_{i\neq j: t_i,t_j\leq t} G_\delta(x^\delta_{i,t},x^\delta_{j,t}).
	\end{equation*}
	By replacing $G_\delta$ with $G_\delta-k$ for a large enough $k>0$ in the definition of $L_\delta$ we can assume that
	$L_\delta$ is nonnegative.
	Observe that, because of (\ref{defgdelta}), $\int_{D^n} L_\delta(0)dx_1,\dots dx_n\leq C$ for a constant $C$ independent of $\delta$.
	Upon differentiating, and keeping in mind that
	\begin{equation*}
	\dot x^\delta_{i,t}=\one_{t_i\leq t}\sum_{j\neq i:t_j< t} \xi_{j,t}\nabla^\perp G_\delta(x^\delta_{i,t},x^\delta_{j,t}),
	\end{equation*}
	we get
	\begin{align*}
	\frac{d}{dt} L_\delta(t) 
	&=-\sum_{i\neq j: t_i,t_j\leq t} \nabla G_\delta(x^\delta_{i,t},x^\delta_{j,t})
	\cdot(\dot x^\delta_{i,t}+\dot x^\delta_{j,t})\\
	&=\sum_{i,j,k\leq n} \tilde a_{ijk}(t) \nabla G_\delta(x^\delta_{i,t},x^\delta_{j,t})\cdot \nabla^\perp G_\delta(x^\delta_{i,t},x^\delta_{k,t}),
	\end{align*}
	where $\tilde a_{ijk}(t)$ depend on time $t$ as functions of the intensities $\xi_{i,t}$, $\tilde a_{ijk}=0$ whenever two indices are equal,
	since $\nabla G_\delta(x^\delta_{i,t}-x^\delta_{j,t})\cdot \nabla^\perp G_\delta(x^\delta_{i,t}-x^\delta_{j,t})=0$
	and it always holds $|\tilde a_{ijk}(t)|\leq 1$. We can use this to prove the following integral bound on $L_\delta$:
	denoting by $dx^n$ the $n$-fold Lebesgue measure of the distribution of initial position,
	\begin{align*}
	\int_{D^n} &\sup_{t\in[0,T]} L_\delta(t) dx^n\leq \int_{D^n} L_\delta(0) dx^n \\
	&\quad+\sum_{i,j,k} \int_0^T \int_{D^n} \abs{\nabla G_\delta(x^\delta_{i,s},x^\delta_{j,s})
		\cdot \nabla^\perp G_\delta(x^\delta_{i,s},x^\delta_{k,s})} dx^n ds\\
	&\leq \int_{D^n} L_\delta(0) dx^n+T C_n \int_{D^3} \abs{\nabla G_\delta(x,y)\cdot \nabla^\perp G_\delta(x,z)}dxdydz\\
	&\leq C_T,
	\end{align*}
	$C_T$ being a constant depending only on $T$ ($n$ depends on $T$). 
	Note that in the second inequality we have used the invariance of Lebesgue's measure.
	The last passage follows from the aforementioned integrability
	of $L_\delta(0)$ and the fact that, because of (\ref{defgdelta}), the integrands in the second term are bounded by
	\[\abs{\nabla G_\delta(x-y)\cdot \nabla^\perp G_\delta(x-z)}\leq \frac{C}{|x-y||x-z|}.\]
	With these estimates at hand, we can now pass to the limit as $\delta\rightarrow 0$: let
	\begin{equation*}
	d_{\delta,T}(x^n)=\min_{t\in[0,T]}\min_{i\neq j} d(x^\delta_{i,t}-x^\delta_{j,t}),
	\end{equation*}
	so that
	\begin{equation*}
	d_{\delta,T}(x^n)<\delta\Rightarrow \sup_{t\in[0,T]} L_\delta(t)>-C \log(\delta),
	\end{equation*}
	since when two points $x,y$ are closer than $\delta$, $G_\delta(x,y)\geq C \log(\delta)$ for some universal constant $C$.
	As a consequence, by \v{C}eby\v{s}\"ev's inequality,
	\begin{equation*}
	\PP(\Omega_{\delta,T}):=\PP(d_{\delta,T}(x^n)<\delta)\leq C'(-\log \delta)^{-1}.
	\end{equation*}
	By construction, in the event $\Omega^c_{\delta,T}$ the solution $x^\delta_{i,t}$ is in fact a solution of
	the original system in $[0,T]$. Hence, the thesis holds if the event
	\begin{equation*}
	\bar\Omega=\bigcup_{T>0} \bigcap_{\delta>0}\Omega_{\delta,T}
	\end{equation*}
	is negligible. But this is true: $\Omega_{\delta,T}$ is monotone in its arguments,
	so that the intersection in $\delta$ is negligible because of the above estimates,
	hence the increasing union in $T$ must be negligible too.
\end{proof}

The forthcoming Corollary is a direct consequence of Proposition \ref{timedepmarchioro}:
indeed to complete our construction we only need to randomise the jump times and intensities so that
the initial conditions and driving noise have the correct distribution.
Assume that
\begin{itemize}
	\item $(x_{1,0},\xi_{1,0}),\dots (x_{n,0},\xi_{n,0})$ are positions and intensities of vortices sampled under $\Xi_M^{\theta,\lambda}$,
	\item $(t_{n+m}, x_{n+m,0},\xi_{n+m,0}=\sigma_{n+m})_{m\geq 1}$ is sampled under $\pi^\lambda$,
\end{itemize}
both in the sense of \autoref{rmk:vorticesmeasures}, with variables defined on a probability space $(\Omega,\F,\PP)$.
Then there exists a piecewise smooth, \emph{cadlag} solution of the system of equations (\ref{posdyn}) and (\ref{intdyn}) for all $t\in[0,\infty)$, 
$\PP$-almost surely. Moreover, the positions of vortices at any time $t$, $x_{i,t}$, are i.i.d. uniform variables on the torus $\T^2$.

\begin{cor}\label{cor:finitevortices}
	In the outlined setting, the process $\omega_{M,t}=\sum_{i:t_i\leq t}\xi_{i,t} \delta_{x_{i,t}}$
	is a $\M$-valued \emph{cadlag} Markov process with fixed time marginals $\omega_{M,t}\sim\Xi^{\theta,\lambda}_{M+t}$
	for all $t\geq 0$. It is a strong solution of
	\begin{equation*}
	d \omega_M=-\theta \omega_M dt+(K\ast \omega_M)\cdot \nabla \omega_M dt+ d\Sigma^{\lambda}_t,
	\end{equation*}
	in the sense of \autoref{defi:modelsolution}
\end{cor}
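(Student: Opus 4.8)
The plan is to build everything on \autoref{timedepmarchioro}, which already furnishes, for each realisation of the jump times $t_i$ and intensities $\xi_{i,0}$, a unique global cadlag solution of (\ref{posdyn})--(\ref{intdyn}) on a full-measure set of initial positions that does not depend on that data, and which keeps the positions i.i.d.\ uniform at every time. It then remains to (i) check that $\omega_{M,t}$ is a cadlag Markov process of the required regularity, (ii) identify its fixed-time law with $\Xi^{\theta,\lambda}_{M+t}$, and (iii) verify the weak formulation. Step (i) is essentially immediate: on each $[0,T]$ the process is almost surely a finite sum of atoms with intensities bounded by $1$, hence $\M$-valued and uniformly bounded in $H^{-1-\delta}$ (so the $L^2$-in-time part of (\ref{regularityassumption}) holds); between consecutive jumps $t\mapsto\omega_{M,t}$ moves continuously in $H^{-1-\delta}$ since positions flow smoothly and intensities decay, while at each $t_i$ it jumps by the single atom $\sigma_i\delta_{x_i}$, giving the cadlag property (indeed into $H^{-1-\delta}$, better than required). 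The process is $\F_t$-adapted by construction, hence predictable up to its left-continuous modification, and $\Sigma^\lambda$ remains an $\F_t$-martingale because the initial data is independent. For the Markov property I would argue that, conditionally on $\F_t$, the future depends on the present only through the current configuration (via the deterministic flow), while future jumps of $\Sigma^\lambda$ are independent of $\F_t$; since the positions are a.s.\ distinct (i.i.d.\ uniform at each time and collision-free along the flow by \autoref{timedepmarchioro}), the atomic measure $\omega_{M,t}$ recovers the state $(x_{i,t},\xi_{i,t})$ and is itself a legitimate Markov state.

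For (ii) I would track the intensities in $\omega_{M,t}=\sum_i\xi_{i,t}\delta_{x_{i,t}}$. An initial vortex has $t_i=0$, so by (\ref{intdyn}) $\xi_{i,t}=e^{-\theta t}\xi_{i,0}=\sigma_i e^{-\theta(s_i+t)}$ with $s_i\in[0,M]$ its internal birth time under $\Xi^{\theta,\lambda}_M$; setting $u_i:=s_i+t\in[t,M+t]$ gives $\sigma_i e^{-\theta u_i}$. A noise-created vortex at $t_i\in(0,t]$ has $\xi_{i,t}=\sigma_i e^{-\theta(t-t_i)}$, and $u_i:=t-t_i\in[0,t)$ again gives $\sigma_i e^{-\theta u_i}$. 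Shift-invariance of the intensity-$\lambda$ Poisson process sends the $s_i$ to a Poisson process on $[t,M+t]$, and reflection-invariance sends the $t_i$ to a Poisson process on $[0,t]$; as initial data and driving noise are independent, the merged $\{u_i\}$ form a Poisson process of intensity $\lambda$ on $[0,M+t]$ carrying i.i.d.\ $\pm1$ signs. The decisive point is that the positions $x_{i,t}$ are i.i.d.\ uniform \emph{and independent of} $(u_i,\sigma_i)$: by \autoref{timedepmarchioro} the conditional law of $(x_{i,t})_i$ given the jump times and intensities is product-uniform for \emph{every} choice of that data, which upgrades to unconditional product-uniformity and independence from it. Comparing with the description in \autoref{rmk:vorticesmeasures} and the Laplace transform (\ref{laplacexi}), this is exactly a sample of $\Xi^{\theta,\lambda}_{M+t}$.

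For (iii) I would fix $f\in C^\infty(\T^2)$ and differentiate $\brak{f,\omega_{M,t}}=\sum_i\xi_{i,t}f(x_{i,t})$ on a jump-free interval. The intensity term gives $\sum_i\dot\xi_{i,t}f(x_{i,t})=-\theta\brak{f,\omega_{M,t}}$, and the transport term is $\sum_{i\neq j}\xi_{i,t}\xi_{j,t}\nabla f(x_{i,t})\cdot K(x_{i,t},x_{j,t})$; symmetrising in $(i,j)$ and using $K(y,x)=-K(x,y)$ turns it into $\sum_{i\neq j}\xi_{i,t}\xi_{j,t}H_f(x_{i,t},x_{j,t})=\wick{\brak{H_f,\omega_{M,t}\otimes\omega_{M,t}}}$. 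At each jump time the left side increases by $\sigma_i f(x_i)$, which is precisely the increment of $\brak{f,\Sigma^\lambda_t}$, so summing the between-jump integrals and the jumps yields (\ref{weakmodelsimple}), equivalent to (\ref{weakmodel}) by \autoref{rmk:integralformulation}. Since all objects live on the given space and are adapted to $\Sigma^\lambda$, this is a \emph{strong} solution in the sense of \autoref{defi:modelsolution}.

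The main obstacle I anticipate is the independence in step (ii): it is tempting but false to treat the time-$t$ positions as a ``free'' i.i.d.\ uniform family, since they evolve under a flow driven by the very intensities from which one wants them independent. The resolution is exactly the uniformity clause of \autoref{timedepmarchioro} — the product-Lebesgue law holding for \emph{every} fixed choice of $t_i,\xi_{i,0}$ — which converts a conditional statement into genuine independence; the remaining steps are bookkeeping of the Poisson structure and an elementary piecewise-deterministic computation.
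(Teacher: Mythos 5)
Your proposal is correct and takes essentially the same approach as the paper: both build on \autoref{timedepmarchioro}, obtain the Markov/cadlag structure from the deterministic flow plus independent noise increments, identify the marginal law by merging the shifted initial Poisson process and the time-reversed noise process into a single Poisson process on $[0,M+t]$ (the paper runs this argument inside a characteristic-function computation conditioned on the jump times), and verify the equation through the same symmetrization of the transport term using the antisymmetry of $K$. The only cosmetic difference is that you differentiate piecewise between jumps and add the jump contributions to reach (\ref{weakmodelsimple}), while the paper substitutes (\ref{posdyn}) into the pairing and arrives at the variation-of-constants form (\ref{weakmodel}) directly; these are equivalent by \autoref{rmk:integralformulation}.
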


\begin{proof}
	Fix $s<t$: by construction, given the positions $x_{i,0}$, the initial intensities $\xi_{i,0}$ and the jump times $t_i$
	(in a $\PP$-full measure event), $\omega_{M,t}$ is given by a deterministic function of $(x_{i,s},\xi_{i,s})_{i:t_i< s}$
	and $(t_i,x_{i,0},\xi_{i,0})_{i:s\leq t_i< t}$. As a consequence, $\omega_{M,t}$ is a function of $\omega_{M,s}$ and of
	the driving noise $(\Sigma^\lambda_r)_{s\leq r< t}$, which is independent from $\omega_{M,s}$: this implies
	the Markov property.
	Since the trajectories of positions $x_{i,t}$ and the evolution of intensities $\xi_{i,t}$ are smooth in time,
	$\omega_{M,t}$ is also smooth in time, save for the jump times $t_i$ when a new Dirac's delta is added.
	
	As for the marginal distributions, let us first evaluate:
	\begin{align*}
	\expt{e^{\imm\alpha \brak{\omega_{M,t},f}}} &= \expt{\exp\pa{\imm\alpha \sum_{i:t_i\leq t} \xi_{i,t} f(x_{i,t})}}\\
	&=\expt{\expt{\left. \exp\pa{\imm\alpha \sum_{i:t_i\leq t} \xi_{i,t} f(x_{i,t})}\right| (t_i)_{i\geq 0}}}\\
	&=\expt{\prod_{i:t_i\leq t} \int_D e^{\imm\alpha \xi_{i,t} f(x)} dx}=:\expt{\prod_{i:t_i\leq t} F(\xi_{i,t})}.
	\end{align*}
	Using the definition of $\xi_{i,t}$, and distinguishing the cases $i\leq n$ and $i>n$
	(which correspond to two independent groups of random variables), we can write
	\begin{align*}
	\expt{e^{\imm\alpha \brak{\omega_{M,t},f}}}&= \EE_N\bra{\prod_{s_i\in [0,M]}F(e^{-\theta s_i})}\cdot
	\EE_N\bra{\prod_{s_i\in [0,t]}F(e^{-\theta (t-s_i)})}\\
	&=\EE_N\bra{\prod_{s_i\in [0,M+t]}F(e^{-\theta s_i})}
	\end{align*}
	where $N$ is a Poisson point process of parameter $\lambda$ on $\R$ whose points are denoted by $s_i$,
	and the second passage follows from the fact that the points $N$ in disjoint intervals are independent and their
	distribution does not change if we reverse the parametrisation of the interval.
	Comparing to the characteristic function of $\Xi_{M+t}$ given in (\ref{laplacexi}), we conclude that $\omega_{M,t}\sim\Xi^{\theta,\lambda}_{M+t}$.
	
	Observe now that in this case it holds, for any $f\in C^\infty(\T^2)$, $\PP$-almost surely for all $t\geq 0$,
	\begin{equation*}
	:\brak{H_f,\omega_{M,t}\otimes\omega_{M,t}}:=\sum_{\substack{i,j:t_i,t_j\leq t\\i\neq j}} 
	\xi_{i,t}\xi_{j,t} H_f(x_{i,t},x_{j,t}),
	\end{equation*}
	(\emph{cf.} with \autoref{ssec:randomvariables}). Given this, it is straightforward to show that we do have built
	solutions of (\ref{weakmodel}): for $f\in C^\infty(\T^2)$, by (\ref{posdyn}) and (\ref{intdyn}),
	\begin{align*}
	\brak{f,\omega_{M,t}}&=\sum_{i:t_i\leq t} \xi_{i,t} f(x_{i,t})\\
	&=\sum_{i:t_i\leq t} \xi_{i,t} \pa{f(x_{i,t_i})+\int_{t_i}^t \sum_{j\neq i:t_j\leq s}\xi_{j,s} \nabla f(x_{i,s})\cdot  K(x_{i,s},x_{j,s})ds}\\ 	
	&=\pa{\sum_{i=1}^n+\sum_{i>n:t_i\leq t}}\xi_{i,t} f(x_{i,t_i})+\sum_{i:t_i\leq t} \xi_{i,t}
	\int_{t_i}^t \sum_{j\neq i:t_j\leq s}\xi_{j,s} \nabla f(x_{i,s})\cdot  K(x_{i,s},x_{j,s})ds\\
	&=\sum_{i=1}^n e^{-\theta t}f(x_{i,0})+\sum_{i>n:t_i\leq t} e^{-\theta(t-t_i)}f(x_{i,t_i})\\
	&\quad + \int_0^t \sum_{\substack{i,j:t_i,t_j\leq s\\i\neq j}} 
	e^{-\theta(t-s)}\xi_{i,s}\xi_{j,s}\nabla f(x_{i,s})\cdot  K(x_{i,s},x_{j,s})ds\\
	&=e^{-\theta t}	\brak{f,\omega_{M,0}}+\int_0^t e^{-\theta(t-s)} \brak{f,d\Sigma_s}
	+\int_0^t :\brak{H_f,\omega_{M,s}\otimes\omega_{M,s}}: ds.
	\end{align*}
	The latter equation holds regardless of the choice of initial positions, intensities and jump times (as soon as the 
	dynamics is defined) so in particular it holds $\PP$-almost surely uniformly in $t$, and this concludes the proof.
\end{proof}

The method of \cite{marchioro} thus provides, quite remarkably, existence and pathwise uniqueness of measure-valued strong solutions.
Unfortunately, it only seems to apply to systems of finitely many vortices, since it relies on the very particular, 
discrete nature of the measures involved to control the ``diagonal collapse'' issue. 
Let us conclude this section noting that we have obtained the first piece of \autoref{thm:mainresult},
namely we have built solutions in the case (P) for all $M<\infty$.

%%%%%%%%%%%%%%%%%%%%%%%%%%%%%%%%%%%%%%%%%%%%%%%%%%%%%%%%%%%%%%%%%%%%%%%%%%%%%%%%%%%%%%%%%%%
\section{Proof of the Main Result}

In \autoref{sec:finitevortices} we built the point vortices processes
$\omega_{M,t}=\sum_{i:t_i\leq t}\xi_{i,t} \delta_{x_{i,t}}.$
Let us introduce the scaling in $N\geq 1$: we will denote
$\omega_{M,N,t}=\sum_{i:t_i\leq t}\frac{\xi_{i,t}}{\sqrt N} \delta_{x_{i,t}}$
where $x_{i,t},\xi_{i,t}$ solve equations (\ref{posdyn}) and (\ref{intdyn}), and where the $t_i$'s
are the jump times of a real valued Poisson process of intensity $N\lambda$.
In other words, by \autoref{cor:finitevortices}, $\omega_{M,N,t}$ is a strong solution of
\begin{equation}\label{eqomegamn}
d \omega_{M,N}=-\theta \omega_{M,N} dt+(K\ast \omega_{M,N})\cdot \nabla \omega_{M,N} dt+ \frac{1}{\sqrt N}d\Sigma^{N\lambda}_t,
\end{equation}
(in the sense of \autoref{defi:modelsolution}) with fixed time marginals
$\omega_{M,N,t}\sim \frac{1}{\sqrt N} \Xi_{M+t}^{\theta,N\lambda}$.
It is worth to note here that, by construction of $\omega_{M,N,t}$, its natural filtration $\F_t$
coincides with the one generated by the driving noise $\Sigma^{N\lambda}_t$ and the initial datum.

The forthcoming paragraphs deal with, respectively: a recollection of some compactness criterions,
the bounds proving that the laws of $\omega_{M,N}$ are tight,
the proof of the fact that limit points of our family of processes are indeed solutions 
in the sense of \autoref{defi:modelsolution}, that is, the main result.

%%%%%%%%%%%%%%%%%%%%%%%%%%%%%%%%%%%%%%%%%%%%%%%%%%%%%%%%%%%%%%%%%%%%%%%%%%%%%%%%%%%%%%%%
\subsection{Compactness Results}\label{ssec:simon}

Let us first review a deterministic compactness criterion due to Simon
(we refer to \cite{simon} for the result and the required generalities on Banach-valued Sobolev spaces).

\begin{prop}[Simon]
	Assume that
	\begin{itemize}
		\item $X\hookrightarrow B\hookrightarrow Y$ are Banach spaces such that the embedding $X\hookrightarrow Y$
		is compact and there exists $0<\theta<1$ such that for all $v\in X\cap Y$
		\begin{equation*}
		\norm{v}_B\leq M\norm{v}_X^{1-\theta} \norm{v}_Y^\theta;
		\end{equation*}
		\item $s_0,s_1\in\R$ are such that $s_\theta=(1-\theta)s_0+\theta s>0$.
	\end{itemize}
	If $\F \subset W$ is a bounded family in
	\begin{equation*}
	W=W^{s_0,r_0}([0,T],X)\cap W^{s_1,r_1}([0,T],Y)
	\end{equation*}
	with $r_0,r_1\in[0,\infty]$, and we define
	\begin{equation*}
	\frac{1}{r_\theta}=\frac{1-\theta}{r_0}+\frac{\theta}{r_1}, \qquad s_*=s_\theta-\frac{1}{r_\theta},
	\end{equation*}
	then if $s_*\leq 0$, $\F$ is relatively compact in $L^p([0,T],B)$ for all $p<-\frac{1}{s_*}$.
	In the case $s_*>0$, $\F$ is moreover relatively compact in $C([0,T],B)$.
\end{prop}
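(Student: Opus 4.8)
The plan is to decouple the statement into two independent pieces of information—temporal equicontinuity and spatial tightness—and then conclude by a Kolmogorov--Riesz--Fréchet criterion (respectively an Arzelà--Ascoli argument) in the Banach-valued setting. First I would record the elementary but decisive consequence of the multiplicative inequality: applying Young's inequality to $\norm{v}_B\leq M\norm{v}_X^{1-\theta}\norm{v}_Y^\theta$ yields, for every $\eta>0$, a constant $C_\eta$ with $\norm{v}_B\leq \eta\norm{v}_X+C_\eta\norm{v}_Y$. Combined with the compactness of $X\hookrightarrow Y$, this upgrades the \emph{continuous} embedding $X\hookrightarrow B$ to a \emph{compact} one: from an $X$-bounded sequence extract a $Y$-convergent subsequence and apply the multiplicative inequality to the differences to see it is $B$-Cauchy. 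This compactness of $X\hookrightarrow B$ is what will manufacture spatial precompactness later on.

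Second, I would transfer the regularity of $\F$ into the intermediate space $B$ by interpolating the two time-Sobolev norms. The point is that, although $s_0$ and $s_1$ may individually be nonpositive, the weighted exponent $s_\theta$ is positive, so $B$ is precisely the space in which genuine temporal smoothness survives. Writing the Gagliardo seminorm of $v$ as a $B$-valued double integral, bounding its integrand pointwise by $M\norm{v(t)-v(\tau)}_X^{1-\theta}\norm{v(t)-v(\tau)}_Y^\theta$, and applying Hölder with the conjugate pair $\tfrac{r_0}{(1-\theta)r_\theta}$, $\tfrac{r_1}{\theta r_\theta}$ (which is exactly the choice encoded in the definitions of $r_\theta$ and $s_\theta$) shows that $\F$ is bounded in $W^{s_\theta,r_\theta}([0,T],B)$; when $s_\theta\geq 1$ one argues on honest derivatives rather than difference quotients, the fractional case $0<s_\theta<1$ being representative. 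From this bound the fractional Sobolev embedding in time gives uniform control of the $B$-translations $\norm{\tau_h v-v}_{L^{r_\theta}([0,T-h],B)}\lesssim h^{s_\theta}$, and the sharp exponent is read off from $\tfrac1p=\tfrac1{r_\theta}-s_\theta=-s_*$ (with Morrey endpoint $C^{0,s_*}$ when $s_*>0$).

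Finally I would assemble the two ingredients. For $s_*\leq 0$ I apply the Kolmogorov--Riesz--Fréchet criterion in $L^p([0,T],B)$: the translation equicontinuity is the estimate just obtained, while tightness follows because the local averages $\int_a^b v\,dt$ are bounded in $X$—$\F$ being bounded in $L^{r_0}([0,T],X)$—hence relatively compact in $B$ by the compact embedding $X\hookrightarrow B$ established in the first step. For $s_*>0$ the embedding $W^{s_\theta,r_\theta}\hookrightarrow C^{0,s_*}$ gives $B$-equicontinuity of the trajectories directly, and pointwise precompactness in $B$ follows by approximating each value $v(t)$ by its averages (relatively compact in $B$ as above), with error controlled uniformly by the equicontinuity, so Arzelà--Ascoli applies in $C([0,T],B)$.

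I expect the main obstacle to be precisely the interplay exploited in the second step: since $s_0$ and $s_1$ need not be positive, one cannot extract time-translation control in $X$ or in $Y$ separately, and all the temporal regularity must be produced \emph{inside} $B$ through the interpolation inequality, where $s_\theta>0$. Making the Gagliardo-seminorm Hölder estimate rigorous across the ranges $0<s_\theta<1$, $s_\theta\geq 1$ and at the borderline $p=-1/s_*$, and checking that the relevant Banach-valued fractional Sobolev embedding is genuinely \emph{compact}—which fails without the compactness of $X\hookrightarrow B$, as $B$ alone carries no compactness—rather than merely continuous, is where the real work lies; the remaining steps are standard once these are in place. A fully detailed treatment is carried out in \cite{simon}.
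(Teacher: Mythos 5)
The paper does not actually prove this proposition: it is recalled verbatim from Simon's compactness paper \cite{simon}, so the only meaningful comparison is with Simon's original argument. Your sketch reconstructs its architecture essentially correctly: the Ehrling-type consequence $\norm{v}_B\leq \eta\norm{v}_X+C_\eta\norm{v}_Y$ of the interpolation inequality, which upgrades $X\hookrightarrow B$ to a compact embedding; the interpolation of the two time-Sobolev bounds into a bound in $W^{s_\theta,r_\theta}([0,T],B)$, where alone the order is positive; and the Banach-valued translation criterion (Simon's characterisation of compact sets of $L^p([0,T],B)$, which is exactly the Kolmogorov--Riesz--Fr\'echet/Arzel\`a--Ascoli dichotomy you invoke, with ``tightness'' expressed through compactness of the averages $\int_a^b v\,dt$ in $B$).

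There is, however, one genuine gap, and it sits precisely where you locate the ``real work''. For the spatial half of the criterion you assert that the averages $\int_a^b v\,dt$ are bounded in $X$ because ``$\F$ is bounded in $L^{r_0}([0,T],X)$''. But $W^{s_0,r_0}([0,T],X)\hookrightarrow L^{r_0}([0,T],X)$ only when $s_0\geq 0$, whereas the statement allows arbitrary $s_0\in\R$ with only $s_\theta>0$ --- indeed you yourself stress, one paragraph earlier, that $s_0$ and $s_1$ may be nonpositive and that no translation control is available in $X$ or $Y$ separately; the same tension applies to the averages. When $s_0<0$ the elements of $\F$ are merely distributions in time and their averages need not lie in any bounded subset of $X$, so both your Kolmogorov--Riesz--Fr\'echet step and your Arzel\`a--Ascoli step lose their compactness input. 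The standard repair (and Simon's) is to mollify in time: for a fixed mollification parameter the regularised family \emph{is} bounded in $X$ pointwise, hence precompact in $B$ by your first step, while the regularisation error is controlled uniformly in the $L^p(B)$ norm by the $B$-translation estimate coming from the interpolation step; total boundedness of $\F$ then follows. For the application made in this paper ($s_0=0$, $s_1=1/2-\gamma>0$) your argument is fine as written, but as a proof of the proposition in the stated generality it does not close.
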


Let us specialise this result to our framework. Take
\begin{equation*}
X=H^{-1-\delta}(D), \quad B=H^{-1-\delta}(D), \quad Y=H^{-3-\delta}(D),
\end{equation*}
with $\delta>0$: by Gagliardo-Niremberg estimates the interpolation inequality is satisfied with $\theta=\delta/2$.
Let us take moreover $s_0=0$, $s_1=1/2-\gamma$ with $\gamma>0$, $r_1=2$ and $r_0=q\geq1$, so that
the discriminating parameter is
\begin{equation*}
s_*=-\gamma\theta-\frac{1-\theta}{q}.
\end{equation*}
Note that as we take $\delta$ smaller and smaller, and $q$ bigger and bigger, we can get $s_*<0$ arbitrarily close to $0$, but not $0$. We have thus derived:

\begin{cor}\label{cor:simon}
	If the sequence
	\begin{equation*}
	\set{v_n}\subset L^p([0,T],H^{-1-\delta})\cap W^{1/2-\gamma,2}([0,T],H^{-3-\delta})
	\end{equation*}
	is bounded for any choice of $\delta>0$ and $p\geq 1$, and for some $\gamma>0$, then it is relatively compact in
	$L^q([0,T],H^{-1-\delta})$ for any $1\leq q<\infty$.
	As a consequence, if a sequence of stochastic processes $u^n:[0,T]\rightarrow H^{-1-\delta}$ defined on a probability space $(\Omega,\F,\PP)$ is such that, for any $\delta>0$, $p\geq1$ and some $\gamma>0$,
	there exists a constant $C_{\delta,\gamma,q}$ for which
	\begin{equation}\label{momentscondition}
	\sup_n \expt{\norm{u^n(t)}_{ L^p([0,T],H^{-1-\delta})}+\norm{u^n}_{W^{1/2-\gamma,1}([0,T],H^{-3-\delta})}}
	\leq C_{\delta,\gamma,p},
	\end{equation} 
	then the laws of $u_n$ on $L^q([0,T],H^{-1-\delta})$ are tight for any $1\leq q<\infty$.
\end{cor}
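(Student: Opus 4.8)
The plan is to read off both assertions of the Corollary from Simon's Proposition: the deterministic compactness statement is a direct specialisation of it, and the probabilistic one follows by the standard device of turning uniform moment bounds into tightness via Markov's inequality.

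First I would establish the deterministic claim. With $X=H^{-1-\delta}$ and $Y=H^{-3-\delta}$ the embedding $X\hookrightarrow Y$ is compact (in fact Hilbert--Schmidt, the orders differing by more than one), so the first hypothesis of Simon's Proposition holds; the intermediate space $B$ is produced by the Gagliardo--Niremberg interpolation inequality, which for these Sobolev spaces is the elementary Fourier-side H\"older estimate and yields the interpolation exponent $\theta=\delta/2$. Taking the temporal parameters $s_0=0$, $s_1=1/2-\gamma$, $r_0=p$, $r_1=2$ and substituting into Simon's formulae gives
\[
s_*=-\gamma\theta-\frac{1-\theta}{p}<0.
\]
The crucial feature is that the hypothesis supplies boundedness for \emph{every} $\delta>0$ and \emph{every} $p\geq 1$ at once: sending $\delta$ (hence $\theta$) to $0$ and $p$ to $\infty$ drives $s_*\to 0^-$, so that $-1/s_*\to\infty$. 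Hence, for any prescribed finite $q$ one tunes $\delta$ and $p$ so that $q<-1/s_*$, and Simon's Proposition yields relative compactness in $L^q([0,T],B)$, that is in $L^q([0,T],H^{-1-\delta})$ after relabelling the regularity index.

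For the probabilistic statement I would use the balls $K_R=\set{v:\norm{v}_W\leq R}$ of $W=L^p([0,T],H^{-1-\delta})\cap W^{1/2-\gamma,1}([0,T],H^{-3-\delta})$ as the candidate compact sets. Repeating the computation above with $r_1=1$ only replaces $s_*$ by $-\theta(1/2+\gamma)-(1-\theta)/p$, which is again negative and again approaches $0^-$, so each $K_R$ is relatively compact in $L^q([0,T],H^{-1-\delta})$ for every finite $q$. Markov's inequality together with the uniform bound (\ref{momentscondition}) gives
\[
\sup_n \prob{\norm{u^n}_W>R}\leq \frac{1}{R}\,\sup_n \expt{\norm{u^n}_W}\leq \frac{C_{\delta,\gamma,p}}{R},
\]
so for $\epsilon>0$ the choice $R=C_{\delta,\gamma,p}/\epsilon$ makes $\prob{u^n\in\overline{K_R}}\geq 1-\epsilon$ uniformly in $n$, with $\overline{K_R}$ (the $L^q$-closure) compact. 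This is exactly the tightness of the laws of $u^n$ on $L^q([0,T],H^{-1-\delta})$.

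The interpolation inequality and the Markov step are routine; the point demanding care is the bookkeeping of Simon's indices, namely verifying that the joint limit $\delta\downarrow 0$, $p\uparrow\infty$ pushes $s_*$ arbitrarily close to $0$ from below so that the entire range $1\leq q<\infty$ is reached, and confirming that replacing the exponent $r_1=2$ by $r_1=1$ (as forced by the moment condition) leaves this conclusion intact.
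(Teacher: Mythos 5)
Your proposal is correct and follows essentially the same route as the paper: the deterministic part is the same specialisation of Simon's criterion (spaces $H^{-1-\delta}\hookrightarrow H^{-3-\delta}$, temporal indices $s_0=0$, $s_1=1/2-\gamma$, interpolation exponent $\theta\approx\delta/2$, and the observation that $s_*<0$ can be driven arbitrarily close to $0$ by taking $\delta\to 0$ and $p\to\infty$), while the tightness step via Markov's inequality on balls of the intersection space is exactly the standard completion the paper leaves implicit. Your additional verification that replacing $r_1=2$ by $r_1=1$ (as forced by the $W^{1/2-\gamma,1}$ norm appearing in the moment condition) still yields $s_*=-\theta(1/2+\gamma)-(1-\theta)/p<0$ is a useful repair of a small exponent mismatch between the two halves of the paper's statement.
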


The processes we will consider are discontinuous in time: this is why we consider only fractional Sobolev
regularity in time. However, as we have just observed, this prevents us to use Simon's criterion to
prove any time regularity beyond $L^q$. This is why we will combine the latter result with a compactness criterion for 
\emph{cadlag} functions. We refer to \cite{metivier} for both the forthcoming result and the necessary
preliminaries on the space $\D([0,T], S)$ of \emph{cadlag} functions
taking values in a complete separable metric space $S$.

\begin{thm}[Aldous' Criterion]\label{thm:aldous}
	Consider a sequence of stochastic processes $u^n:[0,T]\rightarrow S$ defined on probability spaces
	$(\Omega^n,\F^n,\PP^n)$ and adapted to filtrations $\F^n_t$. The laws of $u^n$ are tight on $\D([0,T],S)$ if:
	\begin{enumerate}
		\item for any $t\in[0,T]$ (a dense subset suffices) the laws of the variables $u^n_t$ are tight;
		\item for all $\epsilon,\epsilon'>0$ there exists $R>0$ such that for any sequence of $\F^n$-stopping times
		$\tau_n\leq T$ it holds
		\begin{equation*}
		\sup_n \sup_{0\leq r\leq R} \PP^n\pa{d(u^n_{\tau_n},u^n_{\tau_n+r})\geq \epsilon'}\leq \epsilon.
		\end{equation*} 
	\end{enumerate}
\end{thm}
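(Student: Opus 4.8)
The statement is the classical Aldous tightness criterion, so the plan is to reduce it to Prokhorov's theorem together with the characterization of relatively compact subsets of the Skorokhod space $\D([0,T],S)$. Since $S$ is Polish, so is $\D([0,T],S)$ in the $J_1$ topology, and by Prokhorov tightness of the laws is equivalent to their relative compactness. A set $A\subset\D([0,T],S)$ is relatively compact precisely when the sections $\set{x(t):x\in A}$ are relatively compact in $S$ for $t$ in a dense set (together with $t=T$) and the Skorokhod modulus
\begin{equation*}
w'(x,\delta)=\inf_{\set{t_i}}\max_i \sup_{s,t\in[t_{i-1},t_i)} d(x(s),x(t)),
\end{equation*}
the infimum being over partitions of $[0,T]$ of mesh at least $\delta$, satisfies $\lim_{\delta\to0}\sup_{x\in A}w'(x,\delta)=0$. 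The section condition is immediate from hypothesis (1), which is exactly tightness of the one-dimensional marginals on a dense set, so the whole content of the theorem is to show that hypothesis (2) forces, uniformly in $n$ and with high probability, the modulus to be small.

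The heart of the argument, and the main obstacle, is therefore to deduce from the stopping-time hypothesis the estimate $\lim_{\delta\to0}\limsup_n \prob{w'(u^n,\delta)>\epsilon}=0$ for every $\epsilon>0$. I would prove this through Aldous' stopping-time construction: fix $\epsilon>0$ and define inductively the $\F^n_t$-stopping times $\tau^n_0=0$ and $\tau^n_{k+1}=\inf\set{t>\tau^n_k: d(u^n_t,u^n_{\tau^n_k})>\epsilon}\wedge T$. On the event that every increment $\tau^n_{k+1}-\tau^n_k$ falling before $T$ exceeds $\delta$, there are at most $\lceil T/\delta\rceil+1$ such times and the partition they generate witnesses $w'(u^n,\delta)\leq 2\epsilon$; so the modulus can be large only if two consecutive stopping times are $\delta$-close. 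The displacement $d(u^n_{\tau^n_k},u^n_{\tau^n_k+r})$ at such a coincidence necessarily exceeds $\epsilon$ for some $r<\delta$, so applying hypothesis (2) to the stopping-time sequence $\tau^n_k$ bounds the probability of the bad event. The delicate accounting is that the number of stopping times is itself random, so the naive union bound over $k$ is circular; this is resolved exactly as in Aldous' original contradiction argument, by supposing the modulus estimate fails along a subsequence, extracting from the failure a pair of stopping times $S_n\leq T_n\leq (S_n+\delta_n)\wedge T$ with $\delta_n\to0$ realizing a displacement $>\epsilon$ with probability bounded away from zero, and contradicting the uniform smallness granted by (2).

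Finally I would assemble the two pieces. Given $\eta>0$, hypothesis (1) selects compact sets $K_t\subset S$ capturing $u^n_t$ up to probability $\eta$ on a dense set, while the modulus estimate just proved selects $\delta$ with $\prob{w'(u^n,\delta)>\epsilon}\leq\eta$ for $n$ large; the intersection of the corresponding events is carried into a fixed relatively compact subset of $\D([0,T],S)$ and has probability at least $1-C\eta$, which is tightness. The subtle point throughout is the passage from the ordinary displacement $d(u^n_{\tau_n},u^n_{\tau_n+r})$ controlled by (2) to the Skorokhod modulus $w'$, which must tolerate finitely many genuine jumps: it is precisely the partition built from the $\tau^n_k$ that reconciles a condition stated in terms of stopping-time increments with a compactness criterion stated in terms of $w'$.
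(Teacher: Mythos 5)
The paper never proves this theorem: it is quoted verbatim from \cite{metivier} as a classical result, so there is no internal proof to compare against, and your proposal is an attempt at reconstructing the standard argument. Structurally your route is the correct textbook one (Prokhorov's theorem, the characterization of relatively compact subsets of $\D([0,T],S)$ via marginal sections on a dense set plus the modulus $w'$, and Aldous' inductively defined stopping times $\tau^n_{k+1}=\inf\set{t>\tau^n_k: d(u^n_t,u^n_{\tau^n_k})>\epsilon}\wedge T$, whose partition witnesses $w'(u^n,\delta)\le 2\epsilon$ on the good event). Up to that point the sketch is sound.

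However, the decisive step is missing, not merely deferred. Hypothesis (2) controls $d(u^n_{\tau_n},u^n_{\tau_n+r})$ only for a stopping time and a \emph{deterministic} lag $r\le R$, whereas your contradiction argument terminates with a pair of stopping times $S_n\le T_n\le (S_n+\delta_n)\wedge T$ and needs smallness of $\PP^n\pa{d(u^n_{S_n},u^n_{T_n})\ge\epsilon}$; since the lag $T_n-S_n$ is random, this is not an instance of (2), and the phrase ``contradicting the uniform smallness granted by (2)'' is a non sequitur as written. The bridge is Aldous' key lemma, proved by a Fubini averaging trick: for any $s\in[0,\delta]$ the triangle inequality gives
\begin{equation*}
\set{d(u_{S},u_{T'})\ge 2\epsilon}\subset \set{d(u_{S},u_{T'+s})\ge \epsilon}\cup\set{d(u_{T'},u_{T'+s})\ge \epsilon},
\end{equation*}
and integrating the probabilities over $s\in[0,\delta]$, the second term is handled directly by (2), while in the first the change of variable $v=(T'-S)+s\in[0,2\delta]$ (legitimate pathwise, then Fubini) converts it into displacements from $S$ with deterministic lags $v\le 2\delta$, again controlled by (2); this yields $\PP\pa{d(u_S,u_{T'})\ge 2\epsilon}\le 3\sup_{r\le 2\delta}\sup_{\tau}\PP\pa{d(u_\tau,u_{\tau+r})\ge\epsilon}$. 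A similar use of (2) is needed to bound, uniformly in $n$, the random number of times $\tau^n_k$ falling in $[0,T]$ before any union bound is available. These two lemmas are the actual mathematical content of the theorem; invoking ``exactly as in Aldous' original contradiction argument'' leaves the crux unproved, so as a blind proof the proposal has a genuine gap, albeit one fillable by standard material.
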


%%%%%%%%%%%%%%%%%%%%%%%%%%%%%%%%%%%%%%%%%%%%%%%%%%%%%%%%%%%%%%%%%%%%%%%%%%%%%
\subsection{Tightness of Point Vortices Processes}\label{ssec:tightness}

The following estimate on our Poissonian random measures is the crux in all the forthcoming bounds;
it is essentially a Poissonian analogue of the ones in Section 3 of \cite{flandoli}. 

\begin{prop}
	Let $\omega_{M,N}\sim \frac{1}{\sqrt N} \Xi_{M}^{\theta,N\lambda}$. For any $1\leq p<\infty$ 
	there exists a constant $C_p>0$ such that for any measurable bounded functions $h:\T^2\rightarrow \R$
	and $f:\T^{2\times 2}\rightarrow \R$ it holds 
	\begin{equation}\label{doubleintegralestimate}
	\expt{\brak{h,\omega_{M,N}}^{2p}}\leq C_p \norm{h}_\infty^{2p},
	\qquad \expt{\brak{f,\omega_{M,N}\otimes \omega_{M,N}}^p}\leq C_p \norm{f}_\infty^p,
	\end{equation}
	uniformly in $N\geq 0$ and $M\in[0,\infty]$. As a consequence, since for $\delta>0$ the Green function 
	$\Delta^{-1-\delta}$ is smooth,
	\begin{equation}\label{uniformbound}
	\expt{\norm{\omega_{M,N}}_{H^{-1-\delta}}^{2p}}=\expt{\brak{\Delta^{-1-\delta},\omega_{M,N}\otimes\omega_{M,N}}^p}\leq C_{p,\delta},
	\end{equation}
	uniformly in $M,N$.
\end{prop}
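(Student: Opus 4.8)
The plan is to compute the moments directly from the Poissonian structure, exploiting the explicit intensity measure together with the symmetry of the signs $\sigma_i$, which is exactly what will produce uniformity in $N$. Writing $\omega_{M,N}=\frac{1}{\sqrt N}\sum_{i:t_i\leq M}\sigma_i e^{-\theta t_i}\delta_{x_i}$ with the $(t_i,\sigma_i,x_i)$ sampled under a Poisson point process $\pi$ on $E=[0,M]\times\set{\pm1}\times\T^2$ of intensity $\mu=N\lambda\,dt\,\rho(d\sigma)\,dx$ (with $\rho$ the uniform law on $\pm1$), both quantities to be bounded are diagonal-inclusive polynomial integrals against $\pi$: setting $\phi(t,\sigma,x)=\sigma e^{-\theta t}h(x)$ and $\Phi((t,\sigma,x),(t',\sigma',x'))=\sigma\sigma' e^{-\theta(t+t')}f(x,x')$, one has $\brak{h,\omega_{M,N}}=\frac{1}{\sqrt N}\int\phi\,d\pi$ and $\brak{f,\omega_{M,N}\otimes\omega_{M,N}}=\frac{1}{N}\int\!\int\Phi\,d\pi\,d\pi$, the latter automatically including the diagonal $i=j$.

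First I would invoke the moment-measure formula for Poisson processes: for any $n$ and any nonnegative $G$ on $E^n$,
\begin{equation*}
\expt{\int_{E^n}G\,d\pi^{\otimes n}}=\sum_{\Pi}\int_{E^{\abs{\Pi}}}G(z_{B(1)},\dots,z_{B(n)})\prod_{B\in\Pi}d\mu(z_B),
\end{equation*}
the sum running over all partitions $\Pi$ of $\set{1,\dots,n}$, with $B(s)$ the block containing the slot $s$ and $z_B$ a single variable shared by all slots of $B$ (for $n=2$ this is just Mecke's formula plus the diagonal term). Applying it with $n=2p$ to $\expt{\brak{h,\omega_{M,N}}^{2p}}$ (integrand $\prod_{s}\phi(z_{B(s)})$) and to $\expt{\brak{f,\omega_{M,N}\otimes\omega_{M,N}}^{p}}$ (integrand $\prod_{l=1}^{p}\Phi(z_{B(2l-1)},z_{B(2l)})$) rewrites each as a finite sum over partitions of products of block-integrals against $\mu$, carrying an overall prefactor $N^{-p}$.

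The key observation, the Poissonian counterpart of the Wick cancellation of \cite{flandoli}, is that since $\mu$ factorises and $\int\sigma^{m}\rho(d\sigma)$ vanishes for odd $m$, every partition having a block of odd cardinality contributes zero: each slot deposits exactly one factor $\sigma_B$, so a block $B$ carries $\sigma_B^{\abs B}$. Hence only partitions into even blocks survive, and these have at most $p$ blocks. Each block-integral produces one factor $N\lambda$ from $\mu$, so a surviving term scales like $N^{\abs{\Pi}-p}\leq N^{0}$ for $N\geq1$; the residual integrals consist of $p$ factors of $f$ (resp. of $h$) bounded by $\norm{f}_\infty^{p}$ (resp. $\norm{h}_\infty^{2p}$), together with time integrals $\int_0^M e^{-c\theta t}dt\leq\frac{1}{c\theta}$ and space integrals over the unit-mass torus, all finite uniformly in $M\in[0,\infty]$. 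Summing the finitely many even partitions yields the two bounds in (\ref{doubleintegralestimate}), with constants depending only on $p,\lambda,\theta$.

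Finally, (\ref{uniformbound}) follows by taking $f$ to be the smooth, hence bounded, kernel $\Delta^{-1-\delta}$ of (\ref{uniformbound}), for which $\brak{\Delta^{-1-\delta},\omega_{M,N}\otimes\omega_{M,N}}=\norm{\omega_{M,N}}_{H^{-1-\delta}}^{2}\geq0$, its boundedness (equivalently $\sum_{k\in\Z^2}(1+\abs{k}^2)^{-1-\delta}<\infty$) furnishing $C_{p,\delta}$. The main obstacle is precisely this sign-symmetry cancellation: discarding it and bounding $\abs{\sigma_i\sigma_j}\leq1$ reduces everything to an uncompensated Poisson integral whose mean already grows like $\sqrt N$, so the argument must retain the vanishing of odd blocks to keep all estimates uniform in $N$.
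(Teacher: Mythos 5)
Your proposal is correct, and it hinges on exactly the same cancellation as the paper's proof: only even powers of $\sigma$ survive integration against the symmetric law of the signs, so every surviving partition has even blocks, hence at most $p$ of them, and the factor $(N\lambda)^{\abs{\Pi}}$ coming from the intensity is absorbed by the prefactor $N^{-p}$, with the time integrals $\int_0^M e^{-\theta\abs{B}t}dt\leq(\theta\abs{B})^{-1}$ giving uniformity in $M\in[0,\infty]$. The difference lies in the formalism and in the treatment of the double integral. The paper obtains the moments by differentiating the Laplace transform (\ref{laplacexi}) $2p$ times via Fa\`a di Bruno's formula; since the cumulants of a Poisson integral are the single integrals $\int\phi^k d\mu$, that computation is precisely your partition expansion grouped by partition type $(r_1,\dots,r_{2p})$, so the two are the same calculation in different clothing. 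Where you genuinely diverge is the second estimate: the paper reduces it to the first by claiming one may take, ``without loss of generality'', separated variables $f(x,y)=h(x)h(y)$, for which $\brak{f,\omega_{M,N}\otimes\omega_{M,N}}^p=\brak{h,\omega_{M,N}}^{2p}$; this reduction is the weakest step of the paper's argument, since a general bounded $f$ is not dominated in the required $L^\infty$ sense by such products, and making it rigorous needs an extra (implicit) density/bilinearity argument. Your moment-measure formula handles $\brak{f,\omega_{M,N}\otimes\omega_{M,N}}$ directly, each factor $\Phi$ depositing one $\sigma$ per slot, so no reduction is needed at all --- a cleaner route. Two minor points to tighten: your partition formula is stated for nonnegative $G$ but applied to signed integrands, so you should first apply it to $\abs{G}$ (finite thanks to the exponential factors $e^{-\theta\abs{B}t}$, even when $M=\infty$ and $\mu$ has infinite mass) to justify Fubini before exploiting the $\sigma$-cancellation; and for (\ref{uniformbound}) boundedness and continuity of the kernel of $\Delta^{-1-\delta}$ (uniform convergence of $\sum_{k}(1+\abs{k}^2)^{-1-\delta}$ for $\delta>0$) is all that is needed, as you in effect observe.
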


\begin{proof}
	Since
	\begin{equation*}
	\brak{f,\omega_{M,N}\otimes \omega_{M,N}}=\brak{\tilde f,\omega_{M,N}\otimes \omega_{M,N}}, \quad 
	\tilde f(x,y)=\frac{1}{2}(f(x,y)+f(y,x)),
	\end{equation*}
	we reduce ourselves to symmetric functions. Moreover, without loss of generality we can check (\ref{doubleintegralestimate}) for functions with separate variables $f(x,y)=h(x)h(y)$, $h:\T^2\rightarrow \R$ measurable and bounded, for which it holds
	\begin{equation*}
	\expt{\brak{f,\omega_{M,N}\otimes \omega_{M,N}}^p}=\expt{\brak{h,\omega_{M,N}}^{2p}}.
	\end{equation*}
	Moments of the random variable $\brak{h,\omega_{M,N}}$ can be evaluated by differentiating the moment generating function
	(\ref{laplacexi}): using Fa\`a di Bruno's formula to take $2p$ derivatives we get
	\begin{align*}
	&\expt{\brak{h,\omega_{M,N}}^{2p}}=\\
	&\quad =(2p!)\sum_{\substack{r_1,\dots,r_{2p}\geq 0\\ r_1+2r_2+\dots +2pr_{2p}=2p}}
	\prod_{k=1}^{2p} \frac{1}{(k!)^{r_k}r_k!}\pa{N\lambda \int_{[0,M]\times\set{\pm1}\times \T^2} \frac{\sigma^k}{N^{k/2}}
		e^{-\theta t k}h(x) d\sigma dx dt}^{r_k}\\ 	
	&\quad \leq (2p!)\sum_{\substack{r_1,\dots,r_{2p}\geq 0\\ r_1+2r_2+\dots +2pr_{2p}=2p}}
	\prod_{k=1}^{2p} \frac{(N\lambda)^{r_k} \norm{h}_k^{k r_k} \one_{2|k}^{r_k}}{(\theta k)^{r_k} N^{kr_k/2}(k!)^{r_k}r_k!}\\
	&\quad =\frac{(2p!)\norm{h}_\infty^{2p}}{N^{p}}\sum_{\substack{r_1,\dots,r_{2p}\geq 0\\ r_1+2r_2+\dots +2pr_{2p}=2p}}
	\prod_{k=1}^{2p} \frac{(N\lambda)^{r_k} \one_{2|k}^{r_k}}{(\theta k)^{r_k}(k!)^{r_k}r_k!}
	\end{align*} 
	(see \cite{peszatzabczyk,privaultpoisson} for similar classical computations).
	Let us stress that when an integral in the latter formula is null, its 0-th power is to be interpreted as $0^0=1$.
	The contribution of $\one_{2|k}=\int \sigma^k d\sigma$ is crucial: 
	when $k$ is odd, $\one_{2|k}$ is null, so only terms with $m_k=0$ survive in the sum (again, $0^0=1$).
	Thus, the highest power of $N$ appearing is $N^{r_2}\leq N^{2p/2}=N^p$, which is compensated by the $N^{-p}$ we factored out,
	and this concludes the proof.
\end{proof}

We can now discuss convergence at fixed times.

\begin{prop}\label{prop:fixedtimemarginals}
	The laws of a family of variables $\omega_{M,N}\sim \frac{1}{\sqrt N} \Xi_{M}^{\theta,N\lambda}$, defined on a 
	probability space $(\Omega,\F,\PP)$ and taking values in on $H^{-1-\delta}$ are tight, for any fixed $\delta>0$.
	Moreover,
	\begin{itemize}
		\item the limit as $M\rightarrow\infty$ at fixed $N$, say $N=1$, is the law of $\Xi_{\infty}^{\theta,\lambda}$;	
		\item the limit as $N\rightarrow\infty$ at fixed $M$ (any $M\in (0,\infty]$) is the law of 
		$\sqrt{\frac{(1-e^{-2\theta M})\lambda}{2\theta}}\eta$;
	\end{itemize}
	and if the variables converge almost surely, they do so also in $L^p(\Omega,H^{-1-\delta})$ for any $1\leq p<\infty, \delta>0$.
\end{prop}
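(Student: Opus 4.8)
The plan is to treat the four assertions—tightness on $H^{-1-\delta}$, the two limit identifications, and the almost-sure-to-$L^p$ upgrade—one at a time, each resting on the uniform moment bound (\ref{uniformbound}).

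\emph{Tightness and the limit $M\to\infty$.} For tightness I would fix $0<\delta'<\delta$, so that $H^{-1-\delta'}\hookrightarrow H^{-1-\delta}$ is compact, and apply (\ref{uniformbound}) with $\delta'$ in place of $\delta$: this gives $\sup_{M,N}\expt{\norm{\omega_{M,N}}_{H^{-1-\delta'}}^2}\leq C_{\delta'}$, whence by \v{C}eby\v{s}\"ev's inequality the $H^{-1-\delta}$-compact balls $\set{\norm{\cdot}_{H^{-1-\delta'}}\leq R}$ carry all but $C_{\delta'}R^{-2}$ of the mass uniformly in $M,N$, which is Prokhorov tightness. For the limit $M\to\infty$ at $N=1$, I would simply note that $\omega_{M,1}=\Xi_M^{\theta,\lambda}$ is the partial sum up to time $M$ of the almost surely convergent series defining $\Xi_\infty^{\theta,\lambda}$; by the Poissonian isometry the tail $\Xi_\infty^{\theta,\lambda}-\Xi_M^{\theta,\lambda}=\int_M^\infty e^{-\theta t}\,d\pi^\lambda$ has $H^{-1-\delta}$ second moment decaying exponentially in $M$, so $\Xi_M^{\theta,\lambda}\to\Xi_\infty^{\theta,\lambda}$ in $L^2(\Omega,H^{-1-\delta})$, a fortiori in law.

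\emph{The limit $N\to\infty$ (central limit).} This is the substantial step, and I would read it off the Laplace transform (\ref{laplacexi}). For $g\in C^\infty(\T^2)$, substituting $\alpha\mapsto\imm\alpha/\sqrt N$ and rate $\lambda\mapsto N\lambda$ gives
\begin{equation*}
\expt{\exp\pa{\imm\alpha\brak{g,\omega_{M,N}}}}=\exp\pa{N\lambda\int_{[0,M]\times\set{\pm1}\times\T^2}\pa{e^{\imm\frac{\alpha}{\sqrt N}\sigma e^{-\theta t}g(x)}-1}\,dt\,d\sigma\,dx}.
\end{equation*}
Expanding the inner exponential to second order: the first-order term vanishes because $\int\sigma\,d\sigma=0$; the second-order term, multiplied by $N\lambda$ and using $\int\sigma^2\,d\sigma=1$ together with $\int_0^M e^{-2\theta t}\,dt=\frac{1-e^{-2\theta M}}{2\theta}$, contributes exactly $-\frac{\alpha^2}{2}\frac{(1-e^{-2\theta M})\lambda}{2\theta}\norm{g}_{L^2}^2$; and the remainder is $O(N^{-1/2})$ uniformly once the boundedness of $g$ is used. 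The limit is the characteristic function of $\brak{g,\sqrt{\frac{(1-e^{-2\theta M})\lambda}{2\theta}}\,\eta}$, and since the Poissonian integral is linear in $g$ this one computation captures every finite-dimensional projection at once. Combined with the tightness proved above, Prokhorov's theorem then forces convergence in law to the announced Gaussian field.

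\emph{From almost-sure to $L^p$ convergence.} Assuming $\omega_{M,N}\to\omega$ almost surely in $H^{-1-\delta}$ along the relevant limit, the bound (\ref{uniformbound}) with exponent $2p$ shows that $\set{\norm{\omega_{M,N}}_{H^{-1-\delta}}^p}$ is bounded in $L^2(\Omega)$, hence uniformly integrable, while Fatou's lemma gives $\expt{\norm{\omega}_{H^{-1-\delta}}^{2p}}<\infty$; Vitali's convergence theorem then upgrades the almost-sure convergence to convergence in $L^p(\Omega,H^{-1-\delta})$. I expect the central-limit step to be the main obstacle: the delicate points are controlling the remainder of the characteristic-function expansion uniformly in the integration variables (where the boundedness of $g$ is precisely what is needed) and then passing correctly from pointwise convergence of the characteristic functionals to genuine weak convergence on the infinite-dimensional space $H^{-1-\delta}$—a passage for which the tightness established in the first step is indispensable.
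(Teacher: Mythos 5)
Your proposal is correct, and its core---computing the characteristic functionals from the Laplace transform (\ref{laplacexi}), killing the first-order term by the symmetry $\int\sigma\,d\sigma=0$, and extracting the Gaussian variance $\frac{\lambda}{2\theta}(1-e^{-2\theta M})\norm{g}_{L^2}^2$ from the second-order term---is exactly the computation in the paper, as is the tightness argument (compact Sobolev embedding plus the uniform moments (\ref{uniformbound}) and \v{C}eby\v{s}\"ev's inequality). The differences are in the supporting steps. For the passage from convergence of characteristic functionals to convergence in law, the paper invokes Theorem 2 of \cite{gross}, an infinite-dimensional L\'evy continuity theorem, which makes the identification of limits independent of tightness; you instead combine tightness with Prokhorov's theorem and the fact that characteristic functionals separate probability measures on $H^{-1-\delta}$, so every subsequential limit must be the announced law and the full sequence converges. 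Your route is more self-contained (no external continuity theorem) but leans on the tightness proved first; the paper's is shorter at that point. For the limit $M\to\infty$ the paper merely observes that (\ref{laplacexi}) is valid at $M=\infty$, so the characteristic functionals converge trivially, while you prove the stronger statement that the tail $\int_M^\infty e^{-\theta t}\,d\pi^\lambda$ vanishes in $L^2(\Omega,H^{-1-\delta})$ by the Poissonian isometry, giving convergence in $L^2$ and a fortiori in law (note this uses that all the $\Xi^{\theta,\lambda}_M$ are built from the same $\pi^\lambda$, which is indeed how (\ref{defxi}) defines them). Finally, you make explicit the last claim---upgrading almost sure convergence to convergence in $L^p(\Omega,H^{-1-\delta})$ via uniform integrability of $\norm{\omega_{M,N}}_{H^{-1-\delta}}^p$ (bounded in $L^2(\Omega)$ by (\ref{uniformbound})) and Vitali's theorem---a step the paper's proof leaves implicit, so here you fill a small gap rather than diverge from the argument.
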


\begin{proof}
	The embedding $H^\alpha\hookrightarrow H^\beta$ is compact as soon as $\alpha>\beta$, and we know that the variables are
	uniformly bounded elements of $L^p(\Omega,H^{-1-\delta})$ for any $p\geq 1$ by (\ref{doubleintegralestimate}),
	so by \v{C}eby\v{s}\"ev's inequality their laws are tight.
	
	Identification of limit laws is yet another consequence of (\ref{laplacexi}): by Theorem 2 of \cite{gross}
	(an infinite-dimensional Lévy theorem)
	we only need to check that characteristic functions $\expt{e^{\imm \brak{\omega_{M,N},h}}}$ 
	converge to the ones of the announced limits for any $h\in H^{1+\delta}$.
	Since (\ref{laplacexi}) is valid for all $M\in[0,\infty]$, the limit for $M\rightarrow \infty$ poses no problem.
	As for the limit $N\rightarrow\infty$, for any test function $h\in H^{1+\delta}$,
	\begin{align*}
	\expt{\exp\pa{ \imm\brak{h,\omega_{M,N}}}} 
	&=e^{-N\lambda}\exp\pa{N\lambda  \int_{[0,M]\times\set{\pm1}\times \T^2}
		\exp\pa{\frac{\imm \sigma}{\sqrt N} h(x)e^{-\theta t}}dxd\sigma dt}\\
	&= e^{-N\lambda} \exp\pa{N\lambda \int_0^M \frac{1}{N}\norm{h}_2^2 e^{-2\theta t}dt+O_h\pa{\frac{1}{N}}}\\
	&\xrightarrow{N\rightarrow\infty} \exp\pa{\frac{\lambda }{2\theta} \norm{h}_2^2 (1-e^{-2\theta M})},
	\end{align*}
	where in the second step we used the following elementary expansion: for $\phi\in C(\T^2)$,
	\begin{equation}\label{elementaryexpansion}
	\abs{\frac{1}{2}\int_{\T^2} \pa{\exp\pa{\frac{\phi(x)}{\sqrt N}}+\exp\pa{-\frac{\phi(x)}{\sqrt N}}}dx-1-\frac{\norm{\phi}_2^2}{2N}}
	\leq \frac{\norm{\phi}_4^4}{24N^2}.
	\end{equation}
	Since $\expt{\exp\pa{ \imm\brak{h,\eta}}}=\exp(-\norm{h}_2^2)$, this concludes the proof.
\end{proof}

The latter result provides compactness ``in space'' (``equi-boundedness''): in order to apply
\autoref{cor:simon} and \autoref{thm:aldous}, we also need to obtain a control on the regularity ``in time'' 
(``equi-continuity''). We will obtain it by exploiting the equation satisfied by $\omega_{M,N}$, which we derived in
\autoref{cor:finitevortices}, which allows us to prove the forthcoming estimate on increments.

\begin{prop}
	Let $\omega_{M,N}:[0,T]\rightarrow H^{-1-\delta}$ be the stochastic process defined at the beginning of this Section.
	For any $\F_t$-stopping time $\tau\leq T$ (possibly constant), $r,\delta>0$,
	there exists a constant $C_{\delta,T}$ independent of $M,N,\tau,r$ such that
	\begin{equation}\label{incrementbound}
	\expt{\norm{\omega_{M,N,\tau+r}-\omega_{M,N,\tau}}_{H^{-3-\delta}}^2}\leq C_{\delta,T} \cdot r. 
	\end{equation} 
\end{prop}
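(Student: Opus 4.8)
The plan is to test the integral formulation (\ref{weakmodelsimple}) of the equation satisfied by $\omega_{M,N}$ against the Fourier basis and to estimate the resulting increment mode by mode. Recalling from (\ref{fourierseries}) that $\norm{v}_{H^{-3-\delta}}^2=\sum_{k\in\Z^2}(1+|k|^2)^{-3-\delta}\abs{\brak{e_k,v}}^2$ (applying (\ref{weakmodelsimple}) to the real and imaginary parts of $e_k$), I would take $v=\omega_{M,N,\tau+r}-\omega_{M,N,\tau}$ and split $\brak{e_k,v}$ into three contributions: the damping drift $-\theta\int_\tau^{\tau+r}\brak{e_k,\omega_s}\,ds$, the nonlinear drift $\int_\tau^{\tau+r}:\brak{H_{e_k},\omega_s\otimes\omega_s}:\,ds$, and the martingale increment $\brak{e_k,\Pi_{\tau+r}-\Pi_\tau}$, where $\Pi=\tfrac{1}{\sqrt N}\Sigma^{N\lambda}$. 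The aim is to bound the $L^2(\PP)$ norm of each piece uniformly in $M,N$ and in the stopping time $\tau$, then to sum against the weights $(1+|k|^2)^{-3-\delta}$.

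The martingale increment is the cleanest: $M^k_t:=\brak{e_k,\Pi_t}$ is a martingale with \emph{deterministic} quadratic variation $\langle M^k\rangle_t=\lambda t$ by (\ref{quadraticvariation}) and the $1/N$ scaling, so optional stopping (both $\tau$ and $\tau+r$ being bounded by $T+r$) gives $\expt{\abs{M^k_{\tau+r}-M^k_\tau}^2}=\expt{\langle M^k\rangle_{\tau+r}-\langle M^k\rangle_\tau}=\lambda r$, uniformly in $k$ and $N$.

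The point requiring care is the stopping time, which makes the limits of the drift integrals random; I would circumvent the need for any moment bound at $\tau$ by a Cauchy--Schwarz argument that isolates a \emph{deterministic} factor of $r$. Writing a drift term as $\int_\tau^{\tau+r}b_s\,ds$ and using $\int_\tau^{\tau+r}1\,ds=r$, one has pathwise $\abs{\int_\tau^{\tau+r}b_s\,ds}^2\leq r\int_\tau^{\tau+r}b_s^2\,ds\leq r\int_0^{T+r}b_s^2\,ds$, the last step because $b_s^2\geq 0$ and $[\tau,\tau+r]\subseteq[0,T+r]$. Taking expectations and applying Tonelli reduces the estimate to the fixed-time bounds, which hold uniformly in $M,N$ since the marginals are of type $\Xi^{\theta,N\lambda}_{M+s}$: from (\ref{doubleintegralestimate}) one gets $\sup_s\expt{\brak{e_k,\omega_s}^2}\leq C$, and from the Poissonian double-integral isometry $\sup_s\expt{\abs{:\brak{H_{e_k},\omega_s\otimes\omega_s}:}^2}\leq C\norm{H_{e_k}}_{L^2}^2$ (the integrand being well-defined and $L^2(\PP)$-bounded by \autoref{rmk:nonlinearterm}). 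Thus $\expt{\abs{\int_\tau^{\tau+r}b_s\,ds}^2}\leq C_T\,(\sup_s\expt{b_s^2})\,r$.

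Finally I would track the dependence on $k$, which is what dictates the choice of space and will be the crux of the summation. Since $\nabla e_k=2\pi\imm k\,e_k$ and $\abs{e_k(x)-e_k(y)}\leq 2\pi|k|\,d(x,y)$, the estimate $\abs{K(x,y)}\leq C/d(x,y)$ yields $\norm{H_{e_k}}_\infty\leq C(1+|k|^2)$, hence the nonlinear contribution is controlled by $C(1+|k|^2)^2 r$, while the damping and martingale contributions are uniform in $k$. Collecting the three bounds gives $\expt{\abs{\brak{e_k,\omega_{\tau+r}-\omega_\tau}}^2}\leq C_T(1+|k|^2)^2 r$, and summing against the weights leaves $\expt{\norm{\omega_{\tau+r}-\omega_\tau}_{H^{-3-\delta}}^2}\leq C_T\,r\sum_{k}(1+|k|^2)^{-1-\delta}=C_{\delta,T}\,r$. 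The residual series converges precisely because the exponent $1+\delta$ exceeds $1$ in dimension two, which is exactly why one must descend two orders of regularity to $H^{-3-\delta}$: the $|k|^2$ growth of $\norm{H_{e_k}}_\infty$, produced by the derivative in the Biot--Savart nonlinearity, is what is absorbed by the extra two negative orders.
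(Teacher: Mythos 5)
Your proposal is correct and follows essentially the same route as the paper: testing the integral formulation (\ref{weakmodelsimple}) against the Fourier basis, Cauchy--Schwarz on the two drift terms combined with the uniform moment bounds (\ref{doubleintegralestimate}), a martingale estimate of order $r$ for the noise increment, and summation of the resulting $|k|^4$ growth against the weights $(1+|k|^2)^{-3-\delta}$. The only (harmless) deviation is that you handle the stopped noise increment by optional stopping with the deterministic quadratic variation $\lambda t$, where the paper instead bounds it by a supremum over deterministic times and invokes Burkholder--Davis--Gundy; your variant is, if anything, cleaner.
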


\begin{proof}
	In order to lighten notation, and since the final result must not depend on $M,N$, let us drop them when writing
	$\omega_{M,N,t}=\omega_t$. By its definition in \ref{eqomegamn} and \autoref{rmk:integralformulation} we know that
	the process satisfies the integral equation
	\begin{equation}\label{increments}
	\brak{f,\omega_{t+r}}-\brak{f,\omega_t}=-\theta \int_t^{t+r} \brak{f,\omega_s} ds
	+\int_t^{t+r} :\brak{H_f,\omega_s\otimes\omega_s}: ds +\brak{f,\frac{1}{\sqrt{N}}(\Sigma^{N\lambda}_{t+r}-\Sigma^{N\lambda}_t)},
	\end{equation}
	for any smooth $f\in C^\infty(\T^2)$. Since this equation holds $\PP$-almost surely uniformly in $s,t\in[0,T]$,
	it is also true when we replace $t$ with the stopping time $\tau$.
	It is convenient to recall that
	\begin{equation*}
	\norm{u}_{H^{-3-\delta}}^2=\sum_{k\in\Z^2} (1+|k|^2)^{-3-\delta} |\hat u_k|^2,
	\end{equation*}
	so we can use the weak integral equation against the orthonormal functions $e_k$ to control the full norm:
	\begin{equation}\label{fullnorm}
	\expt{\norm{\omega_{\tau+r}-\omega_\tau}_{H^{-3-\delta}}^2}=\sum_{k\in\Z^2} (1+|k|^2)^{-3-\delta}
	\expt{\abs{\brak{\omega_{\tau+r}-\omega_\tau, e_k}}^2}.
	\end{equation}
	We estimate increments by bounding separately the terms in the equation, let us start from the linear one:
	\begin{equation}\label{pieceone}
	\expt{\abs{\int_\tau^{\tau+r} \brak{f,\omega_s} ds}^2} 
	\leq  r \expt{\int_0^T \abs{\brak{f,\omega_s} }^2 ds}=r \int_0^T \expt{\abs{\brak{f,\omega_s} }^2} ds
	\leq C T r \norm{f}_\infty^2, 		
	\end{equation}
	where the last passage makes use of the uniform estimate (\ref{doubleintegralestimate}).
	The nonlinearity is the harder one, and its singularity is the reason why we can not obtain space regularity
	beyond $H^{-3-\delta}$,
	\begin{align}\label{piecetwo}
	\expt{\abs{\int_\tau^{\tau+r} :\brak{H_f,\omega_s\otimes\omega_s}: ds}^2} 
	&\leq r \int_0^T  \expt{\abs{:\brak{H_f,\omega_s\otimes\omega_s}: }^2 }ds\\
	&\leq C T r \norm{H_f}_\infty^2\leq C T r \norm{f}_{C^2(\T^2)}^2,
	\end{align}
	where the second passage uses (\ref{doubleintegralestimate}), and the third is due to the fact that
	by Taylor expansion
	\begin{equation*}
	\abs{H_f(x,y)}=\frac{1}{2}\abs{K(x,y)(\nabla f(x)-\nabla f(y))}
	\leq C \frac{\abs{\nabla f(x)-\nabla f(y)}}{d(x,y)} \leq C \norm{f}_{C^2(\T^2)}.
	\end{equation*}
	By (\ref{quadraticvariation}), the martingale $(\brak{f,N^{-1/2}(\Sigma^{N\lambda}_{t+r}-\Sigma^{N\lambda}_t)})_{t\in[0,T]}$
	has constant quadratic variation $\lambda r \norm{f}^2_{L^2}$, so Burkholder-Davis-Gundy inequality gives
	\begin{align}\label{piecethree}
	\expt{\abs{\brak{f,N^{-1/2}(\Sigma^{N\lambda}_{\tau+r}-\Sigma^{N\lambda}_\tau)}}^2}
	\leq \expt{\sup_{t\in [0,T]} \abs{\brak{f,N^{-1/2}(\Sigma^{N\lambda}_{t+r}-\Sigma^{N\lambda}_t)}}^2}
	\leq C \lambda r \norm{f}^2_{L^2}.
	\end{align}
	Applying estimates (\ref{pieceone},\ref{piecetwo},\ref{piecethree}) to the functions $e_k$, from (\ref{increments})
	and Cauchy-Schwarz inequality we get 
	\begin{equation*}
	\expt{\abs{\brak{\omega_{\tau+r}-\omega_\tau, e_k}}^2}\leq C_{\theta,\lambda,T} r |k|^4,
	\end{equation*}
	so that (\ref{fullnorm}) gives us
	\begin{equation*}
	\expt{\norm{\omega_{\tau+r}-\omega_\tau}_{H^{-3-\delta}}^2}\leq \sum_{k\in\Z^2} (1+|k|^2)^{-3-\delta}
	C r \pa{T+|k|^4 T +\lambda}\leq C_{\theta,\lambda,T,\delta} r,
	\end{equation*}
	which concludes the proof.
\end{proof}

\begin{prop}\label{prop:tightness}
	The laws of the processes $\omega_{M,N}:[0,T]\rightarrow H^{-1-\delta}$ are tight in
	\begin{equation*}
	L^q([0,T],H^{-1-\delta})\cap \D([0,T],H^{-3-\delta})
	\end{equation*}
	for any $\delta>0, 1\leq q<\infty$.
\end{prop}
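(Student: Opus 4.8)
The plan is to obtain tightness separately in each of the two factor spaces and then intersect: Simon's criterion in the form of \autoref{cor:simon} will handle the factor $L^q([0,T],H^{-1-\delta})$, while Aldous' criterion (\autoref{thm:aldous}) will handle $\D([0,T],H^{-3-\delta})$. For the first factor the task reduces to verifying the moment condition (\ref{momentscondition}) for the family $\omega_{M,N}$, uniformly in $M,N$. The first summand is immediate: integrating the uniform bound (\ref{uniformbound}) over $[0,T]$ bounds $\expt{\norm{\omega_{M,N}}_{L^p([0,T],H^{-1-\delta})}}$ uniformly in $M,N$ for every $p$. For the second summand I translate the increment estimate (\ref{incrementbound}) into a control of the fractional Sobolev seminorm in time. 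Writing the Gagliardo seminorm explicitly and applying Cauchy--Schwarz in the form $\expt{X}\leq\sqrt{\expt{X^2}}$ together with (\ref{incrementbound}),
\[
\expt{[\omega_{M,N}]_{W^{1/2-\gamma,1}([0,T],H^{-3-\delta})}}
\leq \int_0^T\!\!\int_0^T \frac{\sqrt{\expt{\norm{\omega_{M,N,t}-\omega_{M,N,\tau}}_{H^{-3-\delta}}^2}}}{|t-\tau|^{3/2-\gamma}}\,dt\,d\tau
\leq C_{\delta,T}\int_0^T\!\!\int_0^T |t-\tau|^{\gamma-1}\,dt\,d\tau,
\]
and the last double integral is finite precisely because $\gamma>0$. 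Thus (\ref{momentscondition}) holds uniformly in $M,N$ and \autoref{cor:simon} yields tightness of the laws in $L^q([0,T],H^{-1-\delta})$ for every $1\leq q<\infty$ and $\delta>0$.

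For the factor $\D([0,T],H^{-3-\delta})$ I will verify the two hypotheses of \autoref{thm:aldous} with $S=H^{-3-\delta}$. Condition (1), tightness of the fixed-time marginals $\omega_{M,N,t}$, follows from \autoref{prop:fixedtimemarginals}: these are tight already in the smaller space $H^{-1-\delta}$, and since the embedding $H^{-1-\delta}\hookrightarrow H^{-3-\delta}$ is continuous, a containing compact set in $H^{-1-\delta}$ stays compact in $H^{-3-\delta}$, so tightness persists there. Condition (2) is exactly what (\ref{incrementbound}) was designed for: for any $\F_t$-stopping time $\tau_n\leq T$ and $0\leq r\leq R$, Chebyshev's inequality and the increment bound give
\[
\PP\pa{\norm{\omega_{M,N,\tau_n+r}-\omega_{M,N,\tau_n}}_{H^{-3-\delta}}\geq \epsilon'}
\leq \frac{\expt{\norm{\omega_{M,N,\tau_n+r}-\omega_{M,N,\tau_n}}_{H^{-3-\delta}}^2}}{\epsilon'^2}
\leq \frac{C_{\delta,T}\,R}{\epsilon'^2},
\]
which, choosing $R$ small in terms of $\epsilon,\epsilon'$, is at most $\epsilon$ uniformly in $M,N$ and in the stopping time. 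Hence the laws are tight in $\D([0,T],H^{-3-\delta})$.

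It remains to combine the two conclusions. Given $\epsilon>0$, tightness in each factor provides a compact $K_1\subset L^q([0,T],H^{-1-\delta})$ and a compact $K_2\subset\D([0,T],H^{-3-\delta})$ with $\prob{\omega_{M,N}\notin K_i}<\epsilon/2$ uniformly in $M,N$; then $K_1\cap K_2$ is compact in the intersection topology and carries mass at least $1-\epsilon$, which is the asserted tightness. I expect the only genuinely delicate points to be bookkeeping ones: ensuring the increment estimate (\ref{incrementbound}), proved for stopping times, is used both at deterministic time pairs (in the Gagliardo seminorm) and at stopping times (in Aldous' condition), and checking that the two notions of compactness are compatible on the intersection space. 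The analytic substance has already been extracted in the preceding propositions, so no new estimate is required here.
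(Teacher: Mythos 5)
Your proof is correct and follows essentially the same route as the paper: uniform moment bounds from (\ref{uniformbound}) for the $L^p(\Omega\times[0,T],H^{-1-\delta})$ control, the increment estimate (\ref{incrementbound}) fed through Cauchy--Schwarz into the Gagliardo seminorm for \autoref{cor:simon}, and \v{C}eby\v{s}\"ev's inequality for Aldous' condition in $H^{-3-\delta}$. The only differences are cosmetic (you use second-moment Chebyshev where the paper uses the first moment, and you spell out the intersection-of-compacts step that the paper leaves implicit), so no further changes are needed.
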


\begin{proof}
	Since $\omega_{M,N,t}\sim \frac{1}{\sqrt N} \Xi_{M+t}^{\theta,N\lambda}$, they are bounded in 
	$L^p(\Omega,H^{-1-\delta})$ for any $\delta>0, 1\leq p<\infty$ uniformly in $M,N,t$ as shown in \autoref{prop:fixedtimemarginals},
	and as a consequence the processes $\omega_{M,N}$ are uniformly bounded in $L^p(\Omega\times [0,T],H^{-1-\delta})$,
	for any $\delta>0, 1\leq p<\infty$. Moreover, we have proved fixed-time tightness. 
	We are thus left to prove Aldous' condition in $H^{-3-\delta}$ and to control a fractional Sobolev norm in time
	in order to apply \autoref{cor:simon} and \autoref{thm:aldous}, concluding the proof.
	As in the previous proof, we denote $\omega_{M,N,t}=\omega_t$.
	
	We only need to apply the uniform bound on increments (\ref{incrementbound}).
	Starting from the fractional Sobolev norm, we evaluate
	\begin{align*}
	\expt{\norm{\omega}_{W^{\alpha,1}([0,T],H^{-3-\delta})}}
	&= \expt{\int_0^T \int_0^T \frac{\norm{\omega_t-\omega_s}_{H^{-3-\delta}}}{|t-s|^{1+\alpha}}dtds}\\
	&\leq \int_0^T \int_0^T \frac{\expt{\norm{\omega_t-\omega_s}_{H^{-3-\delta}}}}{|t-s|^{1+\alpha}}dtds\\
	&\leq C \int_0^T \int_0^T |t-s|^{-1/2-\alpha},
	\end{align*}
	which converges as soon as $\alpha<1/2$. Aldous's condition follows from \v{C}eby\v{s}\"ev's inequality:
	if $\tau$ is a stopping time for $\omega_t$, then
	\begin{equation*}
	\sup_{0\leq r\leq R} \PP\pa{\norm{\omega_{\tau+r}-\omega_\tau}_{H^{-3-\delta}}\geq \epsilon}
	\leq  \epsilon^{-1} \sup_{0\leq r\leq R}  \expt{\norm{\omega_{\tau+r}-\omega_\tau}_{H^{-3-\delta}} }\\
	\leq C \epsilon^{-1} R^{1/2},
	\end{equation*}
	where the right-hand side is smaller than $\epsilon'>0$ as soon as $R$, which we can choose, is small enough.	
\end{proof}

Let us conclude this paragraph with a martingale central limit theorem concerning the driving noise
of our approximant processes.

\begin{prop}\label{prop:martclt}
	Let $(\Pi^N_t)_{t\in[0,T],N\in\N}$ be a sequence of $H^{-1-\delta}$-valued
	martingale with laws $\Pi^N \sim \frac{1}{\sqrt N}\Sigma^{N\lambda}$ (fix $\delta>0$).
	The laws of $\Pi^N$ are tight in
	\begin{equation}\label{thespace}
	L^q([0,T],H^{-1-\delta})\cap \D([0,T],H^{-1-\delta})
	\end{equation}
	for any $\delta>0, 1\leq q<\infty$, and limit points have the law of the Wiener process $\sqrt{\lambda}W_t$ on 
	$H^{-1-\delta}$ with covariance
	\begin{equation*}
	\expt{\brak{W_t,f},\brak{W_s,g}}=t\wedge s \brak{f,g}_{L^2(\T^2)}.
	\end{equation*}
\end{prop}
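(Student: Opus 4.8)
The plan is to proceed exactly as in \autoref{prop:tightness}, since $\Pi^N$ is a genuine $H^{-1-\delta}$-valued martingale to which the same compactness machinery applies, and then to identify the limit by a characteristic-functional computation exploiting the independent increments of the compound Poisson process. First I would establish tightness. Fixed-time tightness is immediate: from the scaled second-moment bound $\expt{\norm{\Pi^N_t}_{H^{-1-\delta_0}}^2}=C\lambda t$ (obtained from the isometry, uniformly in $N$), which holds for every $\delta_0>0$, choosing $\delta_0<\delta$ and using the compact embedding $H^{-1-\delta_0}\hookrightarrow H^{-1-\delta}$, \v{C}eby\v{s}\"ev's inequality gives tightness on $H^{-1-\delta}$ at each fixed time. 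For the increments, using the quadratic variation (\ref{quadraticvariation}) scaled by $N^{-1}$ one finds, for any Fourier mode $e_k$,
\begin{equation*}
\expt{\abs{\brak{\Pi^N_{t+r}-\Pi^N_t,e_k}}^2}=\lambda r\norm{e_k}_{L^2}^2=\lambda r,
\end{equation*}
uniformly in $N$ and $t$; summing against the weights $(1+|k|^2)^{-1-\delta}$, which are summable over $\Z^2$ precisely for $\delta>0$, yields
\begin{equation*}
\expt{\norm{\Pi^N_{t+r}-\Pi^N_t}_{H^{-1-\delta}}^2}\leq C_\delta\lambda r.
\end{equation*}
This is the analogue of (\ref{incrementbound}), now holding with the better spatial regularity $H^{-1-\delta}$ precisely because no singular nonlinear term is present. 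From here the argument of \autoref{prop:tightness} applies verbatim: the $L^2$ increment bound controls the fractional Sobolev norm $W^{\alpha,1}([0,T],H^{-1-\delta})$ for $\alpha<1/2$ (via Jensen and integrability of $|t-s|^{-1/2-\alpha}$), so that \autoref{cor:simon} gives tightness in $L^q([0,T],H^{-1-\delta})$, while the same bound and \v{C}eby\v{s}\"ev yield Aldous' condition of \autoref{thm:aldous} in $\D([0,T],H^{-1-\delta})$.

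It remains to identify the limit, for which I would show convergence of the finite-dimensional distributions to those of $\sqrt\lambda W$. Fix times $0\leq t_1<\dots<t_m\leq T$ and test functions $h_1,\dots,h_m\in H^{1+\delta}$; the increments $\Pi^N_{t_j}-\Pi^N_{t_{j-1}}$ are independent across $j$ by the independent-increment structure of $\Sigma^{N\lambda}$, so the joint characteristic function factorises, and on each increment the computation is identical to the one in \autoref{prop:fixedtimemarginals}: by the characteristic function of the compound Poisson process $\Sigma^{N\lambda}$ and the elementary expansion (\ref{elementaryexpansion}),
\begin{equation*}
\expt{\exp\pa{\imm\brak{\Pi^N_{t_j}-\Pi^N_{t_{j-1}},h_j}}}\xrightarrow{N\rightarrow\infty}\exp\pa{-\frac{\lambda}{2}(t_j-t_{j-1})\norm{h_j}_{L^2}^2}.
\end{equation*}
Reassembling the factors shows that the finite-dimensional distributions converge to those of a centred Gaussian process with independent increments and covariance $\lambda(t\wedge s)\brak{f,g}_{L^2}$, that is, exactly those of $\sqrt\lambda W_t$. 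Equivalently, one may invoke a martingale central limit theorem: the $\Pi^N$ are martingales whose tensor quadratic variation converges to the deterministic $t\,Q$, with $Q$ the covariance operator of $\eta$, and whose jumps vanish, so the limit is the associated continuous Gaussian martingale.

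Finally I would combine the two ingredients: tightness in the product space (\ref{thespace}) together with convergence of the finite-dimensional distributions at every $t$ forces every limit point to have the law prescribed by those marginals, and uniqueness of the finite-dimensional distributions makes the whole sequence converge to $\sqrt\lambda W$. The one point requiring care is that the limit, a priori only \emph{cadlag}, is in fact continuous: this follows because the jumps of $\Pi^N$ have deterministic size $N^{-1/2}\norm{\delta}_{H^{-1-\delta}}\to 0$, so that the maximal jump vanishes and any $\D$-limit point is supported on $C([0,T],H^{-1-\delta})$. I expect this passage from the vanishing-jump bound to genuine continuity of the limit, together with securing the correct joint-in-time covariance $t\wedge s$ (rather than merely the one-time variances), to be the only delicate point; the tightness itself is routine given the estimates already developed for \autoref{prop:tightness}.
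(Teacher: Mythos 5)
Your proposal is correct, and its tightness half is essentially the paper's own argument: the paper likewise derives $\expt{\norm{\Pi^N_{\tau+r}-\Pi^N_\tau}_{H^{-1-\delta}}^2}\leq C_{\delta,\lambda}\, r$ uniformly in $N$ (quoting (\ref{piecethree}), i.e.\ the deterministic quadratic variation (\ref{quadraticvariation})), and then repeats the argument of \autoref{prop:tightness} with the improved space regularity $H^{-1-\delta}$, exactly as you do --- only be sure to state your increment bound at stopping times, as \autoref{thm:aldous} demands; this is immediate here because the quadratic variation is deterministic, so optional stopping gives the same constant. Where you genuinely differ is the identification of limit points. The paper argues in three steps: the martingale property survives the limit because it can be written as $\expt{(\Pi^N_t-\Pi^N_s)\Phi(\Pi^N\mid_{[0,s]})}=0$ for bounded measurable $\Phi$; the fixed-time marginals converge to $\sqrt{\lambda t}\,\eta$ by the characteristic-function computation of \autoref{prop:fixedtimemarginals} with $\theta=0$, $M=t$; and the covariances (\ref{whitenoisecovariance}), (\ref{poissoncovariance}) match. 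You instead exploit the independent, stationary increments of $\Sigma^{N\lambda}$ to factor the finite-dimensional characteristic functionals, apply the same fixed-time computation incrementwise, and conclude by tightness plus convergence of finite-dimensional distributions, adding the vanishing-jump bound $N^{-1/2}\norm{\delta}_{H^{-1-\delta}}\to 0$ to place limit points in $C([0,T],H^{-1-\delta})$. Your route costs a little more writing but is more conclusive at exactly the delicate point you flag: a martingale with Gaussian one-time marginals and covariance $t\wedge s$ need not be a Wiener process (joint Gaussianity and independence of increments do not follow from one-time marginals), so the paper's identification implicitly relies on the joint finite-dimensional computation that you carry out explicitly; conversely, the paper's formulation buys brevity and records the useful fact that limit points remain martingales, while the continuity of the limit, which the paper leaves implicit, is precisely what your maximal-jump argument supplies.
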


\begin{proof}
	By (\ref{piecethree}) we readily get
	\begin{equation*}
	\expt{\norm{\Pi^N_{\tau+r}-\Pi^N_\tau}_{H^{-1-\delta}}^2}\leq C_{\delta,\lambda} r
	\end{equation*}
	for any $N\in\N, \delta,r>0$ and any $\tau$ stopping time for $\Pi^N$, uniformly in $N$.
	The very same argument of the last proposition (here with a better space regularity) proves then the claimed tightness.
	The martingale property (with respect to the processes own filtrations) carries on to limit points since it can
	be expressed by means of the following integral formulation: for any $s,t\in[0,T]$,
	\begin{equation*}
	\expt{(\Pi^N_t-\Pi^N_s)\Phi(\Pi^N\mid_{[0,s]})}=0
	\end{equation*}
	for all the real bounded measurable functions $\Phi$ on $(H^{-1-\delta})^{[0,s]}$.
	Limit points are Gaussian processes, since at any fixed time 
	\begin{equation*}
	\frac{1}{\sqrt N} \Sigma^{N\lambda}_t\sim 	\frac{1}{\sqrt N}\Xi_t^{\theta=0,N\lambda}
	\xrightarrow{N\rightarrow\infty} \sqrt{\lambda t} \eta\sim \sqrt{\lambda} W_t,
	\end{equation*}
	as one can show by repeating the computations on characteristic functions in \autoref{prop:fixedtimemarginals} with
	$\theta=0, M=t$. It now suffices to recall the covariance formulas (\ref{whitenoisecovariance}) and (\ref{poissoncovariance}),
	\begin{equation*}
	\expt{\brak{\frac{1}{\sqrt N}\Sigma_t^{N\lambda},f}\brak{\frac{1}{\sqrt N}\Sigma_s^{N\lambda},g}}
	=\lambda (t\wedge s) \brak{f,g}_{L^2}^2
	=\expt{\brak{\sqrt \lambda W_t,f},\brak{\sqrt \lambda W_s,g}},
	\end{equation*}
	to conclude that any limit point has the law of $\sqrt{\lambda} W$.
\end{proof}
%%%%%%%%%%%%%%%%%%%%%%%%%%%%%%%%%%%%%%%%%%%%%%%%%%%%%%%%%%%%%%%%%%%%%%%%%%%%
\subsection{Identifying Limits}\label{ssec:limitpoints}

The last step is to prove that limit points of the family of processes $\omega_{M,N}$ satisfy \autoref{defi:modelsolution}.
First, let us recall once again our setup for the sake of clarity:
\begin{itemize}
	\item $\lambda,\theta>0$ are fixed throughout;
	\item there is a probability space $(\Omega,\F,\PP)$ on which the stochastic processes
	$\Sigma_t^{N\lambda}$ and the random variables $\Xi_{M}^{\theta,N\lambda}$ are defined, for $M\geq 0, N\in\N$,
	their laws being as in \autoref{sec:definitions};
	\item the processes $(\omega_{M,N,t})_{t\in[0,T]}$ are defined as at the beginning of this section:
	strong solutions of (\ref{eqomegamn}) with initial datum $\frac{1}{\sqrt N}\Xi_{M}^{\theta,N\lambda}$ and driving noise 
	$\frac{1}{\sqrt N}\Sigma_t^{N\lambda}$, built as in \autoref{cor:finitevortices}.
\end{itemize}
To fix notation, let us consider separately the following three cases: by \autoref{prop:tightness}, 
we can consider converging sequences
\begin{itemize}
	\item[(Ps)] $(\omega_{M_n,N=1})_{n\in\N}$, with $M_n\rightarrow\infty$ as $n\rightarrow\infty$,
	the limit being $\omega^P_t$;
	\item[(G)] $(\omega_{M,N_n})_{n\in\N}$, with  $N_n\rightarrow\infty$ as $n\rightarrow\infty$ and fixed $M<\infty$,
	the limit being $\omega^G_{M,t}$;
	\item[(Gs)] $(\omega_{M_n,N_n})_{n\in\N}$, with  $M_n,N_n\rightarrow\infty$ as $n\rightarrow\infty$,
	the limit being $\omega^G_t$;
\end{itemize}
the convergence in law takes place in $L^q([0,T],H^{-1-\delta})\cap \D([0,T],H^{-3-\delta})$,
for any fixed $\delta>0, 1\leq q<\infty$.
By \autoref{prop:fixedtimemarginals}, the Poissonian limit (Ps) has marginals 
$\omega^P_t\sim \Xi_{\infty}^{\theta,\lambda}$, and the Gaussian ones
$\omega^G_{M,t}\sim \sqrt{\frac{\lambda}{2\theta}(1-e^{-2\theta(M+t)})}\eta$ for all $t\in[0,T]$, 
$M\in[0,\infty)$, and $\omega^G_t\sim \sqrt{\frac{\lambda}{2\theta}}\eta$
(the labels are given so to match the ones in \autoref{defi:modelsolution}). 
Notice that $(\omega^P_m)_{m\in\N}$ have all the same driving noise $\Sigma_t^\lambda$,
but different initial data, while in the Gaussian limiting sequences the driving noise also varies.
Let us show that the limit laws in the cases where $M\rightarrow\infty$ are stationary.

\begin{prop}\label{prop:stationarity}
	The processes $\omega^P_t$ and $\omega^G_t$ are stationary.
\end{prop}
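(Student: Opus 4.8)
The plan is to show stationarity of $\omega^P_t$ and $\omega^G_t$ by exploiting the fact that these are the limits of the sequences $(\omega_{M_n, N=1})$ and $(\omega_{M_n,N_n})$ in which $M_n \to \infty$. The key structural observation is that the approximating processes $\omega_{M,N}$ are \emph{almost} stationary: by \autoref{cor:finitevortices} and \autoref{prop:fixedtimemarginals}, the process $\omega_{M,N,t}$ has marginal law $\frac{1}{\sqrt N}\Xi_{M+t}^{\theta,N\lambda}$, so the only obstruction to stationarity is the shifting parameter $M+t$ in the subscript. Since sending $M \to \infty$ saturates this parameter (the limit marginal is $\Xi_\infty^{\theta,\lambda}$, respectively $\sqrt{\lambda/2\theta}\,\eta$, independent of $t$), stationarity should emerge precisely in the limit.

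To make this rigorous, first I would fix an arbitrary time shift $a>0$ and compare the time-shifted process $\omega_{M,N,\cdot + a}$ with $\omega_{M+a,N,\cdot}$. By the explicit construction of \autoref{sec:finitevortices}, both processes are built from point vortices obeying (\ref{posdyn}) and (\ref{intdyn}) with vortices scattered uniformly and driven by the same Poisson mechanism; using the reversal-of-parametrisation argument already employed in the proof of \autoref{cor:finitevortices} (the one showing $\omega_{M,t}\sim \Xi_{M+t}^{\theta,\lambda}$), I would argue that the \emph{finite-dimensional distributions} of $\omega_{M,N,\cdot+a}$ coincide, as $M\to\infty$, with those of $\omega_{M,N,\cdot}$. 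Concretely, for any finite set of times $0\le s_1 < \dots < s_k \le T$ and any bounded continuous $\Phi$ on $(H^{-1-\delta})^k$, the quantity $\expt{\Phi(\omega_{M,N,s_1+a},\dots,\omega_{M,N,s_k+a})}$ differs from $\expt{\Phi(\omega_{M+a,N,s_1},\dots,\omega_{M+a,N,s_k})}$ only through how the vortices present before time $0$ are accounted for, a discrepancy that is controlled by the mass $\Xi^{\theta,N\lambda}$ assigns to the ``old'' generation and which becomes irrelevant as $M\to\infty$ because the marginals stabilise.

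Then I would pass to the limit. Since the convergence $\omega_{M_n,N_n}\to \omega^G$ (and likewise for $\omega^P$) holds in law on $L^q([0,T],H^{-1-\delta})\cap \D([0,T],H^{-3-\delta})$ by \autoref{prop:tightness}, and since evaluation at fixed times is almost surely continuous on $\D$ off a countable set, the finite-dimensional distributions of the limit are obtained as limits of those of the approximants (at continuity times, which are dense). Applying the comparison of the previous step with $M=M_n\to\infty$, the $a$-shifted finite-dimensional distributions of the limit coincide with the unshifted ones for every $a>0$; since finite-dimensional distributions determine the law of a process, this yields stationarity of $\omega^P_t$ and $\omega^G_t$.

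The main obstacle I anticipate is the bookkeeping in the comparison step: the time-shift identity $\omega_{M,N,\cdot+a}\overset{d}{\approx}\omega_{M+a,N,\cdot}$ is only approximate at finite $M$ because the shifted process inherits, at its ``time zero'', a configuration of vortices that have already aged and interacted for time $a$, whereas $\omega_{M+a,N}$ starts from a fresh sample of $\frac{1}{\sqrt N}\Xi_{M+a}^{\theta,N\lambda}$. Reconciling these requires either the Markov property from \autoref{cor:finitevortices} together with the invariance of the family $\Xi_M^{\theta,\lambda}$ under the dynamics (so that $\Xi_{M+t}^{\theta,\lambda}$ is genuinely the law reached from $\Xi_M^{\theta,\lambda}$ at time $t$), or a direct characteristic-function computation as in \autoref{prop:fixedtimemarginals} extended to joint distributions at several times. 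The cleanest route is likely the former: because the dynamics preserves the independently-scattered Poissonian structure and the marginals converge to a genuinely time-invariant law, the Markov semigroup acquires that law as a stationary distribution in the limit, forcing the limiting process to be stationary.
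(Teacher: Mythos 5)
Your strategy is the same as the paper's: treat $M$ as a time-like parameter, relate the time-shifted process to the process with enlarged $M$, and pass this relation to the limit through finite-dimensional distributions. The one substantive divergence is that you treat the key comparison as only approximate, $\omega_{M,N,\cdot+a}\overset{d}{\approx}\omega_{M+a,N,\cdot}$, with a discrepancy to be controlled as $M\to\infty$. In the paper this identity is \emph{exact} for every finite $M$: equation (\ref{omogeneity}) asserts
\begin{equation*}
(\omega_{M,N,t_1+r},\dots,\omega_{M,N,t_k+r})\sim(\omega_{M+r,N,t_1},\dots,\omega_{M+r,N,t_k}),
\end{equation*}
and it is proved by combining the flow property (\ref{markovproperty}) from the construction in \autoref{cor:finitevortices}, the exact marginal identity $\omega_{M,N,a}\sim\frac{1}{\sqrt N}\Xi^{\theta,N\lambda}_{M+a}$ (also from \autoref{cor:finitevortices}), and the independence and shift-invariance of the increments of $\Sigma^{N\lambda}$. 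Your fallback route --- bounding the discrepancy by ``the mass $\Xi^{\theta,N\lambda}$ assigns to the old generation'' and arguing it becomes irrelevant ``because the marginals stabilise'' --- would not close the argument: comparing the aged configuration at time $a$ with a fresh sample of $\frac{1}{\sqrt N}\Xi^{\theta,N\lambda}_{M+a}$ pathwise requires some stability of the point-vortex flow with respect to perturbations of the initial configuration, and no such continuity is available (this is precisely why the paper must use compactness methods rather than fixed-point ones); moreover, stabilisation of one-time marginals says nothing about joint laws. You do, however, name the correct repair yourself (the Markov property together with the invariance of the family $\Xi^{\theta,\lambda}_{M+t}$ under the dynamics), and pursuing that route turns your approximate comparison into the paper's exact identity, after which the proof goes through.

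Two further cautions. First, your closing heuristic --- that ``the Markov semigroup acquires that law as a stationary distribution in the limit, forcing the limiting process to be stationary'' --- cannot be made rigorous here: the limit process is not known to be Markov, as the paper itself stresses in the remark following \autoref{prop:stationarity}, since that would require a uniqueness statement which is out of reach. Stationarity must instead be read off by passing the finite-$n$ identity to the limit in finite-dimensional distributions, exactly as in your third paragraph; your attention there to evaluation at almost-sure continuity times of the \emph{cadlag} limit is in fact more explicit than the paper's one-line ``passing (\ref{omogeneity}) to the limits''. Second, both your write-up and the paper's leave implicit the same identification step: one must know that the sequence $\omega_{M_n+r,N_n}$, whose parameters still diverge, has the same limiting finite-dimensional laws as $\omega_{M_n,N_n}$; the paper compresses this into the statement that ``the dependence on $r$ of the right-hand side disappears'', and your proposal stays at the same level of detail on this point.
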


\begin{proof}
	As the intuition suggests, the key is the fact that $M$ is a time-like parameter, and taking $M\rightarrow \infty$
	corresponds to the infinite time limit. Formally, we observe that for all $r>0$, $0\leq t_1\leq \dots \leq t_k<\infty$,
	and $M,N$,
	\begin{equation}\label{omogeneity}
	(\omega_{M,N,t_1+r},\dots,\omega_{M,N,t_k+r})\sim (\omega_{M+r,N,t_1},\dots,\omega_{M+r,N,t_k}).
	\end{equation}
	Indeed, by construction (see \autoref{sec:finitevortices}), for all $s<t$, $\omega_{M,N,t}$ is given
	as a measurable function of $\omega_{M,N,s}$ and the driving noise,
	\begin{equation}\label{markovproperty}
	\omega_{M,N,t}=F_{s,t}(\omega_{M,N,s},\Sigma^{N\lambda}\mid_{[s,t]})
	\end{equation}
	this, combined with the fact that $\omega_{M,N,t}\sim \omega_{M+t,N,0}$ and the invariance of $\Sigma^{N\lambda}$
	by time shifts proves (\ref{omogeneity}).
	Passing (\ref{omogeneity}) to the limits (Ps) and (Gs) concludes the proof, since the dependence on $r$ of the 
	right-hand side disappears.
\end{proof}

\begin{rmk}
	Equation (\ref{markovproperty}) is equivalent to the Markov property, \emph{cf.} the beginning of the proof
	to \autoref{cor:finitevortices}. Equation (\ref{omogeneity}) is the time omogeneity property.
	The Markov property is a consequence of uniqueness for the system (\ref{posdyn}), (\ref{intdyn}).
	Since uniqueness result in cases (Ps), (G) and (Gs) of \autoref{defi:modelsolution} seem to be out of reach
	by now, we can not hope to derive the Markov property as well.
\end{rmk}

We are only left to show that our limits do produce the sought solutions of \autoref{thm:mainresult}.
First, we apply Skorokhod's theorem to obtain almost sure convergence. 

\begin{prop}\label{prop:skorokhod}
	There exist stochastic processes $(\tilde \omega^P_n)_{n\in \N},\tilde \Sigma_t^\lambda$,
	defined on a probability space $(\tilde \Omega,\tilde\F,\tilde\PP)$, such that their joint distribution
	coincides with the one of the original objects and with $\tilde \omega^P_m$ converging to 
	a limit $\tilde \omega^P$ almost surely in $L^q([0,T],H^{-1-\delta})\cap \D([0,T],H^{-3-\delta})$
	for any fixed $\delta>0, 1\leq q<\infty$.
	
	Analogously, there exist $(\tilde \omega^G_{M,n},\tilde \omega^G_n,\tilde \Sigma_t^{N_n\lambda})_{n\in \N}$,
	defined on $(\tilde \Omega,\tilde\F,\tilde\PP)$, such that their joint distribution
	coincides with the one of the original objects and with $\tilde \omega^G_{M,n},\tilde \omega^G_n$ 
	converging respectively to limits $\tilde \omega^G_M, \tilde \omega^G$ almost surely in 
	$L^q([0,T],H^{-1-\delta})\cap \D([0,T],H^{-3-\delta})$ for any fixed $\delta>0, 1\leq q<\infty$.
\end{prop}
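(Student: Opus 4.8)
The plan is to deduce almost sure convergence from the convergence in law established in \autoref{prop:tightness} and \autoref{prop:martclt} by a direct application of Skorokhod's representation theorem; the only genuine work is to set up the correct Polish space and to verify that the \emph{joint} laws of the approximating processes \emph{together with their driving noises} are tight. Write $E_\delta^q=L^q([0,T],H^{-1-\delta})\cap \D([0,T],H^{-3-\delta})$ for the path space of \autoref{prop:tightness}, and recall that $L^q([0,T],H^{-1-\delta})$ is Polish and that $\D([0,T],H^{-3-\delta})$ with the Skorokhod topology is Polish (as $H^{-3-\delta}$ is a separable Banach space); hence $E_\delta^q$, with the topology induced by the two metrics, is again separable and completely metrizable. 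The same holds for the space $L^q([0,T],H^{-1-\delta})\cap \D([0,T],H^{-1-\delta})$ in which the rescaled noises of \autoref{prop:martclt} live.

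First I would assemble joint tightness. By \autoref{prop:tightness} the laws of the processes $\omega_{M,N}$ are tight on $E_\delta^q$, while by \autoref{prop:martclt} the laws of the rescaled noises $\Pi^{N}=N^{-1/2}\Sigma^{N\lambda}$ are tight on their path space. Since a finite product of tight families is tight (a product of the associated compact sets does the job), the joint laws of the relevant tuples are tight: the pair $(\omega_{M_n,1},\Sigma^\lambda)$ in case (Ps), and the triple $(\omega_{M,N_n},\omega_{M_n,N_n},\Pi^{N_n})$ in the Gaussian cases (Skorokhod being applied simultaneously to both Gaussian sequences so that they are realised on one space). By Prokhorov's theorem I can extract a single subsequence along which all these joint laws converge weakly, the limiting marginals being identified by \autoref{prop:fixedtimemarginals} and \autoref{prop:martclt}.

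Then I would invoke Skorokhod's representation theorem for the weakly convergent sequence of joint laws, obtaining on a common probability space $(\tilde\Omega,\tilde\F,\tilde\PP)$ copies $(\tilde\omega^P_n,\tilde\Sigma^\lambda)$, respectively $(\tilde\omega^G_{M,n},\tilde\omega^G_n,\tilde\Sigma^{N_n\lambda})$, whose law coincides for each $n$ with that of the corresponding original tuple, and which converge $\tilde\PP$-almost surely to limits $\tilde\omega^P$, respectively $\tilde\omega^G_M$ and $\tilde\omega^G$. To secure the claimed convergence \emph{for every} $\delta>0$ and $1\le q<\infty$ on a single space, I would carry the construction out in the countable-intersection space $\bigcap_m E_{\delta_m}^{q_m}$ over $\delta_m\downarrow 0$ and $q_m\uparrow\infty$, which is still Polish (metrized by $\sum_m 2^{-m}d_m/(1+d_m)$) and in which tightness holds since it holds in each factor; almost sure convergence there entails convergence in every $E_\delta^q$ at once.

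The point that deserves care, rather than any single estimate, is that the driving noise must be carried \emph{inside} the random variable to which Skorokhod is applied. Each approximant satisfies the relation (\ref{weakmodel}) (by \autoref{cor:finitevortices}), which for the Poissonian driving noise is a pathwise identity between measurable functionals of the pair (process, noise); transferring that joint law to the Skorokhod copies is exactly what guarantees that each $\tilde\omega^P_n$ (resp. $\tilde\omega^G_{M,n}$, $\tilde\omega^G_n$) is still a solution driven by its copied noise, which is what the identification of the limit in \autoref{ssec:limitpoints} will use. In case (Ps) the noise marginal is the fixed law of $\Sigma^\lambda$ for every $n$, so the copies share this law and their almost sure limit $\tilde\Sigma^\lambda$ is again distributed as $\Sigma^\lambda$ (the single symbol $\tilde\Sigma^\lambda$ in the statement being understood in this sense); in the Gaussian cases the copied noises $\tilde\Sigma^{N_n\lambda}$, suitably rescaled, converge almost surely to a copy of $\sqrt\lambda W$ by \autoref{prop:martclt}. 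The mismatch of the spatial indices ($H^{-1-\delta}$ for the $L^q$ part versus $H^{-3-\delta}$ for the cadlag part) is harmless here, since both are Polish and tightness was proved in each.
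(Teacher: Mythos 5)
Your overall strategy is the paper's: tightness of the process--noise pairs (from \autoref{prop:tightness} and \autoref{prop:martclt}) followed by a Skorokhod representation applied \emph{jointly} to the processes and their driving noises. Two of your additions are sound and in fact more careful than what the paper writes down: the explicit joint-tightness/Prokhorov extraction (joint convergence in law does not follow from marginal convergence, and the paper leaves this hypothesis implicit when it applies its representation theorem), and the countable intersection of the path spaces $L^{q_m}([0,T],H^{-1-\delta_m})\cap \D([0,T],H^{-3-\delta_m})$, which legitimately produces one probability space on which convergence holds for every $\delta>0$ and $1\leq q<\infty$ simultaneously.

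There is, however, one concrete gap, which you half-acknowledge yourself. In case (Ps) the classical Skorokhod theorem applied to the pairs $(\omega^P_n,\Sigma^\lambda)$ only yields copies $(\tilde\omega^P_n,\tilde\Sigma^{\lambda,n})$ with an $n$-dependent noise converging almost surely; it does not produce the single process $\tilde\Sigma^\lambda_t$ asserted in the statement, i.e.\ one noise such that $(\tilde\omega^P_n,\tilde\Sigma^\lambda)$ has the law of $(\omega^P_n,\Sigma^\lambda)$ for \emph{every} $n$. Saying that ``the single symbol $\tilde\Sigma^\lambda$ is understood in this sense'' reinterprets the claim rather than proving it. The paper closes exactly this hole by invoking a refined representation theorem (\autoref{thm:skorokhod}, borrowed from \cite{motyl}): when the first coordinate $\chi_n^1$ has a law independent of $n$, the representatives can be chosen so that $\tilde\chi_n^1=\tilde\chi_1$ almost surely for all $n$ --- precisely the conclusion your argument cannot reach. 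Your proof becomes a proof of the stated proposition once you substitute this variant for the classical theorem in case (Ps); cases (G) and (Gs) are unaffected, since there the noise genuinely varies with $n$ and the paper too applies the theorem with trivial fixed coordinate $X_1=\set{0}$. Note also that your weaker version is not cost-free downstream: the limiting Poissonian noise is discontinuous, so almost sure convergence $\tilde\Sigma^{\lambda,n}\to\tilde\Sigma^\lambda$ in $\D([0,T],H^{-1-\delta})$ gives convergence of $\brak{f,\tilde\Sigma^{\lambda,n}_t}$ only at times $t$ which are almost surely not jump times of the limit --- a fixable but real extra step in the identification of the limit equation, which the paper's route avoids entirely because its noise term never changes with $n$.
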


The proof is a straightforward application of the following version of Skorokhod's theorem, which we borrow from
\cite{motyl} (see references therein).
The required tightness is provided by \autoref{prop:tightness} and \autoref{prop:martclt}.

\begin{thm}[Skorokhod Representation]\label{thm:skorokhod}
	Let $X_1\times X_2$ be the product of two Polish spaces, $\chi^n=(\chi_n^1,\chi_n^2)$ be a sequence of $X_1\times X_2$-valued
	random variables, defined on a probability space $(\Omega,\F,\PP)$, converging in law and such that $\chi_n^1$ have all the same
	law $\rho$. Then there exist a sequence $\tilde \chi^n=(\tilde \chi_n^1,\tilde \chi_n^2)$ of $X_1\times X_2$-valued
	random variables, defined on a probability space $(\tilde \Omega,\tilde\F,\tilde\PP)$, such that
	\begin{itemize}
		\item $\chi^n$ and $\tilde \chi^n$ have the same law for all $n$;
		\item $\tilde \chi^n$ converge almost surely to a $X_1\times X_2$-valued random variable
		$\tilde \chi=(\tilde\chi^n_1,\tilde \chi^n_2)$ on $(\tilde \Omega,\tilde\F,\tilde\PP)$;
		\item the variable $\tilde \chi^n_1$ and $\tilde \chi_1$ coincide almost surely.
	\end{itemize}
\end{thm}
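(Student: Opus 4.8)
The plan is to deduce this strengthened representation theorem from the classical Skorokhod--Dudley theorem, with the only genuinely new ingredient being a device to freeze the first coordinate across the whole sequence. Since $X_1$ and $X_2$ are Polish, so is $X_1\times X_2$, and by hypothesis the laws $\mu_n=\mathrm{Law}(\chi^n)$ converge weakly to some $\mu$ on $X_1\times X_2$, whose first marginal is $\rho$ (as is that of every $\mu_n$). The classical representation theorem, applied to the converging sequence $\mu_n\Rightarrow\mu$, already yields representatives $\hat\chi^n=(\hat\chi^1_n,\hat\chi^2_n)$ on $([0,1],\mathcal B,\mathrm{Leb})$ with $\hat\chi^n\sim\mu_n$ and $\hat\chi^n\to\hat\chi\sim\mu$ almost surely; in particular $\hat\chi^1_n\to\hat\chi_1$ a.s.\ with all $\hat\chi^1_n$ of law $\rho$. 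The whole difficulty is to upgrade this to a representation whose first coordinate literally equals the limit for every $n$.

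To that end I would disintegrate each joint law over the common first marginal: because the spaces are standard Borel there are probability kernels $x\mapsto\pi_n(x,\cdot)$ on $X_2$ with $\mu_n(dx\,dy)=\rho(dx)\,\pi_n(x,dy)$, and likewise $\mu=\rho\otimes\pi$. On the canonical space $X_1\times[0,1]$ carrying $\rho\otimes\mathrm{Leb}$ I would set $\tilde\chi_1(x,u):=x$ once and for all, so that it has law $\rho$ and is the \emph{same} variable for every $n$, forcing $\tilde\chi^1_n=\tilde\chi_1$, and realise the second coordinate as $\tilde\chi^2_n(x,u):=T_n(x,u)$ where $T_n(x,\cdot)$ pushes $\mathrm{Leb}$ forward to $\pi_n(x,\cdot)$. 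Such a jointly measurable parametrised inverse-quantile representation exists after a Borel isomorphism of $X_2$ with a subset of $\R$ chosen measurably in $x$. By construction $(\tilde\chi_1,\tilde\chi^2_n)\sim\mu_n$ for every $n$ and the first coordinate is frozen, so all that remains is the convergence of the second coordinate.

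The crux, and what I expect to be the main obstacle, is precisely to secure the almost sure convergence $\tilde\chi^2_n\to\tilde\chi_2$ while the first coordinate is held fixed, that is, to choose the maps $T_n$ coherently so that $T_n(x,u)\to T(x,u)$ for $(\rho\otimes\mathrm{Leb})$-a.e.\ $(x,u)$. This step is delicate because weak convergence of the joint laws with a frozen first marginal does \emph{not} by itself force the conditional kernels $\pi_n(x,\cdot)$ to converge for $\rho$-a.e.\ $x$, so one cannot merely quote a pointwise-in-$x$ Skorokhod statement; the convergence must instead be driven by the already constructed classical representation $\hat\chi^n\to\hat\chi$ and transported onto the canonical space through a single common randomisation tied to the limit, rather than kernel by kernel, so that the a.s.\ convergence is inherited. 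Reconciling the fixed first marginal with genuine almost sure (not merely in-law) convergence of the second component is the heart of the matter, and it is exactly the point that the precise framework of \cite{motyl} is set up to control.
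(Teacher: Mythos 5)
You have not given a proof: the construction you outline (disintegration $\mu_n=\rho\otimes\pi_n$ plus parametrised transport maps $T_n(x,\cdot)$ pushing Lebesgue measure onto the kernels) stops exactly where the theorem has content, namely the almost sure convergence of the second coordinate once the first one is frozen, and your final paragraph explicitly hands that step back to ``the precise framework of \cite{motyl}''. Everything before that point is the classical Skorokhod--Dudley theorem plus bookkeeping, which you yourself observe is insufficient. For what it is worth, the paper does not prove this statement either --- it imports it from \cite{motyl} and only applies it --- so there is no internal argument to compare against; but a blind attempt whose crux is deferred to the same citation establishes nothing.

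The more serious point is that the gap cannot be closed along the lines you suggest, nor from the stated hypotheses at all, so the obstacle you sense is real and fatal rather than technical. As you note, almost sure convergence with frozen first coordinate forces, conditionally, $\pi_n(x,\cdot)\Rightarrow\pi(x,\cdot)$ for $\rho$-a.e.\ $x$, and this does not follow from weak convergence of the joint laws with fixed first marginal. Concretely, take $X_1=X_2=\T=\R/\Z$, let $U$ be uniform and $\chi^n=(U,\,2^nU\bmod 1)$. All first components have the same law (they are even the same random variable, as in the paper's application), and the joint laws $\mu_n$ converge weakly to the uniform measure on $\T\times\T$, since $\expt{e^{2\pi i(jU+k2^nU)}}=0$ for all large $n$ whenever $(j,k)\neq(0,0)$. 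Yet $\mu_n$ is carried by the graph of $x\mapsto 2^nx$, so any representation satisfying the third bullet, $\tilde\chi^n_1=\tilde\chi_1$, forces $\tilde\chi^n_2=2^n\tilde\chi_1\bmod 1$ almost surely; this sequence converges only if $\tilde\chi_1$ is a dyadic rational (if $2^nx\to y$ in $\T$ then by continuity of doubling $y=2y$, so $y=0$ and the binary expansion of $x$ is eventually constant), an event of probability zero. Hence the ``single common randomisation tied to the limit'' you hope to extract from the classical representation does not exist in general: equality of the first marginals is not a sufficient hypothesis, the missing ingredient being precisely the $\rho$-a.e.\ weak convergence of the disintegrated kernels (or whatever stronger structure the formulation in \cite{motyl} imposes), and any correct proof must begin by identifying and using it.
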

\begin{proof}[Proof of \autoref{prop:skorokhod}.]
	In the case (P) we apply the above result with $X_1=X_2=X=L^q([0,T],H^{-1-\delta})\cap \D([0,T],H^{-3-\delta})$
	and $\chi^m_1=\Sigma_t^\lambda,\chi^m_2=\omega^P_m$, while for the case (G) we take $X_1=\set{0}$ and $X_2=X\times X$,
	with $\chi^n_2=(\omega^G_n,\Sigma_t^{N_n\lambda})$.
\end{proof}

The new processes still are weak solutions of (\ref{eqomegamn}) in the sense of \autoref{defi:modelsolution}.
Consider for instance the $\tilde\omega^G_n$ (the other case being identical): clearly
their trajectories have the same regularity as $\omega^G_n$, and they have the same fixed time distributions.
As for the equation, it holds, for any $f\in C^\infty(\T^2)$ and $t\in[0,T]$, $\PP$-almost surely,
\begin{equation*}
\brak{f,\tilde\omega^G_{n,t}}-\brak{f,\tilde\omega^G_{n,0}}+\theta \int_0^t \brak{f,\tilde\omega^G_{n,s}} ds
-\int_0^t :\brak{H_f,\tilde\omega^G_{n,s}\otimes\tilde\omega^G_{n,s}}: ds -\brak{f,\frac{1}{\sqrt N_n}\Sigma^{N_n\lambda}_t}=0,
\end{equation*}
since taking the expectation of the absolute value (capped by 1) of the right-hand side gives a functional
of the law of $\tilde \omega^G_n, \tilde \Sigma_t^{N_n\lambda}$, which is the same of the original ones.
Moreover, since all the terms in the last equation are \emph{cadlag}
functions in time (in fact they are all continuous but the noise term), one can choose the $\tilde \PP$-full set
on which the equation holds uniformly in $t\in [0,T]$. 

\begin{rmk}
	In fact, one can prove more. Following the proof of Lemma 28 in \cite{flandoli}, it is possible to show that the
	new Skorokhod process have in fact the same point vortices structure of $\omega_{M,N}$, namely it is possible
	to represent $\tilde \omega^P_{m,t}$ and $\tilde \omega^G_{M,n,t},\tilde \omega^G_{n,t}$ as sums of vortices satisfying equations
	(\ref{posdyn}) and (\ref{intdyn}) of \autoref{sec:finitevortices}. The argument would be quite long,
	and we feel that it would not add much to our discussion, so we refrain to go into details,
	contenting us with our analytically weak notion of solution.
\end{rmk}

To ease notation, from now on we will drop all tilde symbols, implying that we are going to work only with the new processes
and noise terms.
We are finally ready to pass to the limit the stochastic equations satisfied by our approximating processes,
thus concluding the proof of our main result.

\begin{proof}[Proof of \autoref{thm:mainresult}.]
	The limits of $\omega^P_n$, $\omega^G_{M,n}$ and $\omega^G_n$ provide respectively the sought solutions in the cases (Ps), (G)
	and (Gs) of \autoref{defi:modelsolution}.
	We focus again our attention on $\omega^G_n$, case (Gs), the other ones being analogous.
	
	Since $\omega^G_n$ converges almost surely in the spaces (\ref{thespace}), we immediately deduce that,
	for any $f\in C^\infty(\T^2)$ and $t\in[0,T]$, $\PP$-almost surely,
	\begin{align}
	\brak{f,\omega^G_{n,t}}&\rightarrow\brak{f,\omega^G_{t}},\\
	\int_0^t \brak{f,\omega^G_{n,s}} ds &\rightarrow \int_0^t \brak{f,\omega^G_{s}} ds. 	
	\end{align}
	The nonlinear term is only slightly more difficult. Let $H_k\in  C^\infty(\T^{2\times 2})$, $k\in\N$, be symmetric
	functions vanishing on the diagonal converging to $H_f$ as $k\rightarrow\infty$
	(it is yet another equivalent of the approximation procedure (\ref{doubleintegralsimplefunctions})).
	Then
	\begin{equation*}
	:\brak{H_k,\omega^G_{n,t}\otimes \omega^G_{n,t}}:=\brak{H_k,\omega^G_{n,t}\otimes \omega^G_{n,t}}
	\rightarrow \brak{H_k,\omega^G_{t}\otimes \omega^G_{t}}=:\brak{H_k,\omega^G_{t}\otimes \omega^G_{t}}:
	\end{equation*} 
	in $L^2(\Omega\times[0,T])$ (the last passage is due to (\ref{doubleintegraldiagonalcontribution})).
	Almost sure convergence of the noise terms is ensured by \autoref{prop:skorokhod}, and the limiting law
	has been determined in \autoref{prop:martclt}, hence, summing up, it holds $\PP$-almost surely
	\begin{equation*}
	\brak{f,\omega^G_{t}}-\brak{f,\omega^G_{0}}+\theta \int_0^t \brak{f,\omega^G_{s}} ds
	-\int_0^t :\brak{H_f,\omega^G_{s}\otimes\omega^G_{s}}: ds -\brak{f,\sqrt \lambda W_t}=0.
	\end{equation*}
	As already noted above, quantifiers in $\PP$ and $t\in[0,T]$ can be exchanged thanks to the fact that
	we are dealing with \emph{cadlag} processes in time.
	Stationarity of $\omega^P_t$ and $\omega^G_t$ follows from \autoref{prop:stationarity}. This concludes the proof of \autoref{thm:mainresult}.
\end{proof}

\renewcommand{\abstractname}{Acknowledgements}
\begin{abstract}
	This article was completed while the author was a Ph.D. student at Scuola Normale Superiore,
	under the supervision of Franco Flandoli, whom the former wishes to thank for many of the ideas
	here exposed.	 
\end{abstract}

%%%%%%%%%%%%%%%%%%%%%%%%%%%%%%%%%%%%%%%%%%%%%%%%%%%%%%%%%%%%%%%%%%%%%%%%%%%%%%%%%%%

\bibliography{grotto}{}
\bibliographystyle{plain}

\end{document}